\theoremstyle{plain}
\newtheorem{thm}{Theorem}[section]
\newtheorem{lem}[thm]{Lemma}
\newtheorem{prop}[thm]{Proposition}
\newtheorem{cor}[thm]{Corollary}
\newtheorem{fact}[thm]{Fact}
\newtheorem*{thmA}{Theorem A}
\newtheorem*{thmB}{Theorem B}
\newtheorem*{thmC}{Theorem C}
\newtheorem*{corE}{Corollary E}
\newtheorem*{quesF}{Question F}
\newtheorem*{quesD}{Question D}
\newtheorem*{corG}{Corollary G}
\newtheorem*{thmH}{Theorem H}
\theoremstyle{definition}
\theoremstyle{remark}
\newtheorem{rem}[thm]{Remark}
\numberwithin{equation}{section}
\newcommand{\iid}{\mathrm{id}}
\newcommand{\image}{\mathrm{im}}
\newcommand{\boh}{\mathbf{h}}
\newcommand{\gog}{\mathscr{G}}
\newcommand{\dbl}{[\![}
\newcommand{\dbr}{]\!]}
\newcommand{\Z}{\mathbb{Z}}
\newcommand{\R}{\mathbb{R}}
\newcommand{\C}{\mathbb{C}}
\newcommand{\Q}{\mathbb{Q}}
\newcommand{\euV}{\mathscr{V}}
\newcommand{\euE}{\mathscr{E}}
\newcommand{\euT}{\mathscr{T}}
\newcommand{\euH}{\mathscr{H}}
\newcommand{\eue}{\mathbf{e}}
\newcommand{\euf}{\mathbf{f}}
\newcommand{\dH}{\mathrm{dH}}
\newcommand{\dis}{\mathbf{dis}}
\newcommand{\QGdis}{{}_{\QG}\dis}
\newcommand{\QGmod}{{}_{\QG}\mathbf{mod}}
\newcommand{\Qvect}{{}_{\Q}\mathbf{vect}}
\newcommand{\nor}{|\! |}
\newcommand{\dd}{\mathrm{d}}
\newcommand{\eps}{\varepsilon}
\newcommand{\QG}{\Q[G]}
\newcommand{\CO}{\mathcal{CO}}
\newcommand{\caO}{\mathcal{O}}
\newcommand{\Bi}{\mathrm{Bi}}
\newcommand{\argu}{\hbox to 1.5ex{\hrulefill}}  
\newcommand{\FP}{\mathrm{FP}}
\newcommand{\ccd}{\mathrm{cd}}
\newcommand{\FF}{\mathrm{F}}
\newcommand{\ce}{:=}
\def\Q{{\mathbb{Q}}}
\def\Z{{\mathbb{Z}}}
\def\R{{\mathbb{R}}}
\def\C{{\mathbb{C}}}
\theoremstyle{definition}
\newtheorem{defn}[thm]{Definition}
\newtheorem{ex}[thm]{Example}
\begin{document}
\begin{abstract} 
It is shown that a Stallings--Swan theorem holds in a
totally disconnected locally compact (= t.d.l.c.) context (cf.~Thm.~B). More precisely, a compactly generated 
$\CO$-bounded t.d.l.c.~group $G$ of rational discrete cohomological
dimension less than or equal to $1$ 
must be isomorphic to the fundamental group of a finite graph of
profinite groups. This result generalises Dunwoody's rational version of the classical Stallings--Swan theorem to t.d.l.c.~groups. The proof of Theorem~B 
is based on the fact that a compactly generated unimodular t.d.l.c.~group
with rational discrete cohomological dimension $1$ has necessarily non-positive Euler--Poincaré characteristic
(cf.~Thm.~H).
\end{abstract}
\date{\today}
\subjclass[2010]{22D05, 20J05, 20J06}
\keywords{locally compact groups, cohomological dimension, accessibility, graph of groups, Euler--Poincaré characteristic}

\title[Unimodular t.d.l.c.~groups of cohomological dimension one]{Unimodular totally disconnected\\ locally compact groups of rational discrete cohomological dimension one}
\author{Ilaria Castellano}
\address{Heinrich Heine Universit\"at D\"usseldorf,
Mathematisch-Naturwissenschaftliche Fakult\"at, Universit\"atsstra\ss{}e 1, 40225 D\"usseldorf, Germany}
\email{ilaria.castellano@hhu.de}
\author{Bianca Marchionna}
\address{Heidelberg University \\
Faculty of Mathematics and Computer Science, Im Neuenheimer Feld 205, 69120 Heidelberg, Germany}
\email{bmarchionna@mathi.uni-heidelberg.de}
\author{Thomas Weigel}
\address{Dipartimento di Matematica e Applicazioni, Universit\`a degli Studi di
Milano-Bicocca, Via Roberto Cozzi no. 55, I-20125 Milano, Italy}
\email{thomas.weigel@unimib.it}
\maketitle
\section{Introduction}
\label{s:intro} 
The classical Stallings--Swan theorem states that a (discrete) group $G$ is free if, and only if, $\ccd_{\Z}(G)\leq 1$. {It was firstly proved by J.~R.~Stallings~\cite{stall:end} for finitely generated groups, and later shown to hold without any additional hypothesis
by R.~G.~Swan~\cite{swan:cone}, who actually proved that every torsion-free group with cohomological dimension at most one over a non-trivial ring with unit is free. 
The Stallings--Swan theorem is considered as a milestone in the cohomology theory of discrete groups. 
Some time later, M.~Dunwoody~\cite{dunw:access} provided the following further extension.}
\begin{thmA}[M.~Dunwoody, 1979]\label{thmA}
Let $G$ be a group and $R$ be a commutative ring with $1\neq0$. Then
the following are equivalent:
\begin{itemize}
\item[(i)] $\ccd_R(G)\leq 1$;
\item[(ii)] there exists a connected graph $\Lambda$ and a graph of finite
groups $(\gog,\Lambda)$ based on $\Lambda$ such that $|\gog_v|$ is invertible in $R$ for all $v\in\euV(\Lambda)$
and
$G\simeq\pi_1(\gog,\Lambda)$. 
\end{itemize}
\end{thmA}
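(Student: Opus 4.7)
I would prove the two implications separately. The reverse direction $(ii)\Rightarrow(i)$ is a direct Bass--Serre computation: passing to the Bass--Serre tree $T$ of $(\gog,\Lambda)$ yields the short exact sequence
\[
0\longrightarrow R[E(T)]\longrightarrow R[V(T)]\longrightarrow R\longrightarrow 0
\]
of $R[G]$-modules coming from contractibility. Decomposing the middle terms by $G$-orbits produces summands of the form $R[G/\gog_v]$ and $R[G/\gog_e]$. Because $|\gog_v|$ is invertible in $R$, the element $|\gog_v|^{-1}\sum_{g\in\gog_v}g$ is an idempotent of $R[\gog_v]$, which realises $R[G/\gog_v]$ as a direct summand of $R[G]$ and hence as a projective $R[G]$-module; the same holds for edge stabilizers, whose orders divide those of the adjacent vertex stabilizers. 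The displayed sequence is therefore a length-one projective resolution of $R$, yielding $\ccd_R(G)\leq 1$.

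For the main direction $(i)\Rightarrow(ii)$, the plan is to build an action of $G$ on a tree with finite stabilizers of invertible order and then invoke Bass--Serre theory in reverse. First, I would reduce to the finitely generated case: $G$ is the directed union of its finitely generated subgroups, each inheriting $\ccd_R\leq 1$, and the resulting graphs of groups can be assembled into a (possibly infinite) graph of groups for $G$ via a directed colimit argument. In the finitely generated infinite case, the cohomological hypothesis forces $G$ to have more than one end over $R$, so that $H^1(G,R[G])\neq 0$, and Dunwoody's track method, applied to a finite $2$-complex with fundamental group $G$, extracts an essential track whose cut produces a nontrivial splitting of $G$ as $A\ast_C B$ or as an HNN-extension $A\ast_C$ over a finite subgroup $C$. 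The Mayer--Vietoris sequence associated with the splitting forces $|C|$ to be invertible in $R$, for otherwise the cohomological dimension of $G$ would exceed $1$. The vertex groups $A$ and $B$ inherit $\ccd_R\leq 1$, so the procedure iterates.

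The main obstacle is to guarantee that this iteration terminates in a \emph{finite} graph of groups, which is precisely the content of Dunwoody's accessibility theorem. Its proof bounds the number of pairwise disjoint, pairwise non-parallel essential tracks that can be drawn in a chosen finite $2$-complex with fundamental group $G$. Once accessibility is in hand, any terminal vertex group $H$ admits no further nontrivial splitting over a finite subgroup of invertible order; combined with $\ccd_R(H)\leq 1$, this forces $H$ to be finite with $|H|$ invertible in $R$, for an infinite such $H$ would split by the preceding step, while a finite group satisfies $\ccd_R\leq 1$ only when its order is invertible in $R$. This produces the required finite graph of finite groups of invertible order, closing the proof.
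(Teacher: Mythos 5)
The paper does not prove Theorem~A at all: it is quoted as background from Dunwoody's 1979 paper \cite{dunw:access}, so there is no internal proof to compare against. Judged on its own terms, your sketch of $(ii)\Rightarrow(i)$ is correct and standard (the averaging idempotent $|\gog_v|^{-1}\sum_{h\in\gog_v}h$ splits $R[G]\twoheadrightarrow R[G/\gog_v]$, edge orders divide vertex orders, and the edge--vertex sequence of the Bass--Serre tree is then a length-one projective resolution of $R$).

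The genuine gap is in the accessibility step of $(i)\Rightarrow(ii)$. You propose to terminate the iterated splittings by ``Dunwoody's track method, applied to a finite $2$-complex with fundamental group $G$.'' That method requires $G$ to be \emph{finitely presented}: without finite presentability there is no finite $2$-complex realising $G$, and the bound on pairwise non-parallel tracks lives in such a complex. After your reduction you only have a finitely generated group with $\ccd_R(G)\leq 1$; its finite presentability is a \emph{consequence} of the theorem (a finitely generated fundamental group of a graph of finite groups sits on a finite graph), so invoking it here is circular. Moreover, general finitely generated groups are not accessible (Dunwoody's later inaccessible example), so the termination must genuinely use the hypothesis $\ccd_R(G)\leq 1$; the track argument, which is from Dunwoody's 1985 paper on finitely presented groups, does not see that hypothesis. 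Dunwoody's actual 1979 argument is algebraic: it exploits the projectivity of the augmentation ideal and a rank/decomposition bound for projective $R[G]$-modules (a line of reasoning close in spirit to the Hattori--Stallings/trace estimates that the present paper adapts in Section~4 to prove Theorem~G). Note also that you cannot substitute Linnell's accessibility criterion, since over a general $R$ (e.g.\ $R=\Q$) the finite subgroups of $G$ need not have bounded order. Secondary, smaller points: the reduction from arbitrary $G$ to its finitely generated subgroups via a ``directed colimit of graphs of groups'' is the Swan-type step and deserves more than one sentence; and the invertibility of $|C|$ follows most cleanly from $\ccd_R(C)\leq\ccd_R(G)\leq 1$ together with the fact that a nontrivial finite group has $\ccd_R$ equal to $0$ or $\infty$, rather than from a Mayer--Vietoris computation.
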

The main goal of this note is to establish an analogue of Theorem~\hyperref[thmA]{A} for
t.d.l.c.~groups. Unfortunately, we were not able to prove it in full generality for this class of groups, but we had to assume further finiteness conditions.

The first finiteness condition on the t.d.l.c.~group we require is compact generation. Recall that a t.d.l.c.~group $G$ is said to be \emph{compactly generated} if there exists a compact set $\Sigma\subseteq G$
such that every element of $G$ can be written as a finite product of elements of $\Sigma$. E.g., a discrete group $G$
is compactly generated if, and only if, it is finitely generated. By van Dantzig's theorem (cf.~\cite{vD:CO}),
the set $\CO(G)\ce\{\caO\mid \caO \text{ compact open subgroup of }G\}$ is non-empty, 
and it is well known that every group $\caO\in\CO(G)$ is profinite
(cf.~\cite[\S I.1, Proposition~0]{ser:gal}). Note that $G$ is compactly generated if, and only if, for every $\caO\in\CO(G)$ there exists a finite set $S\subseteq G\setminus \caO$ such that $\caO\sqcup S$ generates $G$.

The second finiteness condition we consider is that the t.d.l.c.~group $G$ is \emph{$\CO$-bounded} (cf.~Section~\ref{sus:CObound}). Namely, for a fixed  left-invariant Haar measure $\mu$ on $G$ one requires that there exists a positive integer $c\in\Z_{\geq 1}$ 
such that every compact open subgroup $\caO$ of $G$ satisfies $\mu(\caO)\leq c$.
Every $\CO$-bounded t.d.l.c.~group must be unimodular (cf.~Proposition~\ref{prop:COunimod}).

In Section~\ref{s:acc} we show that the following analogue of Theorem~\hyperref[thmA]{A} holds for t.d.l.c.~groups.
\begin{thmB}[\protect{cf.~Theorem~\ref{thm: firstRes}~and~Corollary~\ref{cor:acc}}]\label{thmB}
Let $G$ be a compactly generated t.d.l.c.~group.
Then the following are equivalent:
\begin{itemize}
  \item[(i)] $\ccd_\Q(G)\leq 1$ and $G$ is $\CO$-bounded;
  \item[(ii)] $G$ is unimodular and there exists a finite connected graph $\Lambda$ and a graph of profinite groups $(\gog, \Lambda)$
such that $G\simeq \pi_1(\gog,\Lambda)$.
\end{itemize}
Here $\ccd_\Q(\argu)$ denotes the rational discrete cohomological dimension for a t.d.l.c.~group as introduced in \cite{cw:qrat}.
\end{thmB}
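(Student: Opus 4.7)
\emph{Plan for (ii) $\Rightarrow$ (i).} Given $G \simeq \pi_1(\gog,\Lambda)$ with $\Lambda$ finite and every $\gog_v$ profinite, $G$ acts on the associated Bass--Serre tree $T$ with compact open stabilizers and compact quotient $\Lambda$. Unimodularity is preserved under the relevant amalgam/HNN constructions over compact open subgroups, and the uniform bound on the Haar measures of the finitely many $\gog_v$ propagates to all conjugate compact open subgroups, yielding $\CO$-boundedness. For the cohomological bound, the $G$-equivariant cellular chain complex of $T$ provides a length-one resolution
\begin{equation*}
0 \longrightarrow \bigoplus_{e\in\euE(\Lambda)} \Bi[G/\gog_e] \longrightarrow \bigoplus_{v\in\euV(\Lambda)} \Bi[G/\gog_v] \longrightarrow \Q \longrightarrow 0
\end{equation*}
of the trivial rational discrete $G$-module $\Q$ by modules induced from the profinite subgroups $\gog_v,\gog_e$; since profinite groups have $\ccd_\Q=0$, such induced modules are acyclic in positive degree, whence $\ccd_\Q(G)\leq 1$.

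\emph{Plan for (i) $\Rightarrow$ (ii).} Unimodularity is supplied by Proposition~\ref{prop:COunimod}. If $G\in\CO(G)$, take $\Lambda$ a single vertex; otherwise, the task is to build a non-trivial tree action. The hypothesis $\ccd_\Q(G)\leq 1$, together with compact generation, should force a non-trivial rational $1$-cocycle on a coset graph $G/\caO$ which, by a Stallings-type end argument adapted to the t.d.l.c.\ setting, yields an action of $G$ on a tree with compact open edge stabilizers and without global fixed point. Theorem~G enters here: it guarantees $\chi(G)\leq 0$, which both rules out degenerate obstructions to the splitting and provides the strict numerical invariant used in the termination argument below.

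\emph{Main obstacle: accessibility.} The decisive step is to show that the iterated refinement of splittings terminates in a \emph{finite} graph $\Lambda$. The plan is to exploit $\CO$-boundedness to ensure that vertex stabilizers remain compact open with uniformly bounded measure under refinement, and to combine this with the additivity of the rational Euler--Poincar\'e characteristic (furnished by Theorem~G) across splittings over profinite subgroups. A monotonicity argument in the spirit of Dunwoody's original accessibility theorem should then bound the number of admissible refinements. Once accessibility is in place, Bass--Serre theory for t.d.l.c.\ groups assembles the terminal splitting into the desired isomorphism $G\simeq\pi_1(\gog,\Lambda)$ with $\Lambda$ finite and every $\gog_v$ profinite.
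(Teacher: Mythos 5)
Your overall strategy coincides with the paper's: (ii)~$\Rightarrow$~(i) via the Bass--Serre tree (the paper phrases the cohomological bound as the Mayer--Vietoris sequence of Proposition~\ref{prop: MVSeq} and the $\CO$-bound via the $\underbar{E}_{\CO}(G)$-property of the universal tree, cf.~Example~\ref{ex: prelThm}(a)), and (i)~$\Rightarrow$~(ii) by reducing to accessibility and terminating the refinement process with the Euler--Poincar\'e characteristic, Theorem~G, and the uniform measure bound. However, the decisive step is exactly the one you leave as ``a monotonicity argument \dots should then bound the number of admissible refinements,'' and as stated your plan has two concrete gaps there. First, additivity of $\tilde{\chi}$ plus Theorem~G is not yet enough to produce a strictly negative per-edge contribution: for a \emph{compact} factor $H$ with edge group $U<H$ one needs the estimate $\tilde{\chi}_H=\mu_H\leq\tfrac12\mu_U$, and for a non-compact factor one needs $\tilde{\chi}_H\leq 0$; combined with $\mu_U\geq \|G\|_\mu^{-1}\cdot\mu$ (this is where $\CO$-boundedness enters) this gives $\tilde{\chi}_G\leq -\tfrac{|\euE(\euT_n)|}{4\|G\|_\mu}\cdot\mu$ and hence the bound on the number of edges. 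The inequality $\tilde{\chi}_H\leq\tfrac12\mu_U$ for \emph{every} proper compact open $U$ is Corollary~\ref{cor: 1stEstimateChar}, which rests on the operator-algebraic estimate $\tilde{\rho}(N_U^G)\geq\tfrac12\mu_U$ (Proposition~\ref{prop: Approx}); this is the technical heart of the argument and is not a routine consequence of Theorem~G.

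Second, your accounting implicitly assumes the refinements are amalgam splittings over a tree. HNN-refinements produce loops, and a non-proper HNN-splitting $X\ast^t_U$ with $X=U$ contributes nothing to the Euler-characteristic deficit, so $\tilde{\chi}$ alone cannot terminate that part of the process. The paper disposes of the $\beta$-decompositions separately: each HNN-refinement increases $\dim_\R\A(G)=\dim_\R\CHom(G,\R)$ by one (Proposition~\ref{prop: dimA(G)}(b)), and this dimension is finite for compactly generated $G$ (Lemma~\ref{lem: A(G)Hom+FG}), which reduces everything to totally $\alpha$-not-$\beta$ decomposable groups and to trees of groups (Corollary~\ref{cor:alfa not beta}, Proposition~\ref{prop: DecPropStar}); a further point, Corollary~\ref{cor: targ1}, is needed to guarantee that after contraction the edge numbers genuinely increase. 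Finally, a small attribution point: the trace/Euler-characteristic termination argument is Linnell's, not Dunwoody's; Dunwoody's accessibility proof proceeds differently.
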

Recall that a \emph{graph of t.d.l.c.~groups} $(\gog,\Lambda)$ consists of a connected graph~$\Lambda$, t.d.l.c.~groups $\gog_v$, $v\in\euV(\Lambda)$, and
$\gog_\eue$, $\eue\in\euE(\Lambda)$, and 
injective open continuous group homomorphisms $\alpha_\eue\colon\gog_\eue\to\gog_{t(\eue)}$ for all $\eue\in\euE(\Lambda)$. 
There is a canonical way to assign a t.d.l.c.~group topology to the Bass--Serre fundamental group of a graph of t.d.l.c.~groups (cf.~\S\ref{ss:gog}).
A compactly generated t.d.l.c.~group $G$ is said to be {\em accessible} if it is topologically isomorphic to the Bass--Serre fundamental group of a finite graph of t.d.l.c.~groups $(\gog,\Lambda)$ with compact edge-groups and whose vertex-groups have at most one end (cf.~for instance~\cite[\S 3.3]{km:rough}). In case $\ccd_\Q(G)\leq 1$ then $G$ is accessible if, and only if, it is topologically isomorphic to the fundamental group of a finite graph of profinite groups (cf.~Fact~\ref{fact:one end}).
It should be mentioned that Theorem~\hyperref[thmB]{B} is not the first result of this kind. Indeed, in \cite{IC:cone},
the first named author established an analogue of  Theorem~\hyperref[thmA]{A} based on an accessibility result 
of Y.~Cornulier (cf.~\cite[Theorem 4.H.1]{cor:qic}). The result can be summarised as follows.
\begin{thmC}[\protect{cf.~\cite[Theorem~B]{IC:cone}}]\label{thmC}
For a compactly generated t.d.l.c.~group $G$ the following are equivalent:
\begin{itemize}
\item[(i)] $\ccd_\Q(G)\leq 1$ and $G$ is compactly presented;
\item[(ii)] there exists a finite connected graph $\Lambda$ and a graph of profinite groups
$(\gog,\Lambda)$ such that $G\simeq\pi_1(\gog,\Lambda)$.
\end{itemize}
\end{thmC}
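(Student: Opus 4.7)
The strategy is to combine Bass--Serre theory for t.d.l.c. groups with Y.~Cornulier's accessibility theorem, using one implication in each direction.

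For $(ii)\Rightarrow(i)$, assume $G\simeq\pi_1(\gog,\Lambda)$ with $\Lambda$ a finite connected graph and $\gog$ a graph of profinite groups. The associated Bass--Serre tree $T$ carries a $G$-action whose vertex and edge stabilizers are (conjugates of) the compact open groups $\gog_v$ and $\gog_\eue$, with finite quotient $G\backslash T\simeq\Lambda$. Since $T$ is a one-dimensional contractible $G$-CW-complex with compact open stabilizers, it serves as a rational discrete classifying space for $G$ of dimension at most one, yielding $\ccd_\Q(G)\leq 1$. Moreover, the cocompact action on the simply connected complex $T$ with compact stabilizers produces both compact generation and compact presentability of $G$ by a standard topological argument (cocompact action on a $1$-connected complex with compact stabilizers forces a compact presentation).

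For $(i)\Rightarrow(ii)$, the main input is \cite[Theorem~4.H.1]{cor:qic}: every compactly presented t.d.l.c. group is \emph{accessible}, meaning $G\simeq\pi_1(\gog',\Lambda')$ for some finite connected graph $\Lambda'$, with all edge groups $\gog'_\eue$ profinite and each vertex group $\gog'_v$ compactly generated with at most one end. It then suffices to deduce from the hypothesis $\ccd_\Q(G)\leq 1$ that every vertex group $\gog'_v$ is in fact profinite, from which $(ii)$ follows at once.

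Viewing $\gog'_v$ as a closed subgroup of $G$ stabilizing a vertex of the Bass--Serre tree $T'$ of $(\gog',\Lambda')$, the long exact Mayer--Vietoris sequence in rational discrete cohomology associated to the action of $G$ on $T'$, together with the vanishing $H^n(G,\argu)=0$ for $n\geq 2$, transfers the bound $\ccd_\Q(\gog'_v)\leq 1$ to every vertex group. The remaining, and principal, obstacle is to exclude the existence of a compactly generated, non-compact, one-ended t.d.l.c. group of rational discrete cohomological dimension at most $1$: one must argue that any non-profinite compactly generated t.d.l.c. group with $\ccd_\Q\leq 1$ admits a non-trivial splitting over a compact open subgroup, and hence has more than one end. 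This ends-versus-cohomology analysis for t.d.l.c. groups, relating the presence of multiple ends to the non-vanishing of a suitable first cohomology group with rational coefficients, is the technical heart of the argument; once it is in place, every $\gog'_v$ must be profinite and $(ii)$ is established.
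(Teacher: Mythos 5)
The paper does not actually prove Theorem~C: it quotes it from \cite{IC:cone} and records only that the original argument rests on Cornulier's accessibility theorem \cite[Theorem 4.H.1]{cor:qic}. Your outline reconstructs precisely that route --- Bass--Serre tree as a one-dimensional cocompact model with compact open stabilisers for $(ii)\Rightarrow(i)$, and Cornulier's accessibility plus the ends-versus-cohomology dichotomy for $(i)\Rightarrow(ii)$ --- so it is essentially the same approach as the cited proof, and all the ingredients you invoke are available in the present paper. Two small remarks. First, to get $\ccd_\Q(\gog'_v)\leq 1$ for the vertex groups you should not route through the Mayer--Vietoris sequence: the coefficients there are discrete $\QG$-modules, not arbitrary discrete $\Q[\gog'_v]$-modules, so concluding a bound on $\ccd_\Q(\gog'_v)$ would need an extra Shapiro-type step; the monotonicity $\ccd_\Q(H)\leq\ccd_\Q(G)$ for closed subgroups $H\leq G$ (Section~\ref{s: Rat}, citing \cite[Proposition~3.7]{cw:qrat}) gives it directly. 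Second, the ``technical heart'' you leave unproved is exactly the chain recorded in Fact~\ref{fact:one end}: a compactly generated t.d.l.c. group with $\ccd_\Q=1$ is of type $\FP$, hence $\dH^1(G,\Bi(G))\neq 0$ by \cite[Proposition~4.7]{cw:qrat}, hence it has more than one end and splits non-trivially over a compact open subgroup by \cite[Theorem~A*]{IC:cone}; this forces every at-most-one-ended vertex group to be profinite, closing your argument.
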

An affirmative answer to the following question, which was raised by the referee, would provide a t.d.l.c.~analogue to Theorem~\hyperref[thmA]{A} in the case of compact generation. 
\begin{quesD}\label{quesD}
Is a t.d.l.c.~group of rational discrete cohomological dimension~$1$ \emph{coherent} (i.e., every compactly generated closed subgroup is compactly presented)?
\end{quesD}
Theorem~\hyperref[thmB]{B} and Theorem~\hyperref[thmC]{C} admit the following interpretation.
A compactly generated t.d.l.c.~group $G$ satisfying $\ccd_{\Q}(G)\leq 1$ and
an extra finiteness condition is accessible in the sense above. This extra finiteness condition is compact presentability -- which is equivalent
to being of type $\FF_2$ (cf.~\cite[Proposition~3.4]{ccc:finite}) -- or being $\CO$-bounded.
At first sight, these two conditions
seem unrelated. However, for example, it is well-known that a unimodular t.d.l.c.~group $G$ admitting a cocompact topological model
is necessarily $\CO$-bounded (cf.~\cite[Remark~1.11]{RT2017}). Hence another consequence of Theorems~\hyperref[thmB]{B}~and~\hyperref[thmC]{C} is the following.
\begin{corE}\label{corE}
Let $G$ be a compactly generated t.d.l.c.~group with \mbox{$\ccd_{\Q}(G)\leq 1$.}
Then $G$ is {$\CO$-bounded}  if, and only if, it is compactly presented and unimodular.
\end{corE}
In \cite[Theorem~1]{lin:acc}, P.~A.~Linnell proved that a finitely generated group whose finite subgroups have bounded order is accessible.
The following question asks for a t.d.l.c.~analogue of P.~A.~Linnell's result.
\begin{quesF}
    Is a compactly generated $\CO$-bounded t.d.l.c.~group accessible?
\end{quesF}
Another consequence of Theorem~\hyperref[thmB]{B} is the following partial answer to Question~\hyperref[quesF]{F}.
\begin{corG}\label{corG}
Let $G=\pi_1(\gog, \Lambda)$ be the t.d.l.c.~fundamental group of a finite graph $(\gog, \Lambda)$ of $\CO$-bounded t.d.l.c.~groups with profinite edge-groups and $\ccd_\Q(G)\leq 1$. If $G$ is compactly generated, then $G$ is accessible.
\end{corG}
\begin{proof}
Each vertex-group $\gog_v$ of $(\gog,\Lambda)$ is an open subgroup of $G$. So if $G$ is compactly generated, 
every $\gog_v$ is a compactly generated t.d.l.c.~group satisfying  $\ccd_\Q(\gog_v)\leq 1$ (cf.~\cite[Proposition~4.1]{IC:cone} and \cite[Proposition~3.7(c)]{cw:qrat}). As every t.d.l.c.~fundamental group of a finite graph of accessible t.d.l.c.~groups with compact edge-groups is accessible (cf.~Proposition~\ref{prop: AccFact}), the claim directly follows from Theorem~\hyperref[thmB]{B}.
\end{proof}
The proof of Theorem~\hyperref[thmB]{B} is based on the following important result about the Euler--Poincaré characteristic $\tilde{\chi}_G$ of a t.d.l.c.~group $G$. This invariant, introduced in \cite{ccw:euler}, is defined only for unimodular t.d.l.c.~groups of type $\FP$ over $\Q$.
\begin{thmH}[\protect{cf.~Theorem~\ref{thm:chi}}]\label{thmH}
Let $G$ be a compactly generated unimodular t.d.l.c.~group with $\ccd_\Q(G)=1$. Then the Euler--Poincaré characteristic $\tilde{\chi}_G$ of $G$ satisfies $\tilde{\chi}_G\leq 0$. Moreover, $\tilde{\chi}_G=0$ if and only if $G\simeq X\ast_UY$ for some compact open subgroups $X,Y,U\leq G$ satisfying $|X:U|=|Y:U|=2$.
\end{thmH}
The proof of Theorem~\hyperref[thmH]{H} is inspired by a technique used by P.~A.~Linnell~\cite{lin:acc} and contributes to the general understanding of the Euler--Poincaré characteristic of a t.d.l.c.~group. So far the conclusion of 
Theorem~\hyperref[thmH]{H} was only known for non-compact unimodular fundamental groups of finite graphs of profinite groups (cf.~\cite[Corollary~C]{ccw:euler}). Note that Theorem~\hyperref[thmA]{A} implies that the classical rational Euler--Poincaré characteristic $\chi_G$ is non-positive if the group $G$ is finitely generated and $\ccd_\Q(G)=1$. Therefore, Theorem~\hyperref[thmH]{H} can be considered as generalisation of a well-known result for finitely generated virtually free groups. 

We conclude this preliminary discussion with the following observation.
Since the family of finite subgroups of a discrete group has a minimal element, for a finitely generated group $G$ being accessible is equivalent to the following property: there exists $k(G)<\infty$ such that $|\euE(\Lambda)|\leq k(G)$ for every minimal graph of groups $(\gog,\Lambda)$ with finite edge-groups such that $G\simeq\pi_1(\gog,\Lambda)$ (\cite[Lemma~6.7]{sw77}).
For example, in the context of pro-$p$ groups where an analogue of the Stallings' decomposition theorem is missing, the latter property is used to define the notion of accessibility (cf.~\cite{cz23,cz22,wilk19}). For a compactly generated t.d.l.c.~group we do not know whether being accessible is equivalent to the existence of a uniform bound $k(G)$ for the number of edges appearing in a decomposition as fundamental group of a graph of t.d.l.c.~groups with compact edge-groups. However, for compactly generated unimodular t.d.l.c.~groups of rational discrete cohomological dimension $1$ we prove the latter equivalence in Corollary~\ref{cor:inaccuni}.
\subsection*{Organisation}
We start with an introduction to the rational discrete cohomology for t.d.l.c.~groups, which is the only cohomology theory used in this paper, see Section~\ref{s:Rat}.
In Section~\ref{s:W*} we deal with von Neumann algebras, and recall the definition of the $W^*$-algebra $W(G,\caO)$ associated to the Hecke pair $(G, \caO)$, where $G$ is a t.d.l.c.~group and $\caO$ is a compact open subgroup. On such a $W^*$-algebra we prove the existence of a well-defined trace map $\tau\colon W(G,\caO)\to \C$, that is playing a central role throughout the paper. A few technical properties of $\tau$ are then provided in support of the proof of Proposition~\ref{prop: Approx}, which is one of the key results of the paper. Section~\ref{s:euchar} is indeed mainly devoted to the proof of Proposition~\ref{prop: Approx}, which provides a suitable lower bound for the Hattori--Stallings rank of the augmentation left $\QG$-module $N^G_{\caO}$. The module $N^G_{\caO}$ is defined as the kernel of the augmentation map $\Q[G/\caO]\to\Q$, which turns out to be a finitely generated projective discrete $\QG$-module exactly when $G$ is compactly generated and $\ccd_\Q(G)\leq1$. Therefore, $N^G_\caO$ provides a projection in the $W^*$-algebra of matrices $M_n(W(G,\caO))$ whose trace is then estimated.
A notable consequence of Proposition~\ref{prop: Approx} is Theorem~\ref{thm:chi}. It establishes the non-positivity of the Euler--Poincaré characteristic of a compactly generated unimodular t.d.l.c.~group of rational discrete cohomological dimension $1$.

Summing up, Sections~\ref{s:W*}~and~\ref{s:euchar} consist of the technical part of the paper, which is the crux of our main results. Section~\ref{s:acc} moves the focus onto accessibility. After some preliminary definitions, we delineate two possible scenarios for compactly generated t.d.l.c.~groups which are inaccessible (cf.~Proposition~\ref{prop:inacc}). Using Theorem~\ref{thm:chi}, we may exclude one of them provided the t.d.l.c.~group $G$ satisfies $\ccd_\Q(G)=1$.
Section~\ref{s:acc} culminates with Theorem~\ref{thm: firstRes}, which proves that a compactly generated unimodular t.d.l.c.~group $G$ is accessible whenever 
$\ccd_\Q(G)=1$ and $G$ is $\CO$-bounded, i.e., the compact open subgroups in $G$ have a finite uniform bound on their Haar measure. Examples of $\CO$-bounded groups are unimodular t.d.l.c.~groups that admit a topological model on which they act with finitely many orbits on the $0$-cells, cf.~Example~\ref{ex: prelThm}.
\subsection*{Conventions}
In this paper, every isomorphism between topological groups is always meant to be continuous and with continuous inverse.

\section{Rational discrete cohomology: a summary}\label{s:Rat}
This section summarises the main definitions and properties of the rational discrete cohomology for t.d.l.c.~groups (cf.~\cite{cw:qrat}) that will be used throughout the paper.

\smallskip
Let $G$ be a t.d.l.c.~group. A \emph{discrete left $\QG$-module} $M$ is an abstract left $\Q[G]$-module equipped with the discrete topology such that the $G$-action $\cdot\colon G\times M\longrightarrow M$ is a continuous map (here $G\times M$ is endowed with the product topology). 
\begin{fact}\label{fact:openstab}
Let $G$ be t.d.l.c.~group. A left $\QG$-module $M$ is discrete if, and only if, for every $m\in M$ the group $\mathrm{stab}_G(m)=\{g\in G\mid g\cdot m=m\}$ is open in $G$.
\end{fact}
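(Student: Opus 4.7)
The plan is to prove the two directions by direct topological arguments, exploiting in each case the fact that in a discrete module every singleton is open, so that continuity of the action reduces to openness of preimages of points.

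For the forward implication, I would fix $m\in M$ and consider the orbit map $\phi_m\colon G\to M$, $g\mapsto g\cdot m$. This map factors as
\[
G\xrightarrow{\;(\iid_G,\,c_m)\;} G\times M\xrightarrow{\;\cdot\;} M,
\]
where $c_m$ is the constant map with value $m$ and the first arrow is continuous because both components are. Composition with the continuous action yields that $\phi_m$ is continuous. Since $M$ carries the discrete topology, $\{m\}$ is open in $M$, hence $\mathrm{stab}_G(m)=\phi_m^{-1}(\{m\})$ is open in $G$.

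For the backward implication, assume $\mathrm{stab}_G(m)$ is open in $G$ for every $m\in M$; I would verify continuity of the action $\cdot\colon G\times M\to M$ at an arbitrary point $(g,m)\in G\times M$. A basic open neighbourhood of the image $g\cdot m$ is $\{g\cdot m\}$, so it suffices to exhibit a product open neighbourhood $V\times W$ of $(g,m)$ with $V\cdot W\subseteq\{g\cdot m\}$. The natural choice is $W=\{m\}$, which is open by discreteness of $M$, and $V=g\cdot\mathrm{stab}_G(m)$, which is open by hypothesis together with continuity of left translation in $G$. For every $h=gs$ with $s\in\mathrm{stab}_G(m)$ one has $h\cdot m=(gs)\cdot m=g\cdot(s\cdot m)=g\cdot m$, so $V\cdot W=\{g\cdot m\}$, as required.

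Both directions are one-liners once the correct open sets are chosen, and there is no real obstacle; the only point worth being careful about is that the ``only if'' direction needs no assumption beyond the continuity of the action (in particular no local-compactness or total disconnectedness of $G$), while the ``if'' direction uses only that singletons in $M$ are open and that left translation in $G$ is continuous.
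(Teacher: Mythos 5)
Your proof is correct. The paper states this as a Fact without giving any proof (it is a standard observation imported from the rational discrete cohomology framework of \cite{cw:qrat}), and your argument is exactly the standard one that would be supplied: the forward direction via continuity of the orbit map $\phi_m$ and openness of $\{m\}$, and the converse via the neighbourhood $g\cdot\mathrm{stab}_G(m)\times\{m\}$ of $(g,m)$. Both directions check out, including your closing remark that neither local compactness nor total disconnectedness of $G$ is used.
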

The category $\QGdis$ is the full subcategory of $\QGmod$ whose objects are discrete left $\QG$-modules (here $\QGmod$ denotes the category of abstract left $\QG$-modules). By \cite[Fact~2.2~and~Proposition~3.2]{cw:qrat}, $\QGdis$ is an abelian category with enough injectives and enough projectives. 
In particular, for every $\caO\in\CO(G)$, the \emph{transitive left $\QG$-permutation module} $\Q[G/\caO]$ is projective in $\QGdis$ (cf.~\cite[Proposition~3.2]{cw:qrat}). By Fact~\ref{fact:openstab}, every $M\in\mbox{ob}(\QGdis)$ admits a collection of compact open subgroups $\{\caO_i\}_{i\in I}$ of $G$ together with an epimorphism 
\begin{equation}\label{eq:epi}
\pi_M\colon \coprod_{i\in I}\Q[G/\caO_i]\twoheadrightarrow M.
\end{equation}
In particular $M$ is projective if, and only if, it is a direct summand of $\coprod_{i\in I}\Q[G/\caO_i]$ (cf.~\cite[Corollary~3.3]{cw:qrat}).
Moreover, $M$ is said to be \emph{finitely generated} in $\QGdis$ if the set $I$ can be chosen finite. One easily checks that this is equivalent to the existence of finitely many elements $m_1,...,m_n$ in $M$ satisfying $M=\sum_{i=1}^n \Q[G]\cdot m_i$. 

\smallskip

According to \cite[\S3]{cw:qrat}, for every $n\geq 0$ the \emph{$n^{th}$ rational discrete cohomology functor} of $G$, denoted by 
\begin{equation}
\dH^n(G,\argu)\colon\QGdis \longrightarrow \Qvect,
\end{equation}
is defined as the $n^{th}$ right-derived functor of the fixed-point functor $(\argu)^G$ from $\QGdis$ to $\Qvect$.
The \emph{rational discrete cohomological dimension} of $G$  is given by 
\begin{equation}\label{eq: cd}
\ccd_\Q(G)\ce\sup\{n\in\Z_{\geq 0}\mid\dH^n(G,\argu)\neq 0\}.
\end{equation}
(cf.~\cite[p.~115]{cw:qrat}). By \cite[Proposition~3.7]{cw:qrat}, $\ccd_\Q(G)=0$ if and only if $G$ is compact (i.e., profinite), and $\ccd_\Q(H)\leq \ccd_\Q(G)$ whenever $H$ is a closed subgroup of $G$. By~\cite[Lemma~3.6]{cw:qrat}, $\ccd_\Q(G)\leq 1$ if and only if the trivial discrete $\QG$-module~$\Q$ admits a projective resolution of length at most~$1$ in $\QGdis$.

\smallskip

Unless $G$ is discrete, the group algebra $\QG$ is not an object of $\QGdis$ (cf.~Fact~\ref{fact:openstab}). A suitable substitute of the group algebra has been introduced in \cite[\S4.2]{cw:qrat}: the \emph{rational discrete standard $\QG$-bimodule} $\Bi(G)$. It is defined as a direct limit of $\{\Q[G/\caO]\mid \caO\in \CO(G)\}$. If $G$ is discrete, $\Bi(G)$ is naturally isomorphic to $\QG$. By \cite[Remark~4.3]{cw:qrat}, when $G$ is unimodular, $\Bi(G)$ is (non-canonically) isomorphic to the associative convolution algebra $C_c(G,\Q)$ of all continuous functions with compact support from $G$ to $\Q$, where $\Q$ has the discrete topology. The reader may find it convenient to keep this isomorphism in mind while reading Section~\ref{s:W*}.

In analogy to the discrete case, one can use the rational discrete standard bimodule to give a cohomological characterisation of a compactly generated t.d.l.c.~group $G$ which has more than one end. Namely, a compactly generated group $G$ has \emph{more than one end} if, and only if, $\dH^1(G,\Bi(G))\neq0$ (cf.~\cite[Theorem~$\textup{A}^*$]{IC:cone}). Moreover, $G$ has \textit{zero ends} if and only if it is compact.

\smallskip

According to \cite[\S3.6~and~\S4.5]{cw:qrat}, a discrete left $\QG$-module $M$ is of \emph{type $\FP_n$} ($n\geq 0$) if it admits a partial projective resolution in $\QGdis$
\begin{equation}\label{eq:finproj}
\xymatrix{P_n\ar[r]&\ldots\ar[r]&P_1\ar[r]&P_0\ar[r]&M\ar[r]&0}
\end{equation}
 with $P_0,\ldots,P_n$ finitely generated. The  discrete left $\QG$-module $M$ is  of \emph{type $\FP_\infty$} if it is of type $\FP_n$ for all $n\geq 0$, and  $M$ is of \emph{type $\FP$} if there exists a projective resolution in $\QGdis$
\begin{equation}\label{eq:finprojFP}
\xymatrix{0\ar[r]&P_n\ar[r]&\ldots\ar[r]&P_1\ar[r]&P_0\ar[r]&M\ar[r]&0}
\end{equation}
 which has finite length $n\geq0$ and every $P_i$ being finitely generated. A projective resolution with the latter two properties is called {\em finite}. 
The t.d.l.c.~group $G$ is of \emph{type $\FP_n$} (resp.~of \emph{type $\FP$}) if $\Q$ is of type $\FP_n$ (resp.~of type $\FP$) as trivial discrete left $\QG$-module. E.g., compact generation is equivalent to being of type $\FP_1$ (cf.~\cite[Proposition~5.3]{cw:qrat}).
By \cite[Ch.~VIII, Proposition~6.1]{bro:coho}, $G$ is of \emph{type $\FP$} if, and only if, it is of type $\FP_\infty$ and $\ccd_\Q(G)<+\infty$.

\smallskip

Similarly to the discrete case, one introduces the notion of \emph{t.d.l.c.~group of type $\FF_n$} ($0\leq n\leq +\infty$) and \emph{t.d.l.c.~group of type $\FF$}.
A {\it discrete $G$-CW-complex} is a $G$-CW-complex $X$ such that the action of $G$ on $X$ is continuous and by cell-permuting homeomorphisms\footnote{As usual, an element $g\in G$ fixing a cell $\sigma$ setwise can be assumed to fix $\sigma$ also pointwise.} (cf.~\cite[p.~5]{mv}). A discrete $G$-CW-complex $X$ is said to be {\em proper} if the cell stabilisers are compact.
As pointed out in \cite[\S6.2]{cw:qrat}, $G$ always admits an {\it $\underbar{E}_{\CO}(G)$-space}, i.e.~a proper discrete $G$-CW-complex $X$ which is contractible and such that, for all $\caO\in\CO(G)$, the fixed-point set $X^{\caO}$ is non-empty and contractible. According to \cite{ccc:finite}, a t.d.l.c.~group $G$ is of \emph{type $\FF_n$} (with $0\leq n<+\infty$) if there exists a contractible
proper discrete $G$-CW-complex $X$  such that $G$ acts on the
$n$-skeleton of $X$ with finitely many orbits. In particular, $G$ is of \emph{type $\FF$} if there exists a finite-dimensional contractible proper discrete
$G$-CW-complex $X$ such that, for every $n\geq 0$, $G$ acts on the
$n$-skeleton of $X$ with finitely many orbits.
\section{The $W^*$-algebra of a Hecke pair}\label{s:W*}
\subsection{$C^*$-algebras}
An {\it algebra of operators} $A$ is a subalgebra of the algebra $\mathcal{B}(H)$ of all bounded operators on a Hilbert space $H$. The algebra $A$ is said to be {\it self-adjoint} if it is closed under the adjoint operation $(\argu)^*\colon\mathcal{B}(H)\to\mathcal{B}(H)$, i.e., $A=A^*$.
A uniformly closed self-adjoint algebra of operators is called a {\it $C^*$-algebra}. By the Gelfand-Naimark theorem, $C^*$-algebras can be defined in a completely abstract (but equivalent) manner. Namely, a \emph{$C^*$-algebra} is a Banach algebra $A$ with an involution $(\argu)^*$ (i.e., a conjugate-linear self-map of $A$ such that $x^{**}=x$ and $(xy)^*=y^*x^*$ for all $x,y\in A$) which satisfies $\nor x^*x\nor=\nor x\nor^2$ for all $x\in A.$
Hence, one may study $C^*$-algebras without paying attention to any particular representation.
\subsection{$W^*$-algebras}\label{ss:w*}
A {\it $W^*$-algebra} (or {\it von Neumann algebra}) is a weakly closed self-adjoint algebra of operators on a Hilbert space $H$. Let $A$ be an algebra of operators which contains the identity operator and is closed under taking adjoints. By \cite[Theorem 1.2.1]{arveson}, the weak closure of $A$ is equal to the bicommutant\footnote{
For every subset $S\subseteq A$, let $S'\ce\{a\in A: as=sa, \forall\, s\in S\}$ be the \textit{commutant of $S$}. Then $S''\ce(S')'$ is called the \textit{bicommutant of $S$}.} $A''$ of $A$ and it is called the {\it $W^*$-algebra generated by $A$}.

 Let $A$ be a $W^*$-algebra acting on the complex Hilbert space $H$. Denote by $M_n(A)$ the set of $n\times n$ matrices with entries in $A$. The algebra $M_n(A)$ acts on the $n$-fold direct sum $H^n=H\oplus\cdots\oplus H$ through the usual matrix action on the column vectors. Therefore, $M_n(A)$ can be regarded as a self-adjoint subalgebra of $\mathcal{B}(H^n)$ where, for every matrix $M=[m_{jk}]\in M_n(A)$, the matrix $M^\ast=[n_{jk}]$ is defined by
$n_{jk}:=m^*_{kj}.$
 In order to show that $M_n(A)$ is a $W^*$-algebra, it suffices to check that the double commutant $M_n(A)''$ coincides with $M_n(A)$. Since the commutant $M_n(A)'$ is $A'\cdot I_n$ (where $I_n$ is the identity matrix and $A^\prime$ is the commutant of $A$ acting on $H$), it is easily verified that
$$M_n(A)''=M_n(A'')=M_n(A).$$
\begin{defn}
A (finite) {\em trace} on a $W^*$-algebra $A$ is a $\C$-linear function $\tau\colon A\to\C$ satisfying
$\tau(ab)=\tau(ba)$ for all $a,b\in A$. A trace $\tau$ is {\it positive} if $\tau(a^*a)\geq 0$ for every $a\in A$. It is said to be {\em faithful} if for every $a\in A$ one has $\tau(a^*a)=0$ if, and only if, $a^*a=0$.
\end{defn}
If $A$ is a $W^*$-algebra with trace $\tau$, then $M_n(A)$ is a $W^*$-algebra with trace
\begin{equation}
    (\tau\otimes\mathrm{id}_n)(M)\ce\tau(m_{11})+\ldots+\tau(m_{nn}),
\end{equation}
for any matrix $M=[m_{ij}]\in M_n(A)$.
\begin{rem} Despite the $C^*$-case, no intrinsic axioms are known for $W^*$-algebras. I.~Kaplanski \cite{kap:proj} proposed an algebraic {generalisation} of $W^*$-algebras based on the assumption of least upper bounds in the poset of projections (= self-adjoint idempotents) of the operator algebra: the so called {\em $AW^*$-algebras}. Every $W^*$-algebra is an $AW^*$-algebra.
\end{rem}
\subsection{The algebra $\mathcal H(G,\mathcal O)_\Q$}
Let $G$ be a t.d.l.c.~group and $\CO(G)=
\{\caO\mid \caO\ \text{compact open subgroup of $G$}\}$. Denote by $C_c(G,\Q)$ the space of  continuous functions with compact support from $G$ to $\Q$, where $\Q$ carries the discrete topology. It has a natural structure of discrete $\Q[G]$-bimodule given by the commuting actions
\begin{equation}
((g.f). h))(x)=f(g^{-1}(xh^{-1}))=f((g^{-1}x)h^{-1})=(g.(f. h))(x),
\end{equation}
for all $g,h,x\in G$ and $f\in C_c(G,\Q)$. Given $S\subseteq G$, let $I_S(\argu)\colon G\to\Q$ be the map defined by $I_S(x)=1$ if $x\in S$ and $I_S(x)=0$ otherwise.
Set
\begin{equation}\label{eq:dbr}
    [g\dbr_\caO\ce I_{g\caO}\quad\text{and}\quad \dbl g\dbr_\caO\ce I_{\caO g\caO},\quad \forall\, \caO\in\CO(G).
\end{equation}
Note that
$C_c(G,\Q)=\mathrm{span}_\Q\{{[g\dbr_\caO\mid g\in G,\,\mathcal O\in\mathcal{CO}(G)}\}$. Moreover, for every $\caO\in\CO(G)$,
\begin{equation}\label{eq: cgo}
C_c(G,\Q)^\caO\ce\{f\in C_c(G,\Q)\mid \forall\, w\in \caO, f.\omega=f\}={\mathrm{span}_\Q\{[g\dbr_\caO\mid g\in G\}}.
\end{equation}
and
\begin{align}\label{eq:hgo}
\mathcal{H}(G,\mathcal O)_\Q \ce& \, \{f\in C_c(G, \Q)\mid\forall \omega\in \mathcal O\colon \omega .f=f=f.\omega\} \\
=&\, {\mathrm{span}_\Q\{\dbl g\dbr_\caO\mid g\in G\}}.\nonumber
\end{align}
Let $\mu_{\mathcal O}$ be the Haar measure on $G$ such that $\mu_{\mathcal O}(\mathcal O)=1$. We omit the subscript ${\_\,}_\caO$ whenever the compact open subgroup $\caO$ is clear from the context.

The space $\mathcal{H}(G,\mathcal O)_\Q$ becomes an algebra when equipped with the convolution product given by
\begin{equation}
(f_1\ast_{\mu_{\caO}} f_2)(x)\ce\int_G f_1(w)f_2(w^{-1}x)\dd\mu_{\mathcal O}(w),\quad\forall f_1,f_2\in\mathcal{H}(G,\mathcal O),
\end{equation}
and it is called the {\it Hecke $\Q$-algebra of the Hecke pair $(G,\caO)$} (cf.~\cite[$\S$3.5]{ccw:euler}).
\smallskip

By \cite[Fact~3.2(a)]{ccw:euler}, the discrete left $\Q[G]$-module $C_c(G,\Q)^\caO$ is isomorphic to the left discrete $\QG$-permutation module $\Q[G/\caO]$, for every $\caO\in\CO(G)$. In particular, one has the following.

\begin{fact}[\protect{\cite[Proposition~3.7(b)]{ccw:euler}}]\label{fact: EndoH(G,O)nxn}
Let $G$ be a t.d.l.c.~group with a compact open subgroup $\caO$. 
Let $\phi_\ast\colon\mathcal{H}(G,\caO)_\Q^{op}\to \mathrm{End}_G(C_c(G,\Q)^\caO)$ denote the isomorphism of \protect{\cite[Proposition~3.7(b)]{ccw:euler}}. Then:
\begin{itemize}
 \item[(a)] there is an isomorphism of $\Q$-algebras  $$\Phi_*\colon M_n\Big(\mathcal{H}(G,\caO)_\Q^{op}\Big)\to \mathrm{End}_G\Big ((C_c(G,\Q)^\caO)^n\Big )$$
 defined, for every $f=[f_{ij}]\in M_n(\mathcal{H}(G,\caO)_\Q^{op})$, as
$$\Phi_*(f)\Big(\, [0,\ldots,\stackrel{\text{j-th}}{[g\dbr},\ldots,0\Big ]^t\,\Big)\ce\Big [\phi_\ast(f_{1j})([g\dbr),\ldots,\phi_\ast(f_{nj})([g\dbr) \Big ]^t, \ \forall\,j=1,\ldots, n;$$
\item[(b)] the inverse  $\Psi_\ast\colon\mathrm{End}_G\Big ((C_c(G,\Q)^\caO)^n\Big ) \to M_n\Big(\mathcal{H}(G,\caO)_\Q^{op}\Big)$ of $\Phi_*$ is 
$$(\Psi_\ast(\alpha))_{ij}\ce(\phi_\ast)^{-1}(\alpha_{ij}),\quad \forall 1\leq i,j\leq n.$$
Here for each $\alpha\in \mathrm{End}_G\Big ((C_c(G,\Q)^\caO)^n\Big )$ {and $i,j\in\{1,\ldots,n\}$, the map} $\alpha_{ij}\in \mathrm{End}_G(C_c(G,\Q)^\caO)$ is defined as
$$\alpha_{ij}([g\dbr)\ce pr_i\Big (\alpha\Big(\Big [0,...,\stackrel{\text{j-th}}{[g\dbr},...,0\Big ]\Big )\Big ),\qquad {\forall g\in G,}
$$
where $pr_i:(C_c(G,\Q)^\caO)^n\longrightarrow C_c(G,\Q)^\caO$ projects on the $i$-th component.
\end{itemize}
\end{fact}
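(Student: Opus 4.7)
The statement is the standard module-theoretic identification $\mathrm{End}_R(P^n)\cong M_n(\mathrm{End}_R(P))$, applied to the discrete left $\Q[G]$-module $P=C_c(G,\Q)^\caO$ and then pulled back entrywise along the isomorphism $\phi_*$ of \cite[Proposition~3.7(b)]{ccw:euler}. My plan is to factor the claimed isomorphism as
$$M_n(\mathcal{H}(G,\caO)_\Q^{op})\xrightarrow{M_n(\phi_*)}M_n\bigl(\mathrm{End}_G(C_c(G,\Q)^\caO)\bigr)\xrightarrow{\sim}\mathrm{End}_G\bigl((C_c(G,\Q)^\caO)^n\bigr),$$
where the right arrow is the canonical $\Q$-algebra isomorphism sending a matrix of endomorphisms to its action on column vectors. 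The explicit formula in (a) is precisely this composition evaluated on the generators $\bigl[0,\ldots,[g\dbr,\ldots,0\bigr]^t$.

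The plan then splits into three routine verifications. First, I would extend the formula in (a) $\Q$-linearly to all of $(C_c(G,\Q)^\caO)^n$: for $v=[v_1,\ldots,v_n]^t$, set $\Phi_*([f_{ij}])(v)_k=\sum_j\phi_*(f_{kj})(v_j)$ and observe that $G$-equivariance transfers from each $\phi_*(f_{kj})$ to $\Phi_*([f_{ij}])$. Second, I would check multiplicativity by direct expansion: using that $\phi_*$ converts the opposite product $a\cdot_{op}b=ba$ into composition of endomorphisms, one computes
$$\bigl(\Phi_*(M)\circ\Phi_*(N)\bigr)(v)_i=\sum_{j,k}\phi_*(M_{ij})\bigl(\phi_*(N_{jk})(v_k)\bigr)=\sum_k\phi_*\Bigl(\sum_j N_{jk}M_{ij}\Bigr)(v_k),$$
and the inner sum is exactly the $(i,k)$-entry of $MN$ in $M_n(\mathcal{H}(G,\caO)_\Q^{op})$. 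Third, I would verify that the map $\Psi_*$ of (b) is a two-sided inverse: on the generator with $[g\dbr$ in position $i$, the $k$-th component of $\Phi_*(\Psi_*(\alpha))$ evaluates to $\phi_*(\phi_*^{-1}(\alpha_{ki}))([g\dbr)=\alpha_{ki}([g\dbr)$, which matches $pr_k$ applied to $\alpha$ on this generator by the very definition of $\alpha_{ki}$; conversely $\Psi_*(\Phi_*([f_{ij}]))_{ij}=\phi_*^{-1}(\phi_*(f_{ij}))=f_{ij}$.

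The only conceptual point requiring care is the opposite-algebra bookkeeping: it is precisely because $\phi_*$ is an isomorphism out of the opposite Hecke algebra (rather than $\mathcal{H}(G,\caO)_\Q$ itself) that the entrywise passage to matrices lines up with composition of endomorphisms in the correct order, with no stray transpose. Once this convention is fixed, every step reduces to a routine check on the generators $\{[g\dbr\mid g\in G\}$ of $C_c(G,\Q)^\caO$ (see \eqref{eq: cgo}) extended by $\Q$-linearity and $G$-equivariance.
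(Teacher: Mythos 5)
Your proposal is correct and follows essentially the same route as the paper: the paper states this as a Fact deferring to \cite[Proposition~3.7(b)]{ccw:euler} together with the canonical identification $M_n(\mathrm{End}_G(P))\cong\mathrm{End}_G(P^n)$, and the detailed verification (present in the authors' source) is exactly your entrywise computation, including the same opposite-algebra bookkeeping $\phi_*(M_{ij}\cdot_{op}N_{jk})=\phi_*(M_{ij})\circ\phi_*(N_{jk})$ and the same two-sided-inverse check for $\Psi_*$ on the generators $[g\dbr$.
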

\subsection{$\mathcal{H}(G,\mathcal O)_\C$ as operator algebra} 
In the following, $G$ is a unimodular compactly generated t.d.l.c.~group and, for every compact open subgroup $\caO\leq G$, let $\mathcal{H}(G,\caO)_\C\ce\mathcal{H}(G,\caO)_\Q\otimes_\Q \C$. Denoting by $\mu$ a Haar measure on~$G$, let
\begin{equation}\label{eq:L2GC}
    L^2(G,\C)\ce\Bigg \{f\colon G\to \C\ \mbox{measurable} \,\,\Bigg|\,\, \int_G|f(w)|^2 \dd\mu(w)<\infty\Bigg \}
\end{equation} 
be equipped with the inner product $\langle f_1,f_2\rangle=\int_G f_1(w)\overline{f_2(w)}\dd\mu(w),$ for all $f_1,f_2\in L^2(G,\C)$, and let $\nor\argu\nor_2\colon L^2(G,\C)\to\R_0^+$ be the Hilbert norm associated to $\langle \argu,\argu\rangle$.
In~\eqref{eq:L2GC}, $\C$ is endowed with the usual Borel $\sigma$-algebra.

Since $G$ acts continuously on $L^2(G,\C)$, for every compact open subgroup~$\caO\leq G$ the $\C$-subspace
\begin{equation}\label{eq:L2O}
L^2(G,\C)^{\mathcal O}\ce\{f\in L^2(G,\C)\mid f.\omega=f,\,\forall\,\omega\in\caO\}
\end{equation}
 is closed in $L^2(G,\C)$ and therefore it is a Hilbert subspace.  
Since $G$ is compactly generated, there is a finite set $S\subseteq G$ such that $G=\langle S\rangle\cdot \caO$ (cf.~\cite[Lemma~2]{mol}) and~$G/\caO$ is countable.
Given a set $\mathcal R$ of representatives for $G/\caO$, the set $\{[g\dbr\mid g\in\mathcal R\}$ (cf.~\eqref{eq:dbr}) is a countable orthonormal (Hilbert) basis of the Hilbert space $L^2(G,\C)^\caO$ (cf.~\cite[Fact~3.3]{ccw:euler}). 

Denote by $\mathcal{B}(G,\mathcal O)$ the $C^*$-algebra of bounded linear operators on~$L^2(G,\C)^{\mathcal O}$ which commute with the left $G$-action. The space $\mathcal{B}(G,\mathcal O)$ comes with the standard operator norm $\nor\argu\nor\colon\mathcal{B}(G,\mathcal O)\to\mathbb{R}_0^+$
and with the adjoint map $(\argu)^*:\mathcal{B}(G,\mathcal O)^{\text{op}}\to \mathcal{B}(G,\caO)$. 
 As shown in {\cite[Proposition~3.7]{ccw:euler}}, $\mathcal{H}(G,\caO)_\C$ admits an action on the Hilbert space $L^2(G,\C)^\caO$ and the convolution produces the following canonical injective homomorphism of algebras:
\begin{equation}\label{eq:phi}
    \phi\colon\mathcal{H}(G,\mathcal O)_\C^{op}\to\mathcal{B}(G,\mathcal O),\quad (\phi(f))([g\dbr)=[g\dbr\ast_{\mu_\caO}f,\quad\forall g\in G.
\end{equation}
{Here $\mu_\caO$ denotes the left Haar measure on $G$ satisfying $\mu_\caO(\caO)=1$.}
The map $\phi$ is continuous when $\mathcal{H}(G,\mathcal O)_\C$ is endowed with the topology induced by $\Vert \argu \Vert_1$, where 
$$\Vert f\Vert_1\ce\int_G |f(w)|\dd\mu_O(w), \quad \forall\, f\in \mathcal{H}(G,O)_\C.$$
The uniform closure $\overline{\mathcal{H}}(G,\mathcal O)$ of $\phi(\mathcal{H}(G,\mathcal O)_\C^{op})$ in $\mathcal{B}(G,\mathcal O)$, endowed with the standard operator norm, is a $C^*$-subalgebra of $\mathcal{B}(G,\mathcal O)$. Such an operator algebra is called the \emph{$C^*$-Hecke algebra associated to the Hecke pair $(G,\caO)$}.

Since the weak operator topology is coarser than the uniform operator topology, the map $\phi$ in \eqref{eq:phi} remains continuous if $\mathcal{B}(G,\caO)$ carries the weak operator topology. Denote by
$W(G,\caO)$ the $W^*$-algebra generated by $\phi(\mathcal{H}(G,\caO)_\C^{op})$ (cf.~$\S$\ref{ss:w*}) and call it the \emph{$W^*$-algebra 
associated to the Hecke pair $(G,\caO)$}.
\subsection{The trace map on $M_n(W(G,\caO))$} 
Let $G$ be a unimodular t.d.l.c.~group.
In {\cite[\S3.7]{ccw:euler}}, for every $\caO\in\CO(G)$ the authors defined a $\mathbb{C}$-valued trace map on the $C^*$-Hecke algebra $\overline{\mathcal{H}}(G,\caO)$.
 Here we extend such a map to the weak closure of $\mathcal{H}(G,\caO)_\C$.

\begin{prop}\label{prop: TrW} 
The $\C$-linear map
$$\tau_0\colon\phi(\mathcal{H}(G,\caO)_\C^{op})\to\C,\quad \tau_0(\phi(f))=f(1),$$
can be extended to the $\C$-linear map
$\tau\colon W(G,\caO)\longrightarrow \C$ given by 
$$\tau(F)=\langle F([1\dbr), [1\dbr\rangle\quad\text{for  } F\in W(G,\caO),$$
which is a positive and faithful trace  with $\tau(F^*)=\overline{\tau(F)}$ for all $F\in W(G,O)$.
\end{prop}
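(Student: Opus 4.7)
The plan is to verify the formula $\tau(F) := \langle F([1\dbr),[1\dbr\rangle$ has the claimed properties in four steps: it extends $f\mapsto f(1)$ on the Hecke algebra; it is $\C$-linear, positive, and $\ast$-compatible; it is faithful; and it is tracial. Only the last is nontrivial. For the extension, since $[1\dbr = I_\caO$ and $\mu_\caO(\caO)=1$, the bi-$\caO$-invariance of $f\in\mathcal{H}(G,\caO)_\C$ gives $\phi(f)([1\dbr) = [1\dbr\ast_{\mu_\caO} f = f$, hence
\begin{equation*}
\tau(\phi(f)) \;=\; \langle f,[1\dbr\rangle \;=\; \int_\caO f(w)\,d\mu_\caO(w) \;=\; f(1).
\end{equation*}
$\C$-linearity is transparent from the formula; positivity is $\tau(F^*F) = \|F([1\dbr)\|_2^2 \geq 0$; and $\tau(F^*) = \overline{\tau(F)}$ is sesquilinearity of $\langle\cdot,\cdot\rangle$.

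For faithfulness, the crucial input is that every $F\in W(G,\caO)\subseteq\mathcal{B}(G,\caO)$ commutes with the left $G$-action. If $\tau(F^*F) = \|F([1\dbr)\|_2^2 = 0$, then $F([1\dbr) = 0$, and so for every $g\in G$ one has $F([g\dbr) = F(g.[1\dbr) = g.F([1\dbr) = 0$. Since $\{[g\dbr:g\in\mathcal R\}$ is a Hilbert basis of $L^2(G,\C)^\caO$, this forces $F=0$, and in particular $F^*F=0$.

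The trace property is the substantive step. On the Hecke subalgebra $\phi(\mathcal{H}(G,\caO)_\C^{\op})$ it reduces to a direct convolution computation: as $\phi$ is an anti-homomorphism one has $\phi(f_1)\phi(f_2) = \phi(f_2\ast f_1)$, whence
\begin{equation*}
\tau(\phi(f_1)\phi(f_2)) \;=\; (f_2\ast f_1)(1) \;=\; \int_G f_2(w)\,f_1(w^{-1})\,d\mu_\caO(w),
\end{equation*}
and the change of variables $w\mapsto w^{-1}$, which preserves $\mu_\caO$ precisely because $G$ is unimodular, converts this into $(f_1\ast f_2)(1) = \tau(\phi(f_2)\phi(f_1))$. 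To extend tracialness from $\phi(\mathcal{H}(G,\caO)_\C^{\op})$ — which is WOT-dense in $W(G,\caO)$ by definition — to all of $W(G,\caO)$, I would exploit that for any $F\in W(G,\caO)$ the two maps $A\mapsto\tau(FA) = \langle A([1\dbr), F^*([1\dbr)\rangle$ and $A\mapsto\tau(AF) = \langle A(F([1\dbr)),[1\dbr\rangle$ are WOT-continuous on $W(G,\caO)$; first fixing $F$ in the Hecke subalgebra and varying $A$ yields $\tau(FA) = \tau(AF)$ for every $A\in W(G,\caO)$, and then fixing such an $A$ and running the same WOT-continuity argument in the first variable extends the identity to arbitrary $F\in W(G,\caO)$. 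The main obstacle is precisely this two-step density passage: joint WOT-continuity of multiplication fails, so the extension from the Hecke subalgebra to the full $W^*$-algebra must be performed one variable at a time.
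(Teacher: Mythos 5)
Your proposal is correct and follows essentially the same route as the paper: compute $\tau(\phi(f))=f(1)$ directly, get positivity from $\tau(F^*F)=\nor F([1\dbr)\nor_2^2$, get faithfulness from $G$-equivariance plus the Hilbert basis $\{[g\dbr\}$, and extend the trace identity from the Hecke subalgebra to the weak closure by separate WOT-continuity of multiplication. The only (harmless) differences are that you verify the trace identity on the Hecke algebra by the explicit convolution/unimodularity computation where the paper cites the end of the proof of \cite[Theorem 3.9]{ccw:euler}, and you spell out the one-variable-at-a-time density passage that the paper only sketches.
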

\begin{proof} Clearly, the map $\tau$ is $\C$-linear.
Moreover, let $\mathcal R$ denote a set of representatives of $\caO\backslash G/\caO$ and $f=\sum_{r\in\mathcal R}f(r)\dbl r\dbr\in\mathcal{H}(G,\caO)_\C$, where $f$ vanishes for all but finitely many $r$ (cf.~\eqref{eq:hgo}). Then, one computes $\tau(\phi(f))$ as
\begin{alignat}{2}\label{eq:tau hgo}
\langle [1\dbr\ast_{\mu_\caO}f, [1\dbr\rangle & =  \sum_{r\in\mathcal R}f(r)\langle [1\dbr\ast_{\mu_\caO}\dbl r\dbr,[1\dbr\rangle = \sum_{r\in\mathcal R}f(r)\langle \dbl r\dbr,[1\dbr\rangle\\
 & =  \sum_{r\in\mathcal R}f(r)\int_G I_{\caO r\caO}(w)I_ {\caO}(w)\dd\mu_\caO(w)\notag\\
 &=f(1)\int_G (I_ {\caO}(w))^2\dd\mu_\caO(w)
 =f(1)\mu_{\caO}(\caO)=f(1). \notag
\end{alignat}
Hence, $\tau(\phi(f))=\tau_0(\phi(f))$ for every $f\in \mathcal{H}(G,\caO)_\C$. In general, the multiplication map in $W(G,\caO)$ is not continuous w.r.t.~the weak topology as a function of two variables. However, it is continuous as a function of one variable when the other is held fixed. Then, for every fixed $F\in W(G,\caO)$ and every sequence $\{G_n\}_{n\geq 1}$ in $\phi(\mathcal{H}(G,\caO)_\C^{op})$ which is weakly convergent to $G\in W(G,\caO)$, the sequence $\tau(F G_n)-\tau(G_n F)$ converges to $\tau(F G)-\tau(G F)$. Hence, to prove that $\tau$ satisfies the trace property on $W(G,\caO)$, it suffices to verify that $\tau(F_1F_2)=\tau(F_2F_1)$ for arbitrary elements $F_1, F_2$ of a basis of $\phi(\mathcal{H}(G,\caO)_\C^{op})$. 
More precisely, it suffices to show that $\langle [1\dbr \ast_{\mu_\caO} \dbl g\dbr, [1\dbr\ast_{\mu_{\caO}}\dbl h\dbr^* \rangle=\langle [1\dbr\ast_{\mu_{\caO}}\dbl h\dbr, [1\dbr\ast_{\mu_{\caO}}\dbl g\dbr^*\rangle$ for all $g,h\in G$, and the end of the proof of \cite[Theorem~3.10]{ccw:euler} can be transferred here verbatim. 

For an arbitrary $F\in W(G,\caO)$, one has
\begin{equation}\label{eq:t faith}
    \tau(F^*F)=\langle\,F^*F([1\dbr), [1\dbr\,\rangle=\langle\,F([1\dbr), F([1\dbr)\,\rangle\geq 0
    \end{equation}
and $\tau$ is clearly positive. Moreover, if $\tau(F^*F)$ vanishes, then $F([1\dbr)=0$ (cf.~\eqref{eq:t faith}) and, since $F$ commutes with the left $G$-action, $F([g\dbr)=0$ for every $g\in G$. By linearity, $F$ vanishes over $C_c(G,\C)^\caO$ and then, by \cite[Fact 3.3]{ccw:euler}, $F=0$. Hence, $\tau$ is a faithful trace on $W(G,\caO)$.

Finally, for every $F\in W(G,\caO)$ one has
  $$\tau(F^*)=\langle\, F^*([1\dbr), [1\dbr\,\rangle=\langle\, [1\dbr,F([1\dbr)\,\rangle=\overline{\langle\, F([1\dbr), [1\dbr\,\rangle}=\overline{\tau(F)}.$$
\end{proof}
The following fact is analogous to~\cite[Fact~2.2]{ccw:euler}.
\begin{cor}\label{cor: TrW^n}
 The map
 $$\bar\tau\colon M_n(W(G,\caO))\to\C,\quad [F_{ij}]\mapsto \sum_{i=1}^n \tau(F_{ii}),$$
 is a positive and faithful trace over the  $W^*$-algebra $M_n(W(G,\caO))$ satisfying $\bar\tau(F^*)=\overline{\bar\tau(F)}$, for every $F\in M_n(W(G,\caO))$.
\end{cor}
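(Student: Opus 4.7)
The plan is to deduce every claimed property of $\bar\tau$ from the corresponding property of $\tau$ established in Proposition~\ref{prop: TrW}, using only the elementary matrix-level identities for the multiplication and involution in $M_n(W(G,\caO))$. Since the discussion in $\S$\ref{ss:w*} already shows that $M_n(W(G,\caO))$ is a $W^*$-algebra with involution $[F_{ij}]^*=[F_{ji}^*]$, the only task is a routine bookkeeping with indices; no analytic issue arises.

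First, $\C$-linearity of $\bar\tau$ is immediate from the $\C$-linearity of $\tau$ and of each coordinate projection $[F_{ij}]\mapsto F_{ii}$. For the trace property, I would write, for $F=[F_{ij}]$ and $H=[H_{ij}]$ in $M_n(W(G,\caO))$,
\[
\bar\tau(FH)=\sum_{i=1}^n\tau\!\Big(\sum_{k=1}^n F_{ik}H_{ki}\Big)=\sum_{i,k}\tau(F_{ik}H_{ki})=\sum_{i,k}\tau(H_{ki}F_{ik})=\bar\tau(HF),
\]
where in the third equality I invoke the trace property of $\tau$ on each summand, and in the last step I swap the order of summation and recognize $(HF)_{kk}=\sum_i H_{ki}F_{ik}$.

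Next, for positivity and faithfulness, the key observation is $(F^*F)_{ii}=\sum_{k=1}^n F_{ki}^*F_{ki}$. Positivity of $\tau$ (Proposition~\ref{prop: TrW}) gives $\tau(F_{ki}^*F_{ki})\geq 0$ for every pair $(k,i)$, hence
\[
\bar\tau(F^*F)=\sum_{i,k}\tau(F_{ki}^*F_{ki})\geq 0.
\]
If moreover $\bar\tau(F^*F)=0$, then every summand vanishes, and the faithfulness of $\tau$ forces $F_{ki}=0$ for all $k,i$, i.e.\ $F=0$; this yields faithfulness of $\bar\tau$.

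Finally, for the compatibility with the adjoint, I use $(F^*)_{ii}=F_{ii}^*$ together with $\tau(F_{ii}^*)=\overline{\tau(F_{ii})}$ from Proposition~\ref{prop: TrW} to get
\[
\bar\tau(F^*)=\sum_{i=1}^n\tau(F_{ii}^*)=\sum_{i=1}^n\overline{\tau(F_{ii})}=\overline{\bar\tau(F)}.
\]
There is essentially no obstacle here; the statement is a standard amplification from the trace on $W(G,\caO)$ to its matrix algebra, and the only care needed is to use the right formulas for matrix multiplication and for the entries of $F^*F$, both of which are dictated by the definition of $M_n(W(G,\caO))$ as an operator algebra on $(L^2(G,\C)^\caO)^n$.
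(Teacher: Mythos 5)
Your proof is correct and is exactly the argument the paper intends: the corollary is stated without proof precisely because it is the standard amplification of the trace $\tau$ of Proposition~\ref{prop: TrW} to $M_n(W(G,\caO))$ via $\tau\otimes\mathrm{id}_n$, as prepared in $\S$\ref{ss:w*}. The only microscopic step left implicit is that faithfulness of $\tau$ gives $F_{ki}^*F_{ki}=0$, from which $F_{ki}=0$ follows by the $C^*$-identity $\nor F_{ki}\nor^2=\nor F_{ki}^*F_{ki}\nor$; this is routine and does not affect correctness.
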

\begin{lem}\label{lem: DiseqTr}
For every $F\in M_n(W(G,\caO))$, one has
$$|\bar\tau(F)|\leq n\cdot\nor F\nor,$$
where $ \nor F\nor=\max\{\nor F(v)\nor_2:\, v\in (L^2(G,\C)^\caO)^n, \nor v\nor_2=1\}$ and $\nor v\nor_2\ce\sqrt{\sum_{i=1}^n \nor v_i\nor_2^2}$, for every $v=[v_1,...,v_n]^t\in (L^2(G,\C)^\caO)^n$.
\end{lem}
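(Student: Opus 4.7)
The plan is to reduce the bound on $\bar{\tau}(F)$ to a bound on each diagonal trace $\tau(F_{ii})$ by the triangle inequality, then use Cauchy--Schwarz together with the action of $F$ on a suitable unit vector in $(L^2(G,\C)^\caO)^n$.

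First I would observe that, by the definition of $\bar{\tau}$ in Corollary~\ref{cor: TrW^n} and the triangle inequality,
\[
|\bar{\tau}(F)| \leq \sum_{i=1}^n |\tau(F_{ii})|,
\]
so it suffices to prove $|\tau(F_{ii})|\leq \nor F\nor$ for each diagonal entry $F_{ii}\in W(G,\caO)$.

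Next I would use Proposition~\ref{prop: TrW}, which identifies $\tau(F_{ii}) = \langle F_{ii}([1\dbr),[1\dbr\rangle$. Since $\mu_{\caO}(\caO)=1$ and $[1\dbr=I_{\caO}$, a direct computation of the inner product shows $\nor[1\dbr\nor_2 = 1$. The Cauchy--Schwarz inequality then yields
\[
|\tau(F_{ii})| = \bigl|\langle F_{ii}([1\dbr),[1\dbr\rangle\bigr| \leq \nor F_{ii}([1\dbr)\nor_2 \cdot \nor[1\dbr\nor_2 = \nor F_{ii}([1\dbr)\nor_2.
\]

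Finally, to bound $\nor F_{ii}([1\dbr)\nor_2$ by $\nor F\nor$, I would introduce the standard basis vector $e_i := [0,\ldots,[1\dbr,\ldots,0]^t \in (L^2(G,\C)^\caO)^n$ with $[1\dbr$ in the $i$-th position, which satisfies $\nor e_i\nor_2 = 1$. Since $F$ acts by matrix multiplication,
\[
F(e_i) = \bigl[F_{1i}([1\dbr),\, F_{2i}([1\dbr),\,\ldots,\, F_{ni}([1\dbr)\bigr]^t,
\]
hence
\[
\nor F_{ii}([1\dbr)\nor_2^2 \leq \sum_{j=1}^n \nor F_{ji}([1\dbr)\nor_2^2 = \nor F(e_i)\nor_2^2 \leq \nor F\nor^2.
\]
Combining the three displays gives $|\bar{\tau}(F)|\leq \sum_{i=1}^n \nor F\nor = n\cdot\nor F\nor$, as required.

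There is no genuine obstacle here; the only subtlety is verifying that $\nor[1\dbr\nor_2=1$, which comes from the normalization $\mu_\caO(\caO)=1$ of the chosen Haar measure, and being careful that Cauchy--Schwarz is applied in $L^2(G,\C)^{\caO}$ rather than in $(L^2(G,\C)^\caO)^n$. The factor $n$ in the bound is optimal and arises simply from counting diagonal entries, so one should not expect to improve it without additional structural hypotheses on $F$.
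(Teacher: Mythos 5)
Your proof is correct and follows essentially the same route as the paper's: the triangle inequality reduces to the diagonal entries, Cauchy--Schwarz in $L^2(G,\C)^\caO$ bounds $|\tau(F_{ii})|$ by $\nor F_{ii}([1\dbr)\nor_2$, and the unit vector with $[1\dbr$ in the $i$-th slot (the paper's $[1\dbr_j$) gives $\nor F_{ii}([1\dbr)\nor_2\leq\nor F\nor$. No differences worth noting.
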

\begin{proof}
By Corollary~\ref{cor: TrW^n} and the Cauchy--Schwarz inequality in $L^2(G,\C)^\caO$,
$$|\bar\tau(F)|\leq \sum_{j=1}^n |\tau(F_{jj})|=\sum_{j=1}^n|\langle \,F_{jj}([1\dbr), [1\dbr\,\rangle|\leq \sum_{j=1}^n\nor F_{jj}([1\dbr)\nor_2.$$
For $j=1,\ldots,n$, let $[1\dbr_j$ be the vector in the $n$-fold of $L^2(G,\C)^\caO$ having the $i^{th}$ component equals to $[1\dbr\cdot\delta_{ij}$ for every $i=1,\dots,n$. Since $\nor{[1\dbr_j}\nor_2=1$, one has
$$\nor F\nor\geq \nor F([1\dbr_j)\nor_2=\sqrt{\sum_{i=1}^n \nor F_{ij}([1\dbr)\nor_2^2}\geq \nor F_{jj}([1\dbr)\nor_2.$$
As a conclusion,
$\displaystyle |\bar\tau(F)|\leq\sum_{j=1}^n\nor F_{jj}([1\dbr)\nor_2\leq n\cdot \nor F\nor.$
\end{proof}
From now on a self-adjoint idempotent in a $W^\ast$-algebra is called a \emph{projection}.
\begin{thm}\label{thm: Kapl}
For every idempotent $e\in M_n(W(G,\caO))$, there exists a projection $p\in M_n(W(G,\caO))$ such that $e M_n(W(G,\caO))=p M_n(W(G,\caO))$
and $\bar\tau(e)=\bar\tau(p)$.
\end{thm}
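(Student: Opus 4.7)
The plan is to exhibit $p$ via an explicit Kaplansky-type construction, and then derive the trace equality from the algebraic equivalence of $e$ with $p$. Throughout I write $A := M_n(W(G,\caO))$, which by $\S$\ref{ss:w*} is a $W^*$-algebra, equipped with the faithful positive trace $\bar\tau$ of Corollary~\ref{cor: TrW^n}.

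First I would introduce the self-adjoint auxiliary element
$$h \;:=\; 1 + (e-e^*)^*(e-e^*) \;=\; (e+e^*-1)^2 \;\in\; A.$$
Since $(e-e^*)^*(e-e^*)\geq 0$, one has $h \geq 1$, so continuous functional calculus applied to the bounded self-adjoint element $h$ (whose spectrum lies in $[1,\nor h\nor]$) produces the inverse $h^{-1} \in A$. This is the one step that genuinely uses the $W^*$-structure of $A$, rather than merely its abstract algebra structure. A direct expansion using only $e^2 = e$ (and its adjoint) then yields the two key commutation identities
$$eh \;=\; he \;=\; ee^*e \qquad \text{and} \qquad e^*h \;=\; he^* \;=\; e^*ee^*.$$
In particular $h$, and hence $h^{-1}$, commutes with $ee^*$.

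Next I would define $p := ee^* h^{-1}$ and check that $p$ is a projection. Self-adjointness follows immediately since $p^* = h^{-1}ee^* = ee^*h^{-1} = p$, and idempotency reduces to the identity $(ee^*)^2 = h\cdot ee^*$, itself a short expansion using $e^2 = e$: namely, $p^2 = (ee^*)^2 h^{-2} = (h\, ee^*)h^{-2} = ee^*h^{-1} = p$. The identities $ep = e^2 e^* h^{-1} = p$ and $pe = ee^*e\, h^{-1} = (eh)h^{-1} = e$ then show simultaneously that $p = ep \in eA$ and $e = pe \in pA$, giving $pA = eA$ as required.

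Finally, the trace equality is obtained via Murray–von Neumann-type equivalence: $\bar\tau(e) = \bar\tau(pe) = \bar\tau(ep) = \bar\tau(p)$, invoking the trace property $\bar\tau(xy) = \bar\tau(yx)$ of Corollary~\ref{cor: TrW^n}. The main obstacle I expect is ensuring that $h^{-1}$ genuinely lies in $A$ and not merely in the ambient operator algebra $\mathcal{B}((L^2(G,\C)^\caO)^n)$; once that is secured, the remaining verifications are elementary algebraic manipulations in $e$ and $e^*$.
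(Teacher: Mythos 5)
Your proof is correct and follows essentially the same route as the paper: the projection you construct, $p=ee^*h^{-1}$ with $h=1+(e-e^*)^*(e-e^*)=1+(e-e^*)(e^*-e)$, is literally the element the paper takes from Kaplansky's Theorem~26, and the trace equality is derived identically from $p=ep$, $e=pe$ and the trace property of $\bar\tau$. The only difference is that you verify the algebraic identities directly rather than citing Kaplansky, which is a perfectly sound (and self-contained) substitute.
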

\begin{proof} 
(cf. \cite[Theorem~26]{kap}). For $p=ee^*(1+(e-e^*)(e^*-e))^{-1}$ one has $p=ep$ and $e=pe$.
  As $\bar\tau$ is a trace, $\bar\tau(p)=\bar\tau(ep)=\bar\tau(pe)=\bar\tau(e).$
\end{proof}
\begin{cor}\label{cor: Kapl}
For every idempotent $e\in M_n(W(G,\caO))$,
$\bar\tau(e)\in [0,n]$. Moreover, $\bar\tau(e)=0$ if and only if $e=0$.
\end{cor}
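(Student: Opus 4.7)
The plan is to reduce everything to projections via Theorem~\ref{thm: Kapl}, and then exploit the positivity and faithfulness of $\bar\tau$ established in Proposition~\ref{prop: TrW} and Corollary~\ref{cor: TrW^n}. Given an idempotent $e \in M_n(W(G,\caO))$, Theorem~\ref{thm: Kapl} produces a projection $p$ with $\bar\tau(e) = \bar\tau(p)$, so it suffices to prove the statement for projections.

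For the lower bound, I would use that a projection $p$ satisfies $p = p^* = p^2 = p^*p$, hence by positivity of $\bar\tau$ we get $\bar\tau(p) = \bar\tau(p^*p) \geq 0$. For the upper bound, I would observe that $1 - p$ is again a projection in $M_n(W(G,\caO))$ (straightforward: $(1-p)^* = 1-p$ and $(1-p)^2 = 1-p$). Therefore $\bar\tau(1 - p) \geq 0$, which gives $\bar\tau(p) \leq \bar\tau(I_n)$. A direct computation shows $\bar\tau(I_n) = n \cdot \tau(\mathrm{id}) = n \cdot \langle [1\dbr, [1\dbr \rangle = n \cdot \mu_\caO(\caO) = n$, as already recorded inside the proof of Proposition~\ref{prop: TrW}.

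For the faithfulness statement, if $\bar\tau(e) = 0$, then $\bar\tau(p) = 0$, i.e.\ $\bar\tau(p^*p) = 0$. By faithfulness of $\bar\tau$ (Corollary~\ref{cor: TrW^n}) this forces $p^*p = 0$, and hence $p = 0$. Then the right ideal $e M_n(W(G,\caO)) = p M_n(W(G,\caO))$ is zero, and since $e = e \cdot 1$ lies in this ideal, we conclude $e = 0$. The converse is obvious.

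I do not expect any serious obstacle here: Theorem~\ref{thm: Kapl} does all the heavy lifting by replacing idempotents with projections, and the rest is just the standard comparison argument using that the unit splits as $1 = p + (1-p)$ into two projections whose traces are nonnegative. The one small point to be careful about is verifying $\bar\tau(I_n) = n$, which however is immediate from the normalisation $\mu_\caO(\caO) = 1$ chosen in \eqref{eq:tau hgo}.
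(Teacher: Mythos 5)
Your proposal is correct and follows essentially the same route as the paper: reduce to a projection via Theorem~\ref{thm: Kapl}, get $\bar\tau(p)=\bar\tau(p^*p)\geq 0$ from positivity, bound above by applying the same to the complementary idempotent, and use faithfulness for the vanishing statement. The only cosmetic differences are that the paper makes the positivity explicit through the entrywise computation $\bar\tau(p)=\sum_{i,j}\Vert p_{ij}([1\dbr)\Vert_2^2$ (recorded again in Remark~\ref{rem: TrProj1}(a)) and works with the idempotent $1-e$ rather than the projection $1-p$.
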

\begin{proof}
By Theorem \ref{thm: Kapl}, for every idempotent $e\in M_n(W(G,\caO))$ there exists a projection $p\in M_n(W(G,\caO))$ such that $\bar\tau(e)=\bar\tau(p)=\bar\tau(p^*p)$. Then,
\begin{eqnarray*}
\bar\tau(e)  =& \displaystyle \sum_{1\leq j\leq n}\tau((p^*p)_{jj}) & =  \sum_{1\leq i,j\leq n}\tau((p^*)_{ji}p_{ij})\\
                      =&  \displaystyle \sum_{1\leq i,j\leq n}\tau(p_{ij}^*p_{ij})
                                 & =  \sum_{1\leq i,j\leq n}\langle\, p_{ij}([1\dbr), p_{ij}([1\dbr)\,\rangle.
\end{eqnarray*} 
 Hence $\bar\tau(e)\geq 0$ and $\tau(e)=0$  if, and only if, $p_{ij}=0$ for every $1\leq i,j\leq n$. As a consequence, $\bar\tau(e)=0$ if, and only if, $0=p M_n(W(G,\caO))=e M_n(W(G,\caO)),$
which is in turn equivalent to the condition that $e=0$.

To prove that $\bar\tau(e)\leq n$, it suffices to observe that $1-e$ is an idempotent of $M_n(W(G,\caO))$ and so
$0\leq\bar\tau(1-e)=n-\bar\tau(e).$
\end{proof}
\begin{rem}\label{rem: TrProj1} We collect some properties that will be useful later on.
\begin{itemize}
\item[(a)] From the proof of Corollary \ref{cor: Kapl}, it follows that
$$\bar\tau(p)=\sum_{1\leq i,j\leq n}\nor p_{ij}([1\dbr)\nor_2^2,$$
for every projection $p\in M_n(W(G,\caO))$.
\item[(b)] Let $p,q\in M_n(W(G,\caO))$ be projections. By \cite[Ch.~1, Proposition~1]{ber:baer}, one has
$$p M_n(W(G,\caO))\subseteq q M_n(W(G,\caO))\Longleftrightarrow p=qp.$$
Consequently, if $p M_n(W(G,\caO))\subseteq q M_n(W(G,\caO))$ then $\bar\tau(p)\leq \bar\tau(q)$. 
Indeed, since $p=qp$, one obtains that
$$pq=p^*q^*=(qp)^*=p^*=p=qp$$ 
and thus $q-p$ is a projection. By Corollary~\ref{cor: Kapl}, one concludes that
\begin{equation*}\label{eq: TrPrj}
\bar\tau(q)-\bar\tau(p)=\bar\tau(q-p)\geq 0.
\end{equation*}
\end{itemize}
\end{rem}
\section{Euler--Poincaré characteristic of unimodular t.d.l.c.~groups with $\ccd_\Q(G)\leq 1$}
\label{s:euchar}
\subsection{The Hattori--Stallings rank of discrete $\Q[G]$-modules of type $\FP$}
\label{ss:hs}
Let $G$ be a unimodular t.d.l.c.~group and $\mathcal{O}\in\CO(G)$. Denote by $\boh(G)$ the $\Q$-vector space $\Q\cdot\mu_{\mathcal O}$. Note that $\boh(G)$ consists of the rational multiples of the Haar measures on $G$ that take rational values on compact open subgroups, so $\boh(G)$ does not depend on the choice of $\mathcal{O}\in\CO(G)$.
For every $P\in \mathrm{ob}(\QGdis)$ finitely generated and projective, its \textit{Hattori--Stallings rank} $\tilde{\rho}(P)$ is defined as
$$\tilde\rho(P)\ce\rho_P(\iid_P)\in\boh(G),$$
where $\rho_P$ is a suitable map from $\mathrm{End}_G(P)$ to $\boh(G)$ defined in \cite[\S4.3]{ccw:euler}. For instance,  $\Q[G/\mathcal{O}]$ has Hattori--Stallings rank equal to $\mu_{\mathcal O}\in\boh(G)$ for every compact open subgroup $\caO$ of $G$. 
\begin{fact}[\protect{See \cite[Proof of Theorem~4.4]{ccw:euler}}]
\label{fact: EqTraces}
Let $G$ be a unimodular t.d.l.c.~group, and 
let $P\in\mathrm{ob}(\QGdis)$ be finitely generated and projective. Then there exist $n\in\Z_{\geq 1}$, $\caO\in\CO(G)$ and $e\in M_n(\mathcal{H}(G,\caO)_\Q)$ idempotent such that 
$P\simeq e\cdot\Q[G/\caO]^n$ and therefore
\begin{equation}\label{eq: rhoTilde}
\tilde{\rho}(P)=\bar{\tau}(e)\cdot\mu_\caO.
\end{equation}
\end{fact}
It is possible to extend the definition of the Hattori--Stallings rank
to every $M\in\mbox{ob}(\QGdis)$ of type $\FP$ as follows. Let
$P_0,...,P_n\in \mathrm{ob}(\QGdis)$ be finitely generated projective  such that 
$$
\xymatrix{0\ar[r]&P_n\ar[r]&\ldots\ar[r]&P_1\ar[r]&P_0\ar[r]&M\ar[r]&0}
$$
is a projective resolution of $M$ in $\QGdis$ (cf.~\S\ref{s:Rat}). Then,
one may define
\begin{equation}\label{eq: HSFP}
    \tilde{\rho}(M)\ce\sum_{i=0}^{n}(-1)^i\tilde{\rho}(P_i).
\end{equation}
By the additivity of $\tilde{\rho}$ (cf.~\cite[Equation~(5.1)]{ccw:euler}) and \cite[Ch.VIII, Lemma~4.4]{bro:coho}, the definition in \eqref{eq: HSFP} is independent of the choice of the projective resolution of $M$.
 The value $\tilde{\rho}(M)$ is sometimes called the \emph{Lefschetz number} of the module $M$.
From the Horseshoe Lemma one concludes the following property.
\begin{prop}\label{prop: addHS}
Let $0 \to A\to  B \to C\to 0$ be a short exact sequence in $\QGdis$, and assume that $A$ and $C$ are of type $\FP$. Then, so is B and
$\tilde{\rho}(B)=\tilde{\rho}(A)+\tilde{\rho}(C).$
\end{prop}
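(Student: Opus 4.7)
The plan is to apply Horseshoe's Lemma in the abelian category $\QGdis$ to produce a finite projective resolution of $B$ out of given finite projective resolutions of $A$ and $C$, and then compute the Hattori--Stallings rank term-by-term using its additivity on finite direct sums of finitely generated projectives.

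More precisely, since $A$ and $C$ are of type $\FP$, choose finite projective resolutions
\[
0\to P_m\to\cdots\to P_0\to A\to 0\quad\text{and}\quad 0\to Q_n\to\cdots\to Q_0\to C\to 0
\]
in $\QGdis$ with each $P_i$, $Q_j$ finitely generated and projective. Padding with zeros we may assume $m=n$. Since $\QGdis$ is abelian with enough projectives (cf.\ Section~\ref{s: Rat}), the classical Horseshoe construction goes through verbatim: one builds inductively a projective resolution of $B$ of the form
\[
0\to P_n\oplus Q_n\to\cdots\to P_0\oplus Q_0\to B\to 0,
\]
fitting into a short exact sequence of chain complexes with the given resolutions of $A$ and $C$. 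In particular, $B$ is itself of type $\FP$, so $\tilde\rho(B)$ is defined via \eqref{eq: HSFP}.

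Next I would invoke additivity of $\tilde\rho$ on finite direct sums of finitely generated projective objects in $\QGdis$, namely $\tilde\rho(P_i\oplus Q_i)=\tilde\rho(P_i)+\tilde\rho(Q_i)$. This is immediate from the construction of $\tilde\rho$ via an idempotent $e\in M_n(\mathcal{H}(G,\caO)_\Q)$ (cf.\ Fact~\ref{fact: EqTraces}): an idempotent representing $P\oplus Q$ is the block-diagonal idempotent $e_P\oplus e_Q$ sitting inside a larger matrix algebra, and the trace $\bar\tau$ is additive on block-diagonals by Corollary~\ref{cor: TrW^n}. Alternating signs across the resolution of $B$ then gives
\[
\tilde\rho(B)=\sum_{i=0}^{n}(-1)^i\bigl(\tilde\rho(P_i)+\tilde\rho(Q_i)\bigr)=\tilde\rho(A)+\tilde\rho(C),
\]
as desired.

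The only delicate point is the independence of $\tilde\rho(M)$ from the chosen finite projective resolution, but this is precisely what allows us to use the Horseshoe resolution on the left-hand side; it follows from a standard Schanuel/comparison argument in $\QGdis$ combined with the additivity of $\tilde\rho$ on short exact sequences of finitely generated projectives, which is already established in \cite[Section~5]{ccw:euler}. Hence the main (and essentially only) technical input is the validity of Horseshoe's Lemma in $\QGdis$, which poses no obstacle since $\QGdis$ is an abelian category with enough projectives.
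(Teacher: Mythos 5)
Your proposal is correct and follows essentially the same route as the paper, which likewise derives the additivity from Horseshoe's Lemma in $\QGdis$ (valid since the category is abelian with enough projectives) together with the additivity of $\tilde\rho$ on finitely generated projectives established in \cite[Section~5]{ccw:euler}. The extra details you supply — the block-diagonal idempotent argument for $\tilde\rho(P\oplus Q)=\tilde\rho(P)+\tilde\rho(Q)$ and the remark on independence of the chosen resolution — are exactly the ingredients the paper leaves implicit.
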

In the case that $G$ is a unimodular t.d.l.c.~group of type $\FP$, the trivial module $\Q$ admits a finite projective resolution $P_\bullet\rightarrow\Q$ in $\QGdis$ and the Euler--Poincaré characteristic of $G$ can be defined as $$\tilde{\chi}_G=\sum_{i\geq 0}(-1)^i\tilde{\rho}(P_i),$$ that is the Lefschetz number $\tilde\rho(\Q)$ of the trivial discrete left $\QG$-module $\Q$ (cf.~\eqref{eq: HSFP}). 
 For instance, if $G$ is profinite then $\tilde{\chi}_G=1\cdot\mu_G$, where $\mu_G$ denotes the Haar measure of $G$ such that $\mu_G(G)=1$ (cf.~\cite[Remark~1.3]{ccw:euler}).

 \smallskip
{From now on, given an open subgroup $H$ of $G$, the standard Haar measure on $H$ will be the restriction of the given Haar measure $\mu$ on $G$.}

\begin{prop}[\protect{\cite[Proposition~2.2.2]{cast:phd}}]\label{prop: chiAH}
Let $G$ be a unimodular t.d.l.c.~group of type $\FP$. 
\begin{itemize}
    \item[(a)] If $G=X\ast_U Y$ for some open subgroups $X,Y,U$ of $G$ of type $\FP$, then 
   $$\tilde{\chi}_G=\tilde{\chi}_X+\tilde{\chi}_Y-\tilde{\chi}_U;$$
    
    \item[(b)] If $G=X\ast_U^t$ for some open subgroups $X,U$ of $G$ of type $\FP$, then 
    $$\tilde{\chi}_G=\tilde{\chi}_X-\tilde{\chi}_U.$$
\end{itemize}
{The equalities in (a)~and~(b) must be interpreted as equalities among the rational coefficients 
 of the given Euler--Poincar\'e characteristics, all computed with respect to the same compact open subgroup of~$U$.}
\end{prop}
\begin{proof} 
{In case~(a), as well as in case~(b), $G$ acts without inversions on the associated Bass--Serre tree. Therefore, the proof strategye similar and we only provide a proof in case~(b). Since $G=X\ast_U^t$, one has the following short exact sequence in $\QGdis$: 
$$0\to\Q[G/U]\to\Q[G/X]\to\Q\to 0.$$
By~\cite[Corollary~3.18]{ccc:finite}, the three modules before are of type~$\FP$. Therefore, by Proposition~\ref{prop: addHS}, $$\tilde\chi_G=\tilde\rho(\Q)=\tilde\rho(\Q[G/X])-\tilde\rho(\Q[G/U]).$$
Let $H$ be an arbitrary open subgroup of $G$ of type $\FP$ and consider on it the Haar measure~$\mu^H$ which arises as restriction of the Haar measure~$\mu$ on~$G$.
We claim that, for some (and hence all) $\caO\in\CO(H)$, there is  $q(\caO)\in\Q$ such that $$\tilde\chi_H=q(\caO)\cdot \mu^H_\caO\quad\text{and}\quad\tilde\rho(\Q[G/H])=q(\caO)\cdot\mu_\caO.$$
To compute $\tilde\rho(\Q[G/H])$ we only need a finite projective resolution of $\Q[G/H]$ in $\QGdis$. To build such a resolution we start by considering a  finite projective resolution $P_\bullet\to\Q$ of discrete $\Q[H]$-modules, which does exist since $H$ is of type $\FP$, and then we apply the induction functor $\QG\otimes_H\argu$ to $P_\bullet\to\Q$. Note that this works as $\QG\otimes_H\argu$ is exact and left adjoint to the restriction functor from $\QGdis$ to ${}_{\Q[H]}\mathrm{\textbf{dis}}$ and so it preserves projective objects (cf.~\cite[\S~2.4]{cw:qrat}). Therefore, it suffices to prove that the Hattori--Stallings rank behaves well under induction, i.e., for some (and hence all) $\caO\in\CO(H)$ there exists $q_P(\caO)\in\Q$ such that $\tilde\rho(P)=q_P(\caO)\cdot \mu^H_\caO$ and $\tilde\rho(\Q[G]\otimes_H P)=q_P(\caO)\cdot \mu_\caO$. 
It follows by means of an analogous argument to the one used in the proof of the claim in the abstract case (cf.~{\cite[Ch.~IX, Prop.~2.3]{bro:coho}}). In the t.d.l.c.~case the free modules must be replaced by rational discrete permutation modules with compact open stabilisers (cf.~\cite[\S 3.2]{cw:qrat}).}
\end{proof}
\subsection{Discrete left augmentation $\QG$-modules}
Let $G$ be a t.d.l.c.~group. For every $U\in\CO(G)$, the map
\begin{equation}
\eps_U\colon C_c(G,\Q)^U\to\Q,\quad f\mapsto\int_G f(\omega)\dd\mu_U(\omega)
\end{equation}
is a surjective morphism in $\QGdis$ mapping every  $[g\dbr$ to $1$. The kernel of $\eps_U$, denoted by
$N_U^G$, is called the {\it discrete left augmentation  $\QG$-module relative to $U$.} Note that $C_c(G,\Q)^U$ is isomorphic to $\Q[G/U]$ in $\QGdis$.
\begin{prop}\label{prop:Ngu}
Let $G$ be a t.d.l.c.~group and $U$ a compact open subgroup. 
\begin{enumerate}
    \item[(a)] If $N^G_U$ is finitely generated, there are $n\in\Z_{\geq 0}$ and $\{s_1,\ldots,s_n\}\subseteq G\setminus U$ such that the set $\{I_{s_iU}-I_U\mid 1\leq i\leq n\}$ generates $N^G_U$ as discrete left $\QG$-module. In particular, one has an epimorphism of discrete $\QG$-modules
    \begin{equation}\label{eq:pOU}
        \begin{array}{rccl}
           p_{\caO,U}\colon & (C_c(G,\Q)^\caO)^n & \longrightarrow & N^G_U\\
             & [0,\ldots, \stackrel{j\text{-th}}{I_{\caO}}, \ldots, 0]^t & \longmapsto & I_U-I_{s_jU}
        \end{array}
    \end{equation}
    from the $n$-fold of $C_c(G,\Q)^\caO$ to $N^G_U$, where the subgroup $\caO$ is equal to $U\cap s_1Us_1^{-1}\cap\ldots \cap s_nUs_n^{-1}$.
    \item[(b)] $G$ is compactly generated if, and only if, $N^G_U$ is finitely generated.
    \item[(c)] $N^G_U$ is projective if, and only if, $\ccd_\Q(G)\leq 1$.
\end{enumerate}
\end{prop}
\begin{proof}
    For (a) and (b) see \cite[Proposition~5.3]{cw:qrat}, for (c) see \cite[Lemma~3.6]{cw:qrat}.
\end{proof}
Consequently, if $G$ is compactly generated and $\caO\leq U$ are compact open subgroups, one defines the endomorphism  
\begin{equation}\label{eq:pi}
\pi\colon\xymatrix{(C_c(G,\Q)^\caO)^n\ar[r]^-{p_{\caO,U}}&N^G_U\leq C_c(G,\Q)^U\ar[r]^-{\eta_{U,\caO}}&C_c(G,\Q)^\caO\ar[r]^-{\xi}& (C_c(G,\Q)^\caO)^n},
\end{equation}
where $\eta_{U,\caO}\colon C_c(G,\Q)^U\to C_c(G,\Q)^\caO$
is the injective homomorphism given by 
$$\eta_{U,\caO}(I_{gU})=\frac{1}{|U:\caO|}\sum_{r\in U/\caO}I_{gr\caO}$$
(as defined in \cite[\S4.2]{cw:qrat}) and $\xi([g\dbr)=[[g\dbr,0,\ldots,0]^t$ embeds into the first component.
\begin{fact}\label{fact:Ngu}
{Let $G, \caO$ and $U$ as before.}
According to Proposition \ref{prop:Ngu}, the following properties hold:
\begin{itemize}
    \item[(i)] the image of $\pi$ is isomorphic to $N^G_U$ in $\QGdis$;
    \item[(ii)] if $\ccd_\Q(G)\leq1$, there exists a homomorphism in $\QGdis$
    $$\iota\colon\image(\pi)\to (C_c(G,\Q)^\caO)^n$$
    such that $\pi\iota=\mathrm{id}_{\image(\pi)}$;
    \item[(iii)] the element $\alpha\ce\iota\pi$
    satisfies $\alpha^2=\alpha$ and $\pi\alpha=\pi$.
\end{itemize}
\end{fact}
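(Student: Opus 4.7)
My plan is to derive all three parts of the fact directly from Proposition~\ref{prop:Ngu} together with an inspection of the explicit composition defining $\pi$.

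For part~(i), I would decompose $\pi = \xi \circ \eta_{U,\caO} \circ p_{\caO,U}$ and track what each factor contributes. By Proposition~\ref{prop:Ngu}(a) the map $p_{\caO,U}$ is already surjective onto $N^G_U$, so it remains only to check that the last two factors are injective on $N^G_U$. The map $\xi$ is the inclusion into the first coordinate of the $n$-fold sum and is visibly injective. For $\eta_{U,\caO}$, I would use the explicit formula expressing the image of a basis element $I_{gU}$ as a scalar multiple of a sum of indicator functions of the $\caO$-cosets inside $gU$: since distinct $U$-cosets decompose into disjoint collections of $\caO$-cosets, the corresponding images have pairwise disjoint supports in $C_c(G,\Q)^\caO$ and are therefore $\Q$-linearly independent. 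This shows $\eta_{U,\caO}$ is injective on the whole of $C_c(G,\Q)^U$, in particular on $N^G_U$, and hence $\mathrm{Im}(\pi) \simeq N^G_U$ in $\QGdis$.

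For part~(ii), once (i) is in hand, the assumption $\ccd_\Q(G) \le 1$ combined with Proposition~\ref{prop:Ngu}(c) ensures that $N^G_U \simeq \mathrm{Im}(\pi)$ is a projective object of $\QGdis$. Viewing $\pi$ as a surjection of $(C_c(G,\Q)^\caO)^n$ onto its image $\mathrm{Im}(\pi)$, projectivity of the target then supplies a $\QG$-linear section $\iota\colon \mathrm{Im}(\pi) \to (C_c(G,\Q)^\caO)^n$ with $\pi\iota = \mathrm{id}_{\mathrm{Im}(\pi)}$.

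Part~(iii) is then purely formal: one computes $\alpha^2 = \iota(\pi\iota)\pi = \iota\,\mathrm{id}_{\mathrm{Im}(\pi)}\,\pi = \iota\pi = \alpha$, and likewise $\pi\alpha = (\pi\iota)\pi = \pi$. I do not foresee a genuine obstacle anywhere; the only mildly technical step is the disjoint-support argument underpinning injectivity of $\eta_{U,\caO}$ in (i), and the rest is standard manipulation of a split surjection. The payoff is the idempotent endomorphism $\alpha$ of $(C_c(G,\Q)^\caO)^n$, which via Fact~\ref{fact: EndoH(G,O)nxn} translates into an idempotent matrix in $M_n(\mathcal{H}(G,\caO)_\Q)$ — the object whose trace, in the ensuing sections, will be connected to $\tilde{\chi}_G$ through \eqref{eq: rhoTilde}.
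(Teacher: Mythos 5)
Your argument is correct and takes essentially the same route as the paper, which simply records that (i) and (iii) follow by construction and that the section $\iota$ in (ii) exists because $\mathrm{Im}(\pi)\simeq N^G_U$ is projective in $\QGdis$ by Proposition~\ref{prop:Ngu}(c). Your disjoint-support verification of the injectivity of $\eta_{U,\caO}$ and the formal idempotent computation in (iii) merely spell out details the paper leaves implicit.
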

\begin{proof}
The only property which does not follow directly by construction is the existence of the morphism $\iota.$ It is instead a consequence of the fact that
 $\image(\pi)$ is projective in $\QGdis$ (cf.~Proposition~\ref{prop:Ngu}(c)).
\end{proof}
\begin{rem}\label{rem: PrelApprox}
By \S\ref{ss:hs}~and~Corollary~\ref{cor: Kapl}, 
one has $\tilde{\rho}(N_U^G)=\bar\tau(\alpha)\cdot\mu_\caO\geq 0$. 
\end{rem}
From now on, the maps $\pi$ and $\alpha$ will be identified with their images in the operator algebra $M_n(\phi(\mathcal{H}(G,\caO)_\Q^{op}))\subseteq M_n(W(G,\caO))$ (cf.~Fact~\ref{fact: EndoH(G,O)nxn}(b) and \eqref{eq:phi}). In particular, $\pi=[\pi_{ij}]$ is the matrix with entries in $\phi(\mathcal{H}(G,\caO)_\Q^{op})$ defined by
\begin{equation}\label{eq:piij}
    \pi_{ij}([g\dbr)\ce\begin{cases}
    \hfill 0\hfill &\text{if $i>1,$}\\ \displaystyle\frac{1}{|U:\caO|}\sum_{r\in U/\caO}([gr\dbr-[gs_jr\dbr)&\text{if $i=1$.}\\
    \end{cases}
\end{equation}
for every $g\in G$. Recall that $\{[g\dbr=I_{g\caO}\mid g\in G/\caO\}$ is an orthonormal basis of the Hilbert space $L^2(G,\C)^\caO$ (cf.~\eqref{eq:L2O}).
For simplicity, let $\pi_j\ce\pi_{1j}$ for every $1\leq j\leq n$.
The following fact is relevant for the proof of Proposition \ref{prop: Approx}. 
In particular, we introduce in item~(c) the matrix $e\in M_n(W(G,\caO))$ such that $M_n(W(G,\caO))e\pi=M_n(W(G,\caO))\pi$ and such that $\bar{\tau}(e\pi)$ produces a lower bound for the rank $\tilde{\rho}(N_U^G)$ which is independent of the number $n$ of generators of the discrete left $\QG$-module $N_U^G$ (cf.~\eqref{eq:approx}).
\begin{fact}\label{fact:Ngu analitic}
Using the notation from~\eqref{eq:pi}, the following properties hold:
\begin{itemize}
    \item[(a)] $\pi^*M_n(W(G,\caO))\subseteq\alpha^*M_n(W(G,\caO))$.
    \item[(b)] There exists a projection $p\in M_n(W(G,\caO))$ such that
    $$\alpha^*M_n(W(G,\caO))=pM_n(W(G,\caO))\quad\text{and}\quad \bar\tau(p)=\bar\tau(\alpha^*)=\bar\tau(\alpha).$$
    \item[(c)] If $e$ denotes the matrix such that $e_{ij}=1_{W(G,\caO)}$ for all $1\leq i,j\leq n$, then one has $n=|U:\caO|\cdot\bar\tau(e\pi)$.
\end{itemize}
\end{fact}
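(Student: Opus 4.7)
My plan is to treat the three parts as a single package built around the idempotent identity $\pi\alpha=\pi$ from Fact~\ref{fact:Ngu}(iii) together with the trace machinery of Section~\ref{s:W*}.

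For part (a), I would take adjoints in $\pi\alpha=\pi$ to obtain $\alpha^{*}\pi^{*}=(\pi\alpha)^{*}=\pi^{*}$. Then for every $m\in M_n(W(G,\caO))$ the factorization $\pi^{*}m=\alpha^{*}\pi^{*}m$ exhibits $\pi^{*}m$ as an element of $\alpha^{*}M_n(W(G,\caO))$, which gives the desired inclusion.

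For part (b), from $\alpha^{2}=\alpha$ one gets $(\alpha^{*})^{2}=\alpha^{*}$, so $\alpha^{*}$ is itself an idempotent of $M_n(W(G,\caO))$. Applying Theorem~\ref{thm: Kapl} to $\alpha^{*}$ produces a projection $p\in M_n(W(G,\caO))$ with $\alpha^{*}M_n(W(G,\caO))=pM_n(W(G,\caO))$ and $\bar\tau(p)=\bar\tau(\alpha^{*})$. Finally, Corollary~\ref{cor: TrW^n} gives $\bar\tau(\alpha^{*})=\overline{\bar\tau(\alpha)}$, while Corollary~\ref{cor: Kapl} forces $\bar\tau(\alpha)\in[0,n]\subset\R$ (since $\alpha$ is an idempotent). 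Combining these, $\bar\tau(\alpha)=\bar\tau(\alpha^{*})=\bar\tau(p)$, which is the remaining equality in (b).

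Part (c) is a direct computation. Since $e_{ik}=1_{W(G,\caO)}$ for all $i,k$ and $\pi_{kj}=0$ whenever $k>1$, matrix multiplication yields $(e\pi)_{ij}=\sum_{k=1}^{n}\pi_{kj}=\pi_{1j}=\pi_{j}$, so
$$\bar\tau(e\pi)=\sum_{i=1}^{n}\tau(\pi_{i})=\sum_{i=1}^{n}\langle\pi_{i}([1\dbr),[1\dbr\rangle.$$
Using the explicit formula \eqref{eq:piij} and the orthonormality of $\{[g\dbr\mid g\in G/\caO\}$ in $L^{2}(G,\C)^{\caO}$, I would argue that in the sum defining $\pi_{i}([1\dbr)$ only the representative $r=1\in\caO$ contributes to $\langle[r\dbr,[1\dbr\rangle$ (all other representatives of $U/\caO$ lie in $U\setminus\caO$ and are hence orthogonal to $[1\dbr$), while each $[s_{i}r\dbr$ contributes zero because $s_{i}\notin U\supseteq\caO$ and $r\in U$ force $s_{i}r\notin\caO$. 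This gives $\tau(\pi_{i})=1/|U:\caO|$ and hence $\bar\tau(e\pi)=n/|U:\caO|$, which rearranges to the stated identity.

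The only mild obstacle I anticipate is the reality check for $\bar\tau(\alpha)$ in part (b) (which is why the idempotency of $\alpha$ must be invoked via Corollary~\ref{cor: Kapl}); once this is handled, all remaining steps are formal consequences of Theorem~\ref{thm: Kapl}, the trace properties from Corollary~\ref{cor: TrW^n}, and the explicit basis description of $L^{2}(G,\C)^{\caO}$.
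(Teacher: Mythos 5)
Your proposal is correct and follows essentially the same route as the paper: part (a) is the adjoint of the paper's inclusion $M_n(W(G,\caO))\pi=M_n(W(G,\caO))\pi\alpha\subseteq M_n(W(G,\caO))\alpha$, part (b) is exactly the paper's application of Theorem~\ref{thm: Kapl} to the idempotent $\alpha^*$, and part (c) is the same diagonal computation of $\bar\tau(e\pi)$ via \eqref{eq:piij} and orthonormality of the $[g\dbr$. You in fact supply slightly more detail than the paper, notably the reality of $\bar\tau(\alpha)$ (via Corollary~\ref{cor: Kapl}) needed to pass from $\bar\tau(\alpha^*)=\overline{\bar\tau(\alpha)}$ to $\bar\tau(\alpha^*)=\bar\tau(\alpha)$, which the paper leaves implicit.
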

\begin{proof}
By Fact~\ref{fact:Ngu}(iii),  $M_n(W(G,\caO))\pi=M_n(W(G,\caO))\pi\alpha\subseteq M_n(W(G,\caO))\alpha$ and (a) holds. Again, Fact~\ref{fact:Ngu}(iii) implies that $\alpha^*$ is an idempotent. Then Theorem \ref{thm: Kapl} applies and, since $\bar{\tau}(\alpha)\in \Q$, (b) is proved. Finally, (c) follows from  $\bar\tau(e\pi)=\sum_{i=1}^n\bar\tau(\pi_i)$ and $\bar{\tau}(\pi_i)=\langle\pi_i([1\dbr),[1\dbr\rangle=1/|U:\caO|$. 
\end{proof}
\begin{prop}\label{prop: Approx}
Let $G$ be a compactly generated unimodular t.d.l.c.~group with $\ccd_\Q(G)\leq 1$. For every compact open subgroup $U\neq G$, one has that
\begin{equation}\label{eq:approx}
\tilde{\rho}(N_U^G)\geq \frac{1}{2}\mu_U.
\end{equation}
\end{prop}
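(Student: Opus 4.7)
The plan is to bound $\bar\tau(\alpha)$ from below inside the finite $W^*$-algebra $M_n(W(G,\caO))$ and then multiply by $\mu_\caO$, using the identity $\tilde\rho(N_U^G) = \bar\tau(\alpha)\cdot \mu_\caO$ from Remark~\ref{rem: PrelApprox}. I would proceed through three nested reductions, ending in an explicit $L^2$/$L^1$ estimate on the single Hecke element associated with $\pi_1$.

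\emph{Reduction 1 (from $\alpha$ to a range projection).} Fact~\ref{fact:Ngu analitic}(a)--(b) produce a projection $p \in M_n(W(G,\caO))$ with $\bar\tau(p) = \bar\tau(\alpha)$ and $\pi^* \in p M_n(W(G,\caO))$. Let $q \in M_n(W(G,\caO))$ be the range projection of $\pi^*$, available via polar decomposition in the finite $W^*$-algebra. Since $q$ is the smallest projection with $q\pi^* = \pi^*$ and $p$ also satisfies $p\pi^* = \pi^*$, one gets $q \leq p$ and hence $\bar\tau(q) \leq \bar\tau(\alpha)$ by Remark~\ref{rem: TrProj1}(b). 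The polar decomposition $\pi = u|\pi|$ and the trace property (Corollary~\ref{cor: TrW^n}) give $\bar\tau(q) = \bar\tau(u^*u) = \bar\tau(uu^*) = \bar\tau(\tilde q)$, where $\tilde q$ is the range projection of $\pi$.

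\emph{Reduction 2 (from $\tilde q$ to one matrix entry).} By \eqref{eq:piij}, $\pi$ has nonzero entries only in its first row, so $\image(\pi)$ lies in the first coordinate of $(L^2(G,\C)^\caO)^n$. Thus $\tilde q$ is block-diagonal with the only non-zero block $\tilde q_{11}\in W(G,\caO)$ in position $(1,1)$, and $\bar\tau(\tilde q) = \tau(\tilde q_{11})$. Restricting $\pi$ to inputs of the form $(v_1,0,\ldots,0)^t$ shows $\image(\pi_1) \subseteq \image(\tilde q_{11})$, hence $\tilde q_{11} \geq r_1$, where $r_1 \in W(G,\caO)$ is the range projection of $\pi_1$. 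In particular, $\bar\tau(\alpha) \geq \tau(r_1)$.

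\emph{Reduction 3 (estimate of $\tau(r_1)$).} The standard inequality $\pi_1\pi_1^* \leq \|\pi_1\|^2 r_1$ and the identity $\tau(\pi_1\pi_1^*) = \|\pi_1([1\dbr)\|_2^2$ reduce the problem to two explicit computations. From \eqref{eq:piij}, $\pi_1([1\dbr) = \frac{1}{|U:\caO|}(I_U - I_{s_1 U})$, and since $s_1 \notin U$ forces $U \cap s_1 U = \emptyset$, one finds $\|\pi_1([1\dbr)\|_2^2 = 2/|U:\caO|$. On the other hand, one checks that $f_1 := \frac{1}{|U:\caO|}(I_U - I_{s_1 U})$ is bi-$\caO$-invariant (using $\caO \subseteq U \cap s_1 U s_1^{-1}$) and that $\pi_1 = \phi(f_1)$; Young's inequality for convolution on the unimodular group $G$ then yields $\|\pi_1\| \leq \|f_1\|_1 = 2$. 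Combining, $\tau(r_1) \geq (2/|U:\caO|)/4 = 1/(2|U:\caO|)$.

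Assembling the chain, $\bar\tau(\alpha) \geq 1/(2|U:\caO|)$, so $\tilde\rho(N_U^G) \geq \mu_\caO/(2|U:\caO|) = \mu_U/2$, using that $\mu_U = \mu_\caO/|U:\caO|$ as both are Haar measures normalized to give $U$, respectively $\caO$, measure one. The main technical step I expect to need care is the operator-norm bound $\|\pi_1\| \leq 2$, which crucially exploits unimodularity through Young's inequality; everything else is a tight sequence of projection and trace inequalities already prepared in Section~\ref{s:W*} and the preceding facts of this section.
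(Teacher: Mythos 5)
Your proof is correct, but it takes a genuinely different route from the paper's at the key estimation step. The paper introduces the all-ones matrix $e$, computes $\bar\tau(e\pi)=n/|U:\caO|$, and invokes an approximation theorem for Baer $*$-rings (Berberian) to produce, for each $\epsilon>0$, a projection $f_\epsilon\in\pi^*e^*M_n(W(G,\caO))\subseteq pM_n(W(G,\caO))$ with $\nor e\pi-e\pi f_\epsilon\nor<\epsilon$; a Cauchy--Schwarz estimate, the $\ell^1$--$\ell^2$ comparison in $\R^{n^2}$ and Remark~\ref{rem: TrProj1}(a) then give $\bar\tau(\alpha)\geq\bar\tau(f_\epsilon)\geq\tfrac{|U:\caO|}{2}\bigl(\tfrac{1}{|U:\caO|}-\epsilon\bigr)^2$, and one lets $\epsilon\to 0$. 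You instead pass through the polar decomposition $\pi=u|\pi|$ to trade $\bar\tau(\alpha)=\bar\tau(p)$ for the trace of the range projection of $\pi$, localize to the single entry $\pi_1$ (only the first row of $\pi$ is nonzero, so the range projection sits in the $(1,1)$-corner), and finish with $\pi_1\pi_1^*\leq\nor\pi_1\nor^2 r_1$, $\tau(\pi_1^*\pi_1)=\nor\pi_1([1\dbr)\nor_2^2=2/|U:\caO|$ and the Young bound $\nor\pi_1\nor\leq\nor f_1\nor_1=2$. Both routes land on exactly $\tfrac{1}{2|U:\caO|}$: you discard all generators but $s_1$, while the paper's averaging over all $n$ generators loses precisely a factor of $n$ in the $\ell^1$--$\ell^2$ step, so nothing changes numerically. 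What your version buys is the elimination of the $\epsilon$-approximation and of the external Baer-ring citation, at the price of two standard von Neumann algebra facts (partial isometries and range projections of elements of $W(G,\caO)$ lie in $W(G,\caO)$, and $A\leq\nor A\nor r$ for positive $A$ with support projection $r$) plus the $L^1$-to-operator-norm bound, which is an additional, explicit place where unimodularity enters. The individual steps all check out: $q\leq p$ because $p\pi^*=\pi^*$ and $q$ is minimal with that property; $\bar\tau(u^*u)=\bar\tau(uu^*)$ by Corollary~\ref{cor: TrW^n}; $f_1=\tfrac{1}{|U:\caO|}(I_U-I_{s_1U})$ is bi-$\caO$-invariant since $\caO\subseteq U\cap s_1Us_1^{-1}$; and the identity $\tau(\pi_1\pi_1^*)=\nor\pi_1([1\dbr)\nor_2^2$ does require one application of the trace property, since the vector state itself computes $\nor\pi_1^*([1\dbr)\nor_2^2$.
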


\begin{proof}
We use here the notation of Fact~\ref{fact:Ngu}, Remark~\ref{rem: PrelApprox} and Fact~\ref{fact:Ngu analitic}. In particular, $\alpha\in M_n(W(G,\caO))$ is the idempotent such that $\tilde\rho(N^G_U)=\bar\tau(\alpha)\mu_{\caO}$ and $p\in M_n(W(G,\caO))$ is the projection such that
    $\alpha^*M_n(W(G,\caO))=pM_n(W(G,\caO))$ and $\bar\tau(p)=\bar\tau(\alpha^*)=\bar\tau(\alpha)$. Suppose that $N^G_U$ is $\QG$-generated by the elements $I_{s_iU}-I_U\in C_c(G,\Q)^U$, for $1\leq i\leq n$ (cf. Proposition \ref{prop:Ngu}).

Denote by $e\in M_n(W(G,\caO))$ the matrix such that $e_{ij}= 1_{W(G,\caO)}$ for all $1\leq i,j\leq n$, and let $\epsilon>0$. By \cite[Corollary,~p.~43]{ber:baer}, there exists a non-zero projection $f=f_\epsilon\in \pi^*e^*M_n(W(G,\caO))\subseteq \pi^*M_n(W(G,\caO))\subseteq pM_n(W(G,\caO))$ such that $\nor e\pi-e\pi f\nor<\epsilon$. Hence, by Fact~\ref{fact:Ngu analitic} and since $|\bar\tau(e\pi-e\pi f)|\leq n\cdot\nor e\pi-e\pi f\nor<n\cdot\epsilon$ (cf.~Lemma~\ref{lem: DiseqTr}), one deduces that
\begin{equation}\label{eq:meq1}
    \frac{n}{|U:\caO
|}=\bar\tau(e\pi)=|\bar\tau(e\pi)|\leq |\bar\tau(e\pi-e\pi f)|+|\bar\tau(e\pi f)|\leq n\cdot\epsilon+|\bar\tau(e\pi f)|.
\end{equation}
Notice that all the rows in the matrix $e\pi f$ are equal to the vector 
$$\Bigg[\sum_{1\leq i\leq n}\pi_if_{i1},\ldots,\sum_{1\leq i\leq n}\pi_if_{in}\Bigg].$$ 
Hence the properties of $\bar\tau$ imply that
\begin{equation*}
|\bar\tau(e\pi f)|   
= \Bigg |\sum_{1\leq i,j\leq n}\tau(\pi_if_{ij})\Bigg |
 = \Bigg |\sum_{1\leq i,j\leq n}\tau(f_{ij}\pi_i)\Bigg|=\Bigg| \sum_{1\leq i,j\leq n} \langle f_{ij}\pi_i([1\dbr), [1\dbr\rangle\Bigg|
 \end{equation*}
 and so the Cauchy--Schwarz inequality yields
 \begin{equation}\label{eq:meq2}
|\bar\tau(e\pi f)|\leq \sum_{1\leq i,j\leq n} |\langle \pi_i([1\dbr), f_{ij}^*([1\dbr)\rangle|
                \leq  \sum_{1\leq i,j\leq n} \nor\pi_i([1\dbr)\nor_2\cdot \nor f_{ij}^*([1\dbr)\nor_2.
\end{equation}
By (\ref{eq:piij}), for each $1\leq i\leq n$ one computes
\begin{equation*}
\nor\pi_i([1\dbr)\nor_2^2=\frac{1}{|U:\caO|^2}\Bigg\langle \sum_{r\in U/\caO} ([r\dbr-[s_ir\dbr), \sum_{r'\in U/\caO} ([r'\dbr-[s_ir'\dbr)\Bigg\rangle=\frac{2}{|U:\caO|},
\end{equation*}
having $[r\dbr\neq[s_ir'\dbr$ and $[r'\dbr\neq[s_ir\dbr$ for all $r,r'\in U/\caO$ (because $U\cap s_iU=\emptyset$).

Then, the inequalities \eqref{eq:meq1} and \eqref{eq:meq2} imply that
\begin{equation}\label{eq:meq3}
\frac{n}{|U:\caO|}
 \leq  n\cdot \epsilon + \sqrt{\frac{2}{|U:\caO|}}\Bigg( \sum_{1\leq i,j\leq n}\nor f_{ij}^*([1\dbr)\nor_2\Bigg).
 \end{equation}
Using the explicit equivalence between the $\ell^1$-norm and the $\ell^2$-norm in $\R^{n^2}$ for the vector $[\nor f_{ij}^*([1\dbr)\nor_2]_{1\leq i,j\leq n}$, one obtains
$$\sum_{1\leq i,j\leq n}\nor f_{ij}^*([1\dbr)\nor_2\leq n\cdot \sqrt{\sum_{1\leq i,j\leq n}\nor f_{ij}^*([1\dbr)\nor_2^2}.$$
Therefore, \eqref{eq:meq3}~and~Remark~\ref{rem: TrProj1}(a) yield
\begin{equation}\label{eq:meq4}
              \frac{n}{|U:\caO|}   \leq  n\cdot\Bigg (\epsilon + \sqrt{\frac{2}{|U:\caO|}\sum_{1\leq i,j\leq n}\nor f_{ij}^*([1\dbr)\nor_2^2}\Bigg )\\
                 =  n\cdot\Bigg (\epsilon + \sqrt{\frac{2}{|U:\caO|}\bar\tau(f)}\Bigg ),
\end{equation}
for every $\epsilon>0$. 

 Since $\bar\tau(\alpha)=\bar\tau(p)\geq \bar\tau(f)$ for every $\epsilon>0$ (cf.~Fact~\ref{fact:Ngu analitic}(b)~and~Remark~\ref{rem: TrProj1}(b)), it follows from \eqref{eq:meq4} that $$\bar\tau(\alpha)\geq \frac{1}{2|U:\caO|}.$$
By Remark \ref{rem: PrelApprox}, one concludes that  
$$\tilde{\rho}(N_U^G)=\bar\tau(\alpha)\mu_\caO\geq \frac{1}{2|U:\caO|}\mu_\caO=\frac{1}{2}\mu_U.$$
\end{proof}
\begin{cor}\label{cor: 1stEstimateChar}
Let $G$ be a compactly generated unimodular t.d.l.c.~group with $\ccd_\Q(G)\leq 1$. Then
$\displaystyle\tilde{\chi}_G\leq \frac{1}{2}\mu_U$
for every $U\in \CO(G)$, $U\neq G$. In particular,
\begin{equation}\label{eq:chi>0}
  \tilde{\chi}_G=\displaystyle\frac{1}{2}\mu_U \Longleftrightarrow\tilde{\rho}(N^G_U)=\displaystyle\frac{1}{2}\mu_U.
\end{equation}
\end{cor}
\begin{proof}
By Proposition~\ref{prop: addHS}~and~Proposition~\ref{prop:Ngu}, for every $U\in\mathcal{CO}(G)$ one has 
$$\tilde{\chi}_G=\tilde{\rho}(\Q)=\tilde{\rho}(C_c(G,\Q)^U)-\tilde{\rho}(N_U^G)=\mu_U-\tilde{\rho}(N_U^G).$$
The claim follows directly from Proposition~\ref{prop: Approx}.
\end{proof}
\begin{thm}\label{thm:chi}
Let $G$ be a compactly generated unimodular t.d.l.c.~group with $\ccd_\Q(G)=1$. Then 
$\tilde{\chi}_G\leq 0$. Moreover $\tilde{\chi}_G=0$ if, and only if, $G=X\ast_UY$ for some compact open subgroups $X,Y,U\leq G$ satisfying $|X:U|=|Y:U|=2$.
\end{thm}
\begin{proof}
First note that $\tilde{\chi}_\caO=\tilde{\rho}(\Q[G/\caO])=\mu_\caO$ for every compact open subgroup $\caO\leq G$ (cf.~\S\ref{ss:hs}).
By \cite[Theorem~$\textup A{}^*$]{IC:cone}, $G$ splits non-trivially over a compact open subgroup $U\in\mathcal{CO}(G)$.
Suppose first that $G=X\ast_U Y$, for some compact open subgroup $U$ and compactly generated open subgroups $X,Y$ of $G$ different to $U$ (cf.~\cite[Proposition~4.1]{IC:cone}). By Proposition~\ref{prop: chiAH}(a) and Corollary~\ref{cor: 1stEstimateChar}, one has 
\begin{equation}\label{eq:chi1}
   \tilde{\chi}_G=\tilde{\chi}_X+\tilde{\chi}_Y-\tilde\chi_U\leq \frac{1}{2}\mu_U+\frac{1}{2}\mu_U-\mu_U\leq 0. 
\end{equation}
Similarly, let $G=X\ast_U^t$ for some compact open subgroup $U\leq G$ and compactly generated open subgroup $X\leq G$ different to $U$. From Proposition~\ref{prop: chiAH}(b) and Corollary~\ref{cor: 1stEstimateChar} one deduces that
\begin{equation}\label{eq:chi2}
    \tilde{\chi}_G= \tilde\chi_X-\tilde\chi_U\leq \frac{1}{2}\mu_U-\mu_U<0.
\end{equation}
If $\tilde{\chi}_G=0$, then \eqref{eq:chi1} and \eqref{eq:chi2} imply that $G=X\ast_U Y$ for some compact open subgroup $U\leq G$ and compactly generated open subgroups $X,Y\leq G$ different to $U$ (cf.~\cite[Proposition~4.1]{IC:cone}).
Moreover, \eqref{eq:chi1} yields to
\begin{equation}\label{eq:chi3}
    z\ce\tilde{\chi}_X-\frac{1}{2}\mu_U=\frac{1}{2}\mu_U-\tilde{\chi}_Y.
\end{equation}
By Corollary~\ref{cor: 1stEstimateChar} applied to $X$ and $Y$, respectively, one deduces that $z$ is simultaneously non-positive and non-negative and therefore equal to $0$. Since $\ccd_\Q(X),\ccd_\Q(Y)\leq \ccd_\Q(G)=1$ (cf.~\cite[Proposition~3.7(c)]{cw:qrat}), the first part of the theorem implies that $\ccd_\Q(X)=\ccd_\Q(Y)=0$ and then $X$ and $Y$ are compact (cf.~\cite[Proposition~3.7(a)]{cw:qrat}). Thus,
$$\mu_{X}=\tilde{\chi}_X=\frac{1}{2}\mu_U=\tilde{\chi}_Y=\mu_X.$$
Since $\mu_{\caO_1}=|\caO_1:\caO_2|^{-1}\mu_{\caO_2}$ for all compact open subgroups $\caO_1,\caO_2\leq G$ with $\caO_2\leq \caO_1$, this yields the "only if" part of the claim. The "if" part is a direct consequence of Proposition~\ref{prop: chiAH}(a).
\end{proof}
\begin{rem}\label{rem:chi1/2}
    By Theorem~\ref{thm:chi}, the equivalence in \eqref{eq:chi>0} refines, for every $U\in \CO(G)$ with $U\neq G$, as follows:
    \begin{equation*}
  \tilde{\chi}_G=\displaystyle\frac{1}{2}\mu_U \Longleftrightarrow\tilde{\rho}(N^G_U)=\displaystyle\frac{1}{2}\mu_U \Longleftrightarrow |G:U|=2.
\end{equation*}
\end{rem}
\section{Accessibility: general results}
\label{s:acc}
\subsection{Graphs, trees and graphs of groups}\label{ss:gog}
The notion of graph here adopted is the one of J-P.~Serre (cf.~\cite[p.~13]{ser:trees}). 
Namely, a \emph{graph} $\Gamma=(\euV(\Gamma), \euE(\Gamma), o, t, \overline{\cdot})$ consists of a (non-empty) set of vertices $\euV(\Gamma)$, a set of edges $\euE(\Gamma)$, origin and terminus maps $o,t\colon\euE(\Gamma)\longrightarrow \euV(\Gamma)$ and an edge-reversing map $\overline{\cdot}\colon\euE(\Gamma)\longrightarrow \euE(\Gamma)$, which is an involution satisfying $o(\overline{\eue})=t(\eue)$ and $\overline{\eue}\neq \eue$ for every $\eue\in \euE(\Gamma)$.

A \emph{path} $\mathfrak{p}=(\eue_1,\ldots,\eue_n)$ is a finite sequence of edges such that $o(\eue_{i+1})=t(\eue_i)$, for every $1\leq i\leq n-1$. In particular, $\mathfrak{p}$ is \emph{reduced} if $\eue_{i+1}\neq\overline{\eue}_{i}$ for every $i<n$. A \emph{cycle} $(\eue_1,...,\eue_n)$ is a closed path, i.e., $o(\eue_1)=t(\eue_n)$.
A graph is \emph{connected} if it is either a $1$-point graph or, for every two vertices $v,w$, there exists a path $(\eue_1,...,\eue_n)$ with $o(\eue_1)=v$ and $t(\eue_n)=w$.
A \emph{tree} is a connected graph with no reduced cycles.

\smallskip

Adapting \cite[Definition~8, p.~37]{ser:trees} to the t.d.l.c.~context, one obtains the notion of \emph{graph of t.d.l.c.~groups} $(\gog, \Lambda)$. Namely, 
\begin{itemize}
  \item[(G1)] $\Lambda$ is a connected graph; 
  \item[(G2)] $\gog$ is a family of t.d.l.c.~groups $\gog_v$ ($v\in \euV(\Lambda)$) and $\gog_\eue$ ($\eue\in \euE(\Lambda)$) satisfying $$\gog_\eue=\gog_{\bar{\eue}},\quad\ \forall\ \eue\in \euE(\Lambda);$$
  \item[(G3)] for every $\eue\in \euE(\Lambda)$, there is a group monomorphism $\alpha_\eue:\gog_\eue\rightarrow \gog_{t(\eue)}$ that is continuous and open.
\end{itemize}
The graph of groups $(\gog, \Lambda)$ is called {\em proper} if, whenever $\alpha_\eue$ is surjective, then $o(\eue)=t(\eue)$.
For every graph of t.d.l.c.~groups $(\gog, \Lambda)$,
the (abstract) {\em fundamental group $\pi_1(\gog, \Lambda)$ of $(\gog, \Lambda)$} can be equipped with a group topology~$\tau$ such that
\begin{itemize}
    \item[(F1)] $\pi_1(\gog, \Lambda)$ is a t.d.l.c.~group; and 
    \item[(F2)] the natural inclusions $\gog_v\hookrightarrow\pi_1(\gog, \Lambda)$ and $\gog_\eue\hookrightarrow\pi_1(\gog, \Lambda)$ are continuous and open;
\end{itemize} 
(cf.~\cite[\S III.2, Proposition~1]{bou:top}). Moreover, the group topology~$\tau$ is uniquely determined by properties (F1) and~(F2).
From now on, the group $\pi_1(\gog, \Lambda)$ is regarded as a t.d.l.c.~group. 
The construction of the fundamental group depends on the choice of a maximal subtree $\euT\subseteq \Lambda$. Nevertheless, we shall denote the fundamental group by $\pi_1(\gog, \Lambda,\euT)$ only if the maximal subtree is relevant.

Several concepts concerning graphs of groups transfer verbatim to the t.d.l.c.~context. 
In particular, we recall the following construction (cf.~\cite[pp.~30-31]{Cohen}).
Let $(\gog, \Lambda)$ be a graph of t.d.l.c.~groups with compact edge-groups. Let $\mathcal{V}\subseteq \euV(\Lambda)$ and, for every $v\in\mathcal{V}$, assume that there is a graph of t.d.l.c.~groups $(\euH^{(v)}, \Gamma^{(v)})$ such that $\gog_v=\pi_1(\euH^{(v)}, \Gamma^{(v)})$. For every $\eue\in \euE(\Lambda)$ with $t(\eue)\in \mathcal{V}$, let $x=x_\eue\in \euV(\Gamma^{(t(\eue))})$ and $g_\eue\in \gog_{t(\eue)}$ be such that the image of $\gog_\eue$ in $\gog_{t(\eue)}$ lies in $g_{\eue}(\euH^{(t(\eue))})_xg_{\eue}^{-1}$. For every such $\eue$, since $\gog_\eue$ is profinite, the pair $(g_\eue, x_{\eue})$ exists by the Bruhat--Tits fixed point theorem (although it is not uniquely determined by~$\eue$).
  One defines the \emph{graph of t.d.l.c.~groups~$(\euH,\Gamma)$ obtained from~$(\gog,\Lambda)$ by expanding~$\mathcal{V}$ with~$\{\Gamma^{(v)}\}_{v\in\mathcal{V}}$ and with respect to $\{(g_{\eue}, x_{\eue})\}_{\eue\in\euE(\Lambda),\,t(\eue)\in\mathcal{V}}$} as follows. The graph $\Gamma$ has vertex-set $\bigsqcup_{v\in \mathcal{V}}\euV(\Gamma^{(v)})\sqcup \big(\euV(\Lambda)\setminus \mathcal{V}\big)$ and edge-set $\bigsqcup_{v\in\mathcal{V}}\euE(\Gamma^{(v)})\sqcup \euE(\Lambda)$. 
  The inversion map in~$\Gamma$ is the obvious one.
  The origin and terminus maps, denoted by~$o_\Gamma$ and $t_\Gamma$ respectively, are defined so that: 
  \begin{itemize}
      \item[(i)] for every $v\in\mathcal{V}$, $\Gamma^{(v)}$ is a subgraph of~$\Gamma$;
      \item[(ii)] the map~$t_\Gamma$ coincides with the terminus map~$t_\Lambda$ in~$\Lambda$ on every $\eue\in\euE(\Lambda)$ with~$t_\Lambda(\eue)\not\in\mathcal{V}$; and
      \item[(iii)] for every $\eue\in \euE(\Lambda)$ with $t_\Lambda(\eue)\in \mathcal{V}$, one has $t_\Gamma(\eue)=x_\eue$.
  \end{itemize}
The relevant graph of t.d.l.c.~groups~$(\euH,\Gamma)$ is defined as follows:
\begin{itemize}
    \item[(i)] for every $x\in (\euV(\Lambda)\setminus \mathcal{V})\sqcup \euE(\Lambda)$ let $\euH_x:=\gog_x$; for all $v\in \mathcal{V}$ and $x\in \euV(\Gamma^{(v)})\sqcup \euE(\Gamma^{(v)})$, let $\euH_x:=\euH_x^{(v)}$; 
    \item[(ii)] for every $\eue\in \euE(\Gamma)$, the monomorphism $\euH_\eue\hookrightarrow \euH_{t_\Gamma(\eue)}$ is the monomorphism $\gog_{\eue}\hookrightarrow \gog_{t_\Lambda(\eue)}$ (resp.~$\euH_{\eue}^{(v)}\hookrightarrow {\euH_{t_{\Gamma^{(v)}}(\eue)}^{(v)}}$) if $\eue\in \euE(\Lambda)$ and $t_\Lambda(\eue)\not\in\mathcal{V}$ (resp.~$\eue\in \euE(\Gamma^{(v)})$) and, if $\eue\in\euE(\Lambda)$ and $t_\Lambda(\eue)\in\mathcal{V}$, it is the monomorphism $\gog_\eue\hookrightarrow \gog_{t_\Lambda(\eue)}$ followed by the right-conjugation by~$g_\eue$.
\end{itemize}

\begin{lem}[\protect{cf.~\cite[Lemma~2]{Cohen}}]\label{lem:exp}
For $(\gog,\Gamma)$ and $(\euH,\Gamma)$ as before, one has $\pi_1(\gog,\Lambda)\simeq \pi_1(\euH,\Gamma)$. 
\end{lem}
\begin{prop}[\textbf{Generalised Mayer--Vietoris sequence}]\label{prop: MVSeq}
Let $G$ denote the fundamental group of the graph of t.d.l.c.~groups  $(\gog, \Lambda)$. Let $\euE^+\subseteq \euE(\Lambda)$ be an orientation on the edges of $\Lambda$, i.e., $|\euE^+\cap \{\eue,\overline{\eue}\}|=1$ for every $\eue\in \euE(\Lambda)$. For each
$M\in\mathrm{ob}(\QGdis)$, one has a long exact sequence 
\[\xymatrix@C-=1pc{\cdots\ar[r]&\displaystyle\prod_{v\in V(\Lambda)}\dH^n(\gog_v,M)\ar[r]&\displaystyle\prod_{\eue\in\euE^+} \dH^n(\gog_\eue,M)\ar[r]& \dH^{n+1}(G,M)\ar[r] &\cdots}\]
In particular,
\begin{equation}\label{eq:cdQ<}
    \sup_{v\in \euV(\Lambda)}\ccd_\Q(\gog_v)\leq\ccd_\Q(G)\leq \sup \{\ccd_\Q(\gog_v),1+\ccd_\Q(\gog_\eue)\mid v\in \euV(\Lambda),\eue\in \euE^+\}.
\end{equation}
\end{prop}
\begin{proof}
The first part of the statement is \cite[Proposition~5.4(a)]{cw:qrat}. From this, one easily deduces the second inequality of \eqref{eq:cdQ<}.
Finally, the first inequality in \eqref{eq:cdQ<} is due to \cite[Proposition~3.7(c)]{cw:qrat}. 
\end{proof}
Proposition~\ref{prop: chiAH} implies the following generalisation of \cite[Corollary~C]{ccw:euler}.
\begin{prop}\label{prop: chiGrOfGr}
Let $G$ be a unimodular t.d.l.c.~group which is isomorphic to $\pi_1(\gog, \Lambda)$, for some finite graph $(\gog,\Lambda)$ of t.d.l.c.~groups of type $\FP$. 
Then $G$ is of type $\FP$ and, for a given orientation $\euE^+\subseteq \euE(\Lambda)$, one has
$$\tilde{\chi}_G=\sum_{v\in \euV(\Lambda)}\tilde{\chi}_{\gog_v}-\sum_{\eue\in \euE^+}\tilde{\chi}_{\gog_\eue}.$$
\end{prop}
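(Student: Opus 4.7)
The plan is to use the Bass--Serre tree $\euT$ attached to $(\gog, \Lambda)$, on which $G = \pi_1(\gog, \Lambda)$ acts with vertex stabilizers conjugate to the open subgroups $\gog_v$ ($v \in \euV(\Lambda)$) and edge stabilizers conjugate to $\gog_e$ ($e \in \mathcal{E}^+$). Since $\Lambda$ is finite, the augmented $G$-equivariant cellular chain complex of $\euT$, oriented according to $\mathcal{E}^+$, produces the short exact sequence of discrete $\QG$-modules
\begin{equation*}
0 \longrightarrow \bigoplus_{e \in \mathcal{E}^+} \Q[G/\gog_e] \longrightarrow \bigoplus_{v \in \euV(\Lambda)} \Q[G/\gog_v] \longrightarrow \Q \longrightarrow 0.
\end{equation*}

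Each term of this sequence is of type $\FP$ in $\QGdis$: for $H \in \{\gog_v, \gog_e\}$ (open in $G$ and of type $\FP$ by assumption), any finite finitely generated projective resolution $P_\bullet \to \Q$ in ${}_{\Q[H]}\dis$ induces, via $\QG \otimes_{\Q[H]}(\argu)$, to a finite finitely generated projective resolution of $\Q[G/H]$ in $\QGdis$; this uses that each summand $\Q[H/\caO]$ (for $\caO \in \CO(H)$) is sent to $\Q[G/\caO]$, and that $\caO$ remains compact open in $G$ because $H$ is open. Since $\tilde\rho(\Q[G/\caO]) = \mu_\caO$ matches the Hattori--Stallings rank of $\Q[H/\caO]$ over $\Q[H]$ under the natural identification of Haar measures, taking the alternating sum across the induced resolution yields the induction formula
\begin{equation*}
\tilde\rho(\Q[G/H]) = \tilde\chi_H.
\end{equation*}
Applying Proposition \ref{prop: addHS} to the Bass--Serre short exact sequence above and invoking \eqref{eq: EuFP} then gives the claimed identity.

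The main obstacle is precisely the induction formula $\tilde\rho(\Q[G/H]) = \tilde\chi_H$, which requires one to verify that induction from $\Q[H]$ to $\QG$ is exact on finite projective resolutions and compatible with the Hattori--Stallings rank in $\boh(G)$; once these facts are in hand, the remainder is formal bookkeeping against the orientation $\mathcal{E}^+$. An entirely equivalent alternative, more directly matching the hint that the proof of Proposition \ref{prop: chiAH} applies, is an induction on $|\mathcal{E}^+|$: the base case $|\mathcal{E}^+| = 0$ reduces to $G \simeq \gog_v$; in the inductive step, removing one edge of $\Lambda$ expresses $G$ either as an amalgamated product or as an HNN extension over $\gog_e$ (depending on whether the edge is a bridge of $\Lambda$), and Proposition \ref{prop: chiAH} together with the inductive hypothesis closes the computation, provided one verifies via Proposition \ref{prop: MVSeq} that the smaller fundamental groups inherit type $\FP$.
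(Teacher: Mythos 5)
Your first argument---the augmented cellular chain complex of the Bass--Serre tree yielding the short exact sequence $0\to\bigoplus_{e\in\mathcal{E}^+}\Q[G/\gog_e]\to\bigoplus_{v\in\euV(\Lambda)}\Q[G/\gog_v]\to\Q\to 0$ in $\QGdis$, combined with additivity of the Hattori--Stallings rank (Proposition~\ref{prop: addHS}) and the induction formula $\tilde\rho(\Q[G/H])=\tilde\chi_H$ for open subgroups $H$ of type $\FP$---is precisely the argument the paper invokes by deferring to the proof of Proposition~\ref{prop: chiAH} (i.e.\ of \cite[Theorem~5.2]{ccw:euler}), so your proposal is correct and follows the paper's route. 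Your edge-induction alternative would additionally require verifying unimodularity of the intermediate fundamental groups before Proposition~\ref{prop: chiAH} can be applied, so the short-exact-sequence argument is the one to keep.
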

The equality in~Proposition~\ref{prop: chiGrOfGr} must be interpreted as an equality among the coefficients 
 of the given Euler--Poincar\'e characteristics, all computed with respect to a compact open subgroup in $\bigcap_{\eue\in\euE^+}\gog_{\eue}$. Here, every~$\gog_v$ and every~$\gog_\eue$ are regarded as open subgroups of~$G$.

 The following proposition focuses on a situation arising in Proposition~\ref{prop:inacc}, and will play a key role in the proof of Corollary~\ref{cor:inaccuni}. Contrary to the previous results, this situation can happen only for non-discrete t.d.l.c.~groups. 

\subsection{Accessibility of t.d.l.c.~groups}\label{sus:ab}
A t.d.l.c.~group $G$ is said to {\em split over a compact open subgroup} if one of the following two conditions occurs:
\begin{itemize}
    \item[($\alpha$)] $G\simeq\pi_1(\gog,\Lambda)$, where $\Lambda$ is the connected graph with two distinct vertices $v,w$ and one geometric edge $\{\eue, \bar{\eue}\}$, and $\gog_\eue\in\CO(G)$. In other words, $G$ is isomorphic to the t.d.l.c.~free product  with amalgamation $\gog_v\ast_{\gog_e}\gog_w$ (cf.~\cite[Definition 8.B.8]{codh:met}).
    \item[($\beta$)] $G\simeq\pi_1(\gog,\Lambda)$, where $\Lambda$ is the connected graph with one vertex $v$ and one geometric edge $\{\eue, \bar{\eue}\}$, and $\gog_\eue\in\CO(G)$. In other words, $G$ is isomorphic to the t.d.l.c.~HNN-extension $\gog_v\ast_{\gog_{\eue},\alpha_e}^{\eue}$, where $\eue$ is regarded as the stable letter of the extension (cf.~\cite[Definition 8.B.8]{codh:met}).
\end{itemize}
In particular, $G$ splits {\em non-trivially} if $\gog_\eue$ is strictly contained in each vertex-group.
By construction, the vertex-groups of $(\gog, \Lambda)$ are open subgroups of $G$. 
Moreover, by \cite[Proposition~4.1]{IC:cone}, if $G$ is compactly generated and splits as above, then the vertex-groups of $(\gog,\Lambda)$ are compactly generated.

\medskip

The decomposition theorem due to H.~Abels, which is included in \cite[Theorem~A${}^*$]{IC:cone}, implies that every compactly generated t.d.l.c.~group with more than one end splits non-trivially over a compact open subgroup. 
In particular, the notion of accessibility carries over to the t.d.l.c.~context as follows (\cite[Definition~8]{km:rough} and \cite{IC:cone}). 

A compactly generated t.d.l.c.~group $G$ is said to be {\em accessible} if it is isomorphic to the Bass--Serre fundamental group of a finite graph of t.d.l.c.~groups $(\gog,\Lambda)$ with compact edge-groups and whose vertex-groups have at most one end. 

According to the cohomological characterisation of compactly generated t.d.l.c.~groups with more than one end (cf.~\S\ref{s:Rat}~or~\cite[Theorem~A${}^*$]{IC:cone}), the concept of accessibility can be reformulated in the following terms.

\begin{defn}\label{def:acc}
A compactly generated t.d.l.c.~group $G$ is {\em accessible} if
\begin{itemize}
    \item[$(A1)$] $G$ is isomorphic to the fundamental group of a finite graph of t.d.l.c. groups $(\gog, \Lambda)$  with compact edge-groups; and 
    \item[$(A2)$] $\dH^1(\gog_v,\Bi(\gog_v))=0$ for every $v\in\euV(\Lambda)$.
    \end{itemize}
\end{defn}

\begin{fact}\label{fact:one end}
Let $G$ be a compactly generated t.d.l.c.~group with $\ccd_\Q(G)=1$. Then $G$ splits non-trivially over a compact open subgroup. 
Moreover, for every finite graph of t.d.l.c.~groups $(\gog, \Lambda)$ with compact edge-groups satisfying $G\simeq \pi_1(\gog,\Lambda)$ and every $v\in \euV(\Lambda)$, either $\gog_v$ is compact (if $\ccd_\Q(\gog_v)=0$) or it is compactly generated and splits non-trivially over a compact open subgroup (if $\ccd_\Q(\gog_v)=1$).
In particular $G$ is accessible if, and only if, $G$ is isomorphic to the fundamental group of a finite proper graph of profinite groups.
\end{fact}
\begin{proof}
If $G$ is compactly generated with $\ccd_\Q(G)=1$, then it is evidently of type $\FP$ and so $\dH^1(G,\Bi(G))\neq 0$ by \cite[Proposition~4.7]{cw:qrat}. By \cite[Theorem~A${}^*$]{IC:cone}, $G$ splits non-trivially over a compact open subgroup. Let $(\gog, \Lambda)$ be a graph of t.d.l.c.~groups with compact edge-groups satisfying $G\simeq \pi_1(\gog,\Lambda)$. Recall that, for every $v\in \euV(\Lambda)$, the group $\gog_v$ can be regarded as an open subgroup of $G\simeq \pi_1(\gog, \Lambda)$. Thus, $\gog_v$ is compactly generated and $\ccd_\Q(\gog_v)\leq 1$ (cf.~\cite[Proposition~4.1]{IC:cone}~and~\cite[Proposition~3.7(c)]{cw:qrat}). Therefore, $\gog_v$ either is compact (if $\ccd_\Q(\gog_v)=0$) or splits non-trivially over a compact open subgroup (if $\ccd_\Q(\gog_v)=1$). Hence the first and the second statements of the claim hold.
The last part of the assertion follows from Definition~\ref{def:acc} and the fact that a t.d.l.c.~group $H$ of type $\FP$ satisfying $\ccd_\Q(H)\leq 1$ and $\dH^1(H,\Bi(H))=0$ is necessarily compact (cf.~\cite[Proposition~3.7(a), Proposition~4.7]{cw:qrat}).
\end{proof}

The following fact has been proved in \cite[Proposition~5.6]{cw:qrat}.
\begin{fact}\label{fsct:vice}
Let $G$ be the t.d.l.c.~fundamental group of a graph of profinite groups. Then $\ccd_\Q(G)\leq 1$.
\end{fact}

As a consequence of Lemma~\ref{lem:exp}, one obtains the following result.
\begin{prop}\label{prop: AccFact}
 Suppose that a t.d.l.c.~group $G$ is isomorphic to $\pi_1(\gog,\Lambda)$, where $(\gog,\Lambda)$ is a finite graph of t.d.l.c.~groups satisfying the following properties:
\begin{itemize}
    \item[(i)] every edge-group $\gog_\eue$ is compact;
    \item[(ii)] every vertex-group $\gog_v$ is accessible.
\end{itemize}
Then $G$ is accessible.
\end{prop}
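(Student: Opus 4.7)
The plan is to refine the given decomposition $(\gog,\Lambda)$ of $G$ by expanding each vertex-group according to its own profinite graph-of-groups decomposition, obtaining a single finite graph of profinite groups whose t.d.l.c.\ fundamental group is $G$; the conclusion then follows at once from Fact~\ref{fact:one end}.

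First, since each vertex-group $\gog_v$ is accessible, Fact~\ref{fact:one end} yields a finite graph of profinite groups $(\gog^{(v)},\Lambda^{(v)})$ together with an isomorphism $\gog_v\simeq \pi_1(\gog^{(v)},\Lambda^{(v)})$; let $T_v$ denote the associated Bass--Serre tree. Next, for every edge $\eue\in\euE(\Lambda)$ with $t(\eue)=v$, the image of $\gog_\eue$ in $\gog_v$ is a profinite (hence compact) subgroup, so its action on $T_v$ has a fixed vertex. Replacing the prescribed embedding $\eta_\eue$ by its composition with an interior automorphism of $\gog_v$ (which does not affect the isomorphism type of $G$), we may assume that there is a vertex $u(\eue)\in\euV(\Lambda^{(v)})$ with $\eta_\eue(\gog_\eue)\leq \gog^{(v)}_{u(\eue)}$; since $\gog_\eue$ is open in $\gog_v$ and $\gog^{(v)}_{u(\eue)}$ is profinite, the inclusion is automatically an open continuous embedding.

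Then I would build the refined graph of groups $(\gog',\Lambda')$ by taking $\Lambda'$ to be the disjoint union of the finite graphs $\Lambda^{(v)}$, with one extra geometric edge $\eue'$ inserted for each geometric edge $\{\eue,\bar\eue\}$ of $\Lambda$, joining $u(\bar\eue)\in \euV(\Lambda^{(o(\eue))})$ to $u(\eue)\in \euV(\Lambda^{(t(\eue))})$ and carrying the profinite edge-group $\gog_\eue$ with monomorphisms $\eta_{\eue'}$, $\eta_{\bar{\eue}'}$ furnished by the previous step; the vertex-groups, edge-groups and attaching maps inside each block $\Lambda^{(v)}$ are kept unchanged. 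By construction $(\gog',\Lambda')$ is a finite graph of profinite groups. Cohen's refinement lemma \cite[Lemma~2]{Cohen}, applied successively at each vertex of $\Lambda$, then provides an isomorphism $\pi_1(\gog',\Lambda')\simeq \pi_1(\gog,\Lambda)\simeq G$ of abstract groups; because all old and new edge-groups are profinite and open in the corresponding vertex-groups, the inclusions of vertex-groups are again continuous and open, hence the isomorphism is one of t.d.l.c.\ groups by the universal property encoded in (F1)--(F2). An application of Fact~\ref{fact:one end} then shows that $G$ is accessible.

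The main obstacle is the second step, namely positioning the profinite edge-group $\gog_\eue$ inside a single vertex-group of the accessible decomposition of $\gog_v$ in a way that yields a genuine graph-of-t.d.l.c.-groups structure. This requires two ingredients: the standard observation that a compact subgroup acting continuously on the Bass--Serre tree of $\gog_v$ has a fixed vertex, and the fact that an open embedding of a compact open subgroup into a profinite group in the abstract sense is automatically continuous, so that condition (G3) is preserved. Once this is in place, the rest of the argument is a purely combinatorial refinement together with a standard identification of fundamental groups.
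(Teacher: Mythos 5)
Your proposal is correct and follows essentially the same route as the paper, which proves this proposition simply by citing Cohen's refinement lemma \cite[Lemma~2]{Cohen} together with Fact~\ref{fact:one end}; your write-up just supplies the details (fixed vertex for the compact edge-group in the Bass--Serre tree, conjugating the attaching map, checking openness/continuity of the new embeddings) that the paper leaves implicit.
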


\begin{prop}\label{prop:deg}
    Let $G$ be a compactly generated unimodular t.d.l.c.~group of type $\FP$. Let $\{(\gog_n,\Lambda_n)\}_{n\geq 1}$ be an infinite sequence of finite graphs of t.d.l.c.~groups with compact edge-groups, $G\simeq\pi_1(\gog_n,\Lambda_n)$ for every $n\geq 1$. Moreover, assume that for every $n\geq 2$ there are bijections $\varphi_n\colon \euV(\Lambda_n)\to \euV(\Lambda_1)$ and $\psi_n\colon \euE(\Lambda_n)\to \euE(\Lambda_1)$, with $\psi_n$ commuting with the edge-inversion maps, and there is $\eue_n\in \euE(\Lambda_n)$ with $v_n\ce o(\eue_n)$ such that:
    \begin{itemize}
        \item[(i)] $(\gog_n)_{v_n}$ is inaccessible, $\varphi_n(v_n)=\varphi_{n+1}(v_{n+1})$, and $(\gog_{n+1})_{\eue_{n+1}}$ is $G$-conjugate to a proper subgroup of $(\gog_n)_{\eue_n}$;
            \item[(ii)] for every $w\in \euV(\Lambda_n)\setminus\{v_n\}$, the group $(\gog_n)_w$ is $G$-conjugate to $(\gog_1)_{\varphi_n(w)}$ and, for every $\euf\in \euE(\Lambda_n)\setminus\{\eue_n,\overline{\eue}_n\}$, the group $(\gog_n)_{\euf}$ is $G$-conjugate to a subgroup of $(\gog_1)_{\psi_n(\euf)}$.
    \end{itemize}
    Then $\ccd_\Q(G)\neq 1$.
\end{prop}
\begin{proof}
    Assume that $\ccd_\Q(G)=1$ and let $n\geq 2$.
    By Proposition~\ref{prop: chiGrOfGr}, given an orientation $\euE_n^+$ of $\euE(\Lambda_n)$ with $\eue_n\in \euE_n^+$, one has
    \begin{equation}\label{eq:chiLambdan}
        \tilde{\chi}_G=\underbrace{\sum_{w\in \euV(\Lambda_n), \atop w\neq v_n}\tilde{\chi}_{(\gog_n)_w}-\sum_{\euf\in \euE_n^+, \atop \euf\neq\eue_n}\mu_{(\gog_n)_\euf}}_{=:\eta_n(G)}\,+\,\tilde{\chi}_{(\gog_n)_{v_n}}-\mu_{(\gog_n)_{\eue_n}}.
    \end{equation}
    By~(ii), one has $\tilde{\chi}_{(\gog_n)_{w}}=\tilde{\chi}_{(\gog_1)_{\varphi_n(w)}}$ for every $w\in \euV(\Lambda_n)\setminus\{v_n\}$. Moreover, again by (ii), for every $\euf\in \euE_n^+$ there is $g=g_\euf\in G$ such that $g(\gog_n)_{\euf}g^{-1}\leq (\gog_1)_{\psi_n(\euf)}$ and then
    \begin{equation*}
    \mu_{(\gog_n)_{\euf}}=\mu_{g(\gog_n)_{\euf}g^{-1}}=|(\gog_1)_{\psi_n(\euf)}:g(\gog_n)_{\euf}g^{-1}|\mu_{(\gog_1)_{\psi_n(\euf)}}\geq\mu_{(\gog_1)_{\psi_n(\euf)}}.
    \end{equation*}
   We now find a quantity $\eta(G)$, independent of $n$, such that 
   \begin{equation}\label{eq:chiLambdan2}
       \eta_n(G)-\mu_{(\gog_n)_{\eue_n}}\leq \eta(G)-\frac{1}{2}\mu_{(\gog_n)_{\eue_n}},\quad \forall\, n\geq 2.
   \end{equation}
    In detail, let $v\ce\varphi_2(v_2)$ and recall that, by (i), one has $v=\varphi_n(v_n)$ for every $n\geq 2$. Then, one observes what follows:
    \begin{equation*}
    \begin{split}
        \eta_n(G)-\mu_{(\gog_n)_{\eue_n}} & 
        \leq\sum_{w\in\euV(\Lambda_n), \atop w\neq v_n}\tilde{\chi}_{(\gog_n)_{w}}-\frac{1}{2}\cdot\sum_{\euf\in \euE_n^+,\atop \euf\neq \eue_n}\mu_{(\gog_n)_{\euf}}-\mu_{(\gog_n)_{\eue_n}}\\
        &=\sum_{w\in\euV(\Lambda_n), \atop w\neq v_n}\tilde{\chi}_{(\gog_n)_w}-\frac{1}{2}\cdot\sum_{\euf\in \euE_n^+}\mu_{(\gog_n)_{\euf}}-\frac{1}{2}\mu_{(\gog_n)_{\eue_n}}\\
            & \leq\sum_{w\in \euV(\Lambda_n), \atop w\neq v_n}\tilde{\chi}_{(\gog_1)_{\varphi_n(w)}}-\frac{1}{2}\cdot\sum_{\euf\in \euE_n^+}\mu_{(\gog_1)_{\psi_n(\euf)}}-\frac{1}{2}\mu_{(\gog_n)_{\eue_n}}\\
                 & =\underbrace{\sum_{w\in \euV(\Lambda_1), \atop w\neq v}\tilde{\chi}_{(\gog_1)_w}-\frac{1}{4}\cdot\sum_{\euf\in \euE(\Lambda_1)}\mu_{(\gog_1)_{\euf}}}_{=:\eta(G)}-\frac{1}{2}\mu_{(\gog_n)_{\eue_n}}.
    \end{split}
    \end{equation*}
   For the last equality of \eqref{eq:chiLambdan2} we have used that $\euE(\Lambda_1)=\psi_n(\euE(\Lambda_n))=\psi_n(\euE_n^+)\sqcup \overline{\psi_n(\euE_n^+)}$ and that $(\gog_1)_\euf=(\gog_1)_{\overline{\euf}}$ for every $\euf\in \euE(\Lambda_1)$.

Let $\{\caO_n\}_{n\geq 1}$ be the sequence of compact open subgroups of $G$ provided by (i) such that $\caO_1\geq \caO_2>\ldots >\caO_n>\ldots$ and $\caO_n$ is $G$-conjugate to $(\gog_n)_{\eue_n}$ for each $n\geq 1$. Then, for every $n\geq 2$,
    \begin{gather}\label{eq:On}
\mu_{(\gog_n)_{\eue_n}}=\mu_{\caO_n}=|\caO_1:\caO_n|\mu_{\caO_1}=\Bigg(\prod_{i=1}^{n-1}|\caO_i:\caO_{i+1}|\Bigg)\mu_{\caO_1}\geq 2^{n-1}\mu_{\caO_1}.
    \end{gather}
Moreover, Theorem~\ref{thm:chi} yields $\tilde{\chi}_{(\gog_n)_{\varphi_n(v)}}\leq 0$. This, together with \eqref{eq:chiLambdan}, \eqref{eq:chiLambdan2} and \eqref{eq:On}, implies that
   \begin{equation*}
       \tilde{\chi}_G\leq \eta_n(G)-\mu_{(\gog_n)_{\eue_n}}=\eta_n(G)-\mu_{\caO_n}\leq \eta(G)-2^{n-2}\mu_{\caO_1},
   \end{equation*}
    for every $n\geq 2$. This contradicts the fact that $\tilde\chi_G=r\cdot \mu_{\caO_1}$ for some fixed $r\in\Q$.
\end{proof}

\begin{lem}\label{lem:inacc}
    Let $G$ be a compactly generated inaccessible t.d.l.c.~group. Consider a finite proper graph of t.d.l.c.~groups $(\gog,\Lambda)$ with compact edge-groups such that $G\simeq\pi_1(\gog,\Lambda)$, and let $v\in \euV(\Lambda)$ such that $\gog_v$ is inaccessible. Then there are two finite graphs of t.d.l.c.~groups $(\euH,\Gamma)$ and $(\gog',\Lambda')$ with compact edge-groups such that: $(\gog',\Lambda')$ is proper, $|\euE(\Gamma)|=|\euE(\Lambda)|+2$, $|\euE(\Lambda')|\in\{|\euE(\Lambda)|, |\euE(\Lambda)|+2\}$, and $G\simeq \pi_1(\euH,\Gamma)\simeq \pi_1(\gog',\Lambda')$.
    
    In addition, the pair $((\euH,\Gamma), (\gog',\Lambda'))$ satisfies exactly one of the two following conditions:
    \begin{itemize}
        \item[(A)] $(\euH,\Gamma)$ is proper, $(\gog',\Lambda')=(\euH,\Gamma)$ and there is an injective map $\iota\colon \euV(\Lambda)\hookrightarrow \euV(\Lambda')$ such that $\gog'_{\iota(v)}$ is $G$-conjugate to an inaccessible proper subgroup of $\gog_v$;
        \item[(B)] $(\euH,\Gamma)$ is not proper. Moreover, there are bijections $$\varphi\colon \euV(\Lambda')\to \euV(\Lambda)\quad\text{and}\quad\psi\colon \euE(\Lambda')\to \euE(\Lambda),$$ with $\psi$ commuting with the edge-inversion maps, such that the following properties hold:   
    \begin{itemize}
        \item[(B1)] $\gog'_{\varphi^{-1}(v)}$ is an inaccessible proper subgroup of $\gog_{v}$;
        \item[(B2)] for every $w\in \euV(\Lambda')\setminus\{\varphi^{-1}(v)\}$, the group $\gog'_w$ is $G$-conjugate to $\gog_{\varphi(w)}$ and, for every $\euf'\in \euE(\Lambda')$, the group $\gog_{\euf'}$ is $G$-conjugate to a subgroup of $\gog_{\psi(\euf')}$. In particular, there is $\eue\in \euE(\Lambda')$ with $o(\eue)=\varphi^{-1}(v)$ such that $\gog'_{\eue}$ is $G$-conjugate to a proper subgroup of $\gog_{\psi(\eue)}$. 
    \end{itemize}
    \end{itemize}
\end{lem}

\begin{proof}
    By Proposition~\ref{prop: AccFact}, there is a vertex-group $\gog_v$ of $(\gog,\Lambda)$ which is inaccessible. By Fact~\ref{fact:one end}, $\gog_v$ splits non-trivially over a compact open subgroup~$C$, say either $\gog_v=A\ast_{C}B$ or $\gog_v=A\ast_C^t$. In other words, $\gog_v$ is isomorphic to the fundamental group of either a $1$-segment of groups (if $\gog_v=A\ast_CB$):
\begin{equation*}
        \xymatrix{\stackrel{A}{\bullet}_v\ar@{-}[rr]^{C}_{\eue}&&\stackrel{B}{\bullet}_w}
    \end{equation*}
     or a $1$-loop of groups (if $\gog_v=A\ast_C^t$):
    \begin{equation*}
    \xymatrix{\stackrel{A}{\bullet}_v\ar@{-}^{C}_{\eue}@(ur,dr)}
    \end{equation*}
    Without loss of generality, assume that $A$ is inaccessible.
For every $\euf\in \euE(\Lambda)$ with~$t_\Lambda(\euf)=v$ (here $t_\Lambda(\argu)$ is the terminus map in~$\Lambda$), assign~$\euf$ a pair~$(g_{\euf},x_{\euf})$ with $g_{\euf}\in \gog_v$ and $x_{\euf}\in\{v,w\}$ satisfy $\alpha_{\euf}(\gog_{\euf})\subseteq g_{\euf}Ag_{\euf}^{-1}$ if $x_{\euf}=v$ and $\alpha_{\euf}(\gog_{\euf})\subseteq g_{\euf}Bg_{\euf}^{-1}$ if $x_{\euf}=w$, provided $\alpha_{\euf}\colon \gog_{\euf}\hookrightarrow \gog_v$ is the monomorphism associated to~$\euf$ in~$(\gog,\Lambda)$. By the Bruhat--Tits fixed point theorem, the pair $(g_{\euf},x_{\euf})$ exists. 

Assume first that $\gog_v=A\ast_CB$. Let $(\euH,\Gamma)$ be the graph of t.d.l.c.~groups obtained from~$(\gog,\Lambda)$ by expanding $\{v\}$ with $\xymatrix{\stackrel{v}{\bullet}\ar@{-}[r]^{\eue}&\stackrel{w}{\bullet}}$ with respect to $\{(g_{\euf},x_{\euf})\}_{\euf\in \euE(\Lambda),\,t(\euf)=v}$ and such that $G\simeq \pi_1(\euH,\Gamma)$ (cf.~the construction before Lemma~\ref{lem:exp}). Note that every edge~$\euf$ of~$\Lambda$ ending in~$v$ will terminate, as an edge of~$\Gamma$, either in~$v$ or in~$w$ according to $x_{\euf}$ .
Note also that $\euH_v=A$, $\euH_w=B$ and $\euH_\eue=C$. 

Analogously, if $\gog_v=A\ast_C^t$, let $(\euH,\Gamma)$ be the  graph of t.d.l.c.~groups obtained from~$(\gog,\Lambda)$  by expanding $\{v\}$ with $\xymatrix{\stackrel{v}{\bullet}\ar@{-}^{\eue}@(ur,dr)}$ with respect to \linebreak $\{(g_\euf, v)\}_{\euf\in\euE(\Lambda),\,t(\euf)=v}$ and such that $G\simeq\pi_1(\euH,\Gamma)$. 
Note that $\euH_v=A$ and~$\euH_\eue=C$.

In both cases, one has $|\euE(\Gamma)|=|\euE(\Lambda)|+2$ and the vertex-group $\euH_v=A$ is (by assumption) an inaccessible proper subgroup of $\gog_v$.

If $(\euH,\Gamma)$ is proper, for $(\gog',\Lambda'):=(\euH,\Gamma)$ one easily checks that (A) holds.
Assume that $(\euH,\Gamma)$ is not proper. Since $(\gog,\Lambda)$ is proper and $\gog_v$ is non-compact, necessarily $\gog_v=A\ast_C B$ and there is a path $(\eue,\euf)$ in $\Gamma$ with $\euf\neq \bar\eue,$ because $C$ is properly contained in $B$, and $w=o(\euf)\neq t(\euf)=w'$ such that $\euH_w=\euH_{o(\euf)}=\euH_\euf\subsetneq\euH_{t(\euf)}=\euH_{w'}$.
        \begin{align*}
\xymatrix{
\stackrel{\euH_{v}}{\bullet}\ar@{-}[rr]_-{\euH_{\eue}}&&\stackrel{\euH_{w}=\euH_{\euf}}{\bullet}\ar@{-}[rr]_-{\euH_{\euf}}&&\stackrel{\euH_{t(\euf)}=\euH_{w'}}{\bullet}}
\end{align*}
Let $(\gog',\Lambda')$ be the finite graph of t.d.l.c.~groups obtained from $(\euH,\Gamma)$ by contracting the edge $\euf$, i.e.,
$\euV(\Lambda'):=\euV(\Gamma)\setminus\{w\}=\euV(\Lambda)$, $\euE(\Lambda'):=\euE(\Gamma)\setminus\{\euf,\overline{\euf}\}$, and the origin and the terminus of $\eue$ 
 in $\Lambda'$ are the vertices $v$ and $w'$, respectively. Notice that all edges of $\Lambda$ with origin $w$ that survive in $\Lambda'$ have now origin in $w'$ after the relabelling.
 $$
 \begin{array}{lcl}
    {\xymatrix@C=1em{\bullet\ar@/_2.0pc/@{--}[dd]& &\bullet_{w'}&&\bullet\ar@/^2.0pc/@{--}[dd]\\
 &\bullet_v\ar@{-}[lu]\ar@{-}[ld]\ar@{-}[rr]_{\eue}&&\bullet_w\ar@{-}[lu]^{\euf}\ar@{-}[ru]^{\mathbf{a}}\ar@{-}[rd]_{\mathbf{b}}&\\
 \bullet&&&&\bullet}} & \quad\to\quad & {\xymatrix@C=1em{\bullet\ar@/_2.0pc/@{--}[dd]& &&&\bullet\ar@/^2.0pc/@{--}[dd]\\
 &\bullet_v\ar@{-}[lu]\ar@{-}[ld]\ar@{-}[rr]_{\eue}&&\bullet_{w'}\ar@{-}[ru]^{\mathbf{a}}\ar@{-}[rd]_{\mathbf{b}}&\\
 \bullet&&&&\bullet}}
 \end{array}
 $$
 Moreover, $\gog'_{x}=\euH_x$ for all $x\in \euV(\Lambda')\sqcup \euE(\Lambda')$. Note that $(\gog',\Lambda')$ is proper. Indeed, since $(\gog,\Lambda)$ is proper, the only possibility for $(\gog',\Lambda')$ not being proper is that there exists $\mathbf{a}\in \euE(\Lambda')\setminus\{\overline{\eue}\}$ with $o(\mathbf{a})={w'}$ such that $\gog'_{w'}=\gog'_{\mathbf{a}}$. However, as $\mathbf{a}$ was an edge in $\Lambda$ with origin $w$, one has that $\euH_{\mathbf{a}}\subset\euH_{w}$. Moreover, the inclusion must be strict because $(\gog,\Lambda)$ is proper. Therefore, $\gog'_{\mathbf{a}}=\euH_{\mathbf{a}}\subsetneq \euH_{w}=\euH_\euf\subsetneq\euH_{t(\euf)}=\gog'_{w'}$. 
 Define $\varphi:=\iid_{\euV(\Lambda)}$ and  
$$\psi\colon\euE(\Lambda')\to\euE(\Lambda),\quad\psi(\euf'):= \begin{cases}
     \euf',\quad\text{if $\euf'\in\euE(\Lambda')\setminus\{\eue,\overline{\eue}\}$,}\\
     \euf,\quad\text{if $\euf'=\eue,$}\\
     \bar{\euf},\quad\text{if $\euf'=\bar{\eue}$.}
 \end{cases}$$
It is now easy to check that the pair $((\euH,\Gamma),(\gog',\Lambda'))$ satisfies condition (B) of the claim.
\end{proof}
\begin{prop}\label{prop:inacc}
    Let $G$ be a compactly generated inaccessible t.d.l.c.~group. Then there exists an infinite sequence $\{(\gog_n,\Lambda_n)\}_{n\geq 1}$ of finite proper graphs of t.d.l.c.~groups having compact edge-groups satisfying $G\simeq \pi_1(\gog_n,\Lambda_n)$ for every $n\geq 1$. Moreover, exactly one of the following conditions holds:
    \begin{itemize}
        \item[(a)] $|\euE(\Lambda_n)|=2n$ for every $n\geq 1$;
        \item[(b)] for every $n\geq 2$, there are bijections $\varphi_n\colon \euV(\Lambda_n)\to\euV(\Lambda_1)$ and $\psi_n\colon \euE(\Lambda_n)\to \euE(\Lambda_1)$, with $\psi_n$ commuting with the edge-inversion maps, and there is $\eue_n\in \euE(\Lambda_n)$ with $v_n\ce o(\eue_n)$ such that:
        \begin{itemize}
            \item[(i)] $(\gog_n)_{v_n}$ is inaccessible;
            \item[(ii)] for every $w\in \euV(\Lambda_n)\setminus\{v_n\}$, the group $(\gog_n)_w$ is $G$-conjugate to $(\gog_1)_{\varphi_n(w)}$ and, for every $\euf\in \euE(\Lambda_n)$, the group $(\gog_n)_{\euf}$ is $G$-conjugate to a subgroup of $(\gog_1)_{\psi_n(\euf)}$.
        \end{itemize}
        Moreover, for every $n\geq 2$, one has $\varphi_n(v_n)=\varphi_{n+1}(v_{n+1})$ and $(\gog_{n+1})_{\eue_{n+1}}$ is $G$-conjugate to a proper subgroup of $(\gog_n)_{\eue_n}$.
    \end{itemize}
\end{prop}
\begin{proof}
  By Fact~\ref{fact:one end}, there is a finite proper graph of t.d.l.c.~groups $(\gog_1,\Lambda_1)$ with compact edge-groups such that $|\euE(\Lambda_1)|=2$ and $G\simeq\pi_1(\gog_1,\Lambda_1)$.
  Applying Lemma~\ref{lem:inacc} and proceeding by induction, one constructs two infinite sequences of finite graphs of t.d.l.c.~groups $\{(\gog_n,\Lambda_n)\}_{n\geq 1}$ and $\{(\euH_n,\Gamma_n)\}_{n\geq 1}$ with compact edge-groups satisfying, for every $n\geq 1$, what follows: $(\gog_n,\Lambda_n)$ is proper, 
 $|\euE(\Gamma_{n+1})|=|\euE(\Lambda_n)|+2$, $|\euE(\Lambda_{n+1})|\in\{|\euE(\Lambda_n)|, |\euE(\Lambda_n)|+2\}$, $G\simeq\pi_1(\gog_n,\Lambda_n)\simeq \pi_1(\euH_n,\Gamma_n)$, and the pair $((\euH_n,\Gamma_n), (\gog_{n+1},\Lambda_{n+1}))$ meets either condition~(A) or condition~(B) of Lemma~\ref{lem:inacc}. 
  If there are infinitely many $n\geq 1$ for which condition~(A) is satisfied, there is a subsequence of $\{(\gog_n,\Lambda_n)\}_{n\geq 1}$ satisfying~(a). 
  
  Assume instead that there is $n_0\geq 1$ such that condition (B) holds for every $n\geq n_0$. Let $v\in \euV(\Lambda_{n_0})$ such that $(\gog_{n_0})_{v}$ is inaccessible (cf.~Proposition~\ref{prop: AccFact}).
  By Lemma~\ref{lem:inacc}, for all $n\geq n_0$ there are bijections $\tilde{\varphi}_{n+1}\colon\euV(\Lambda_{n+1})\to \euV(\Lambda_n)$ and $\tilde{\psi}_{n+1}\colon \euE(\Lambda_{n+1})\to \euE(\Lambda_n)$, with $\tilde{\psi}_{n+1}$ commuting with the edge-inversion maps, and there is $v_{n+1}\in \euV(\Lambda_{n+1})$ such that $\tilde{\varphi}_{n_0+1}(v_{n_0+1})=v$ and $\tilde{\varphi}_{n+1}(v_{n+1})=v_n$ for every $n\geq n_0+1$. Moreover, for every $n\geq n_0$, the following holds:
    \begin{itemize}
        \item[(b1)] $(\gog_{n+1})_{v_{n+1}}$ is inaccessible;
        \item[(b2)] for every $w\in \euV(\Lambda_{n+1})\setminus\{v_{n+1}\}$, the group $(\gog_{n+1})_w$ is $G$-conjugate to $(\gog_n)_{\tilde{\varphi}_{n+1}(w)}$ and, for every $\euf\in \euE(\Lambda_{n+1})$, the group $(\gog_{n+1})_\euf$ is $G$-conjugate to a subgroup of $(\gog_n)_{\tilde{\psi}_{n+1}(\euf)}$. In particular, there is $\eue\in \euE(\Lambda_{n+1})$ with $o(\eue)=v_{n+1}$ such that $(\gog_{n+1})_{\eue}$ is $G$-conjugate to a proper subgroup of $(\gog_n)_{\tilde{\psi}_{n+1}(\eue)}$. 
    \end{itemize}
  Up to replacing $\{(\gog_n,\Lambda_n)\}_{n\geq n_0}$ with a subsequence, one may assume that for every $n\geq n_0$ there is $\eue_n\in \euE(\Lambda_n)$ with $o(\eue_n)=v_n$ such that $(\gog_{n+1})_{\eue_{n+1}}$ is $G$-conjugate to a proper subgroup of $(\gog_n)_{\eue_n}$. This is indeed a consequence of (b2) and the fact that $\Lambda_{n_0}$ has finitely many edges. 
  Defining $\varphi_n\ce\tilde{\varphi}_n\circ\ldots\circ \tilde{\varphi}_{n_0+1}$ and $\psi_n\ce\tilde{\psi}_n\circ\ldots \circ\tilde{\psi}_{n_0+1}$ for every $n>n_0$, one concludes that $\{(\gog_n,\Lambda_n)\}_{n\geq n_0}$ satisfies condition (b) of the claim. 
\end{proof}
Combining Proposition~\ref{prop:inacc} and Proposition~\ref{prop:deg}, one deduces the following.
\begin{cor}\label{cor:inaccuni}
    Let $G$ be a compactly generated unimodular t.d.l.c.~group with $\ccd_\Q(G)=1$. If $G$ is inaccessible, then there is an infinite sequence $\{(\gog_n,\Lambda_n)\}_{n\geq 1}$ of finite proper graphs of t.d.l.c.~groups having compact edge-groups, with $|\euE(\Lambda_n)|=2n$ and $G\simeq \pi_1(\gog_n,\Lambda_n)$ for every $n\geq 1$.
\end{cor}
\subsection{$\CO$-bounded t.d.l.c.~groups}\label{sus:CObound}
Let $\mu$ be a left-invariant Haar measure on a t.d.l.c.~group  $G$. The group $G$ is said to be \emph{$\mathcal{CO}$-bounded} if 
\begin{equation}
\nor G\nor_\mu\ce\sup \{\mu(U): U\in \mathcal{CO}(G)\}<\infty.
\end{equation}
For instance, a discrete group is $\CO$-bounded if there is a bound on the order of its finite subgroups.
Being $\CO$-bounded does not depend on the choice of the Haar measure. Indeed, if $\lambda$ is another left-invariant Haar measure on $G$, there is $c>0$ such that $\lambda=c\cdot\mu$. 
\begin{prop}\label{prop:COunimod}
Every $\CO$-bounded t.d.l.c.~group is unimodular.
\end{prop}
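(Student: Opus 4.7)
The plan is to extract a bound on the modular function $\Delta\colon G\to\R_{>0}$ from the $\CO$-bound, and then exploit the fact that a multiplicative homomorphism whose image is bounded away from $0$ and $\infty$ must be trivial.

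First, I would recall the standard relation between the left Haar measure $\mu$ and conjugation: for any measurable set $A\subseteq G$ and any $g\in G$, one has
\begin{equation*}
\mu(gAg^{-1})=\Delta(g)^{-1}\mu(A),
\end{equation*}
which follows from left invariance combined with the defining identity $\mu(Ag)=\Delta(g)\mu(A)$. The key observation is that if $U\in\CO(G)$, then $gUg^{-1}$ is again a compact open subgroup of $G$, so the $\CO$-bound $\|G\|_\mu<+\infty$ applies to it as well.

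Next I would fix any $U\in\CO(G)$ (such a $U$ exists by van Dantzig's theorem) and combine the displayed relation with the $\CO$-bound: for every $g\in G$,
\begin{equation*}
\Delta(g)^{-1}\mu(U)=\mu(gUg^{-1})\leq \|G\|_\mu,
\end{equation*}
which yields the uniform positive lower bound $\Delta(g)\geq \mu(U)/\|G\|_\mu$ for all $g\in G$.

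To conclude, I would use the homomorphism property $\Delta(g^n)=\Delta(g)^n$. If there existed some $g\in G$ with $\Delta(g)\neq 1$, then either $\Delta(g)>1$, giving $\Delta(g^{-n})=\Delta(g)^{-n}\to 0$ as $n\to\infty$ and contradicting the lower bound above, or $\Delta(g)<1$, giving $\Delta(g^n)\to 0$ which likewise contradicts the uniform lower bound applied to the element $g^n$. Thus $\Delta\equiv 1$ and $G$ is unimodular. There is no real obstacle here; the only mild point to check is the conjugation formula, and this is a routine consequence of the definition of $\Delta$ together with left invariance.
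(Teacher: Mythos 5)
Your proof is correct and follows essentially the same route as the paper: both extract a uniform bound on $\Delta(g)=\mu(g^{-1}Ug)$ (equivalently $\mu(gUg^{-1})$) for a fixed $U\in\CO(G)$ from the $\CO$-bound, and then conclude that a bounded subgroup of $\R_{>0}$ is trivial. The only difference is that you spell out this last step via the power argument $\Delta(g^n)=\Delta(g)^n$, which the paper leaves implicit.
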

\begin{proof}
Let $\Delta\colon G\to\Q_{>0}$ denote the modular function of $G$ and, for a given $U\in \mathcal{CO}(G)$, let $\mu_U$ be the left-invariant Haar measure on $G$ such that $\mu_U(U)=1$. Observe that $\Delta(g)=\mu_U(g^{-1}Ug)\leq \nor G\nor_{\mu_U}$, for every $g\in G$.
    Since $\Q_{>0}$ has no non-trivial bounded subgroups, $G$ is unimodular.
\end{proof}
\begin{prop}
\label{prop: COnoeth}
Let $G$ be a $\CO$-bounded t.d.l.c.~group. 
 Every strictly increasing sequence of compact open subgroups of $G$ is finite. 
 \end{prop}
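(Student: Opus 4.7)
The plan is to argue by contradiction: suppose there is an infinite strictly increasing sequence
\[
U_1 < U_2 < U_3 < \cdots
\]
of compact open subgroups of $G$, and derive that $\Vert G\Vert_\mu = +\infty$, contradicting $\CO$-boundedness.

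First I would observe that for each $i\geq 1$, the subgroup $U_i$ is open in $G$ and therefore open in the compact group $U_{i+1}$. Since the cosets of $U_i$ in $U_{i+1}$ form a disjoint open cover of the compact space $U_{i+1}$, the index $[U_{i+1}:U_i]$ is finite; since the inclusion is strict, $[U_{i+1}:U_i]\geq 2$. By the standard decomposition of $U_{i+1}$ into a finite disjoint union of left cosets of $U_i$ and left-invariance of $\mu$, one obtains
\[
\mu(U_{i+1}) = [U_{i+1}:U_i]\cdot \mu(U_i) \geq 2\cdot\mu(U_i).
\]
Iterating, $\mu(U_n)\geq 2^{n-1}\mu(U_1)$ for every $n\geq 1$. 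Since $\mu(U_1)>0$ (every non-empty open set has positive Haar measure), the sequence $\mu(U_n)$ tends to $+\infty$.

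On the other hand, by the $\CO$-boundedness hypothesis, $\mu(U_n)\leq \Vert G\Vert_\mu<+\infty$ for every $n$, a contradiction. Hence no such infinite strictly increasing chain can exist. Because the argument is purely measure-theoretic and relies only on the multiplicativity of Haar measure under open finite-index inclusions plus the strict inequalities $[U_{i+1}:U_i]\geq 2$, there is no real obstacle; the only point worth spelling out is that passing between different left-invariant Haar measures only rescales by a positive constant, so the conclusion does not depend on the particular $\mu$ used in the definition of $\Vert G\Vert_\mu$.
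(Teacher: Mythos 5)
Your proof is correct and follows essentially the same route as the paper: both arguments derive a contradiction from the fact that the Haar measures $\mu(U_n)$ along an infinite strictly increasing chain grow without bound while $\CO$-boundedness caps them. The only cosmetic difference is that you obtain unboundedness from the index inequality $[U_{i+1}:U_i]\geq 2$ (geometric growth), whereas the paper normalizes $\mu$ at $U_1$ and observes that the values form a strictly increasing, bounded sequence of positive integers, which must be finite.
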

    \begin{proof}
     Every strictly ascending chain of compact open subgroups $U_1<U_2<\cdots<U_n<\cdots$ yields the following bounded (and hence finite) strictly increasing sequence of positive integers
$$1<\mu_{U_1}(U_2)=|U_2:U_1|<\cdots<\mu_{U_1}(U_n)=|U_n:U_1|<\cdots.$$
     Indeed, for every $i\geq 2$, the set $ U_i\setminus U_{i-1}$ is a non-empty open set in $G$ and, therefore,  $0<\mu_{U_1}(U_i\setminus U_{i-1})=\mu_{U_1}(U_i)-\mu_{U_1}(U_{i-1}).$
\end{proof}
\begin{ex}\label{ex: prelThm}
 Let $G$ be a t.d.l.c.~group.
\begin{enumerate}
  \item[(a)] If $G$ is unimodular and admits an $\underbar{E}_{\CO}(G)$-space with finitely many orbits on the $0$-skeleton, then $G$ is $\CO$-bounded. Indeed, every $\caO\in\CO(G)$ stabilises a point of $X$ and henceforth it is subconjugated to a vertex-stabiliser (cf.~\cite[\S6.6]{cw:qrat}). By unimodularity and since there are finitely many conjugacy classes of stabilisers of vertices, $G$ is $\CO$-bounded. 
    
    For instance, if $G$ is unimodular and $G\simeq \pi_1(\gog,\Lambda)$ for some graph $(\gog,\Lambda)$ of profinite groups with $|\euV(\Lambda)|<\infty$, then $G$ is $\CO$-bounded. Indeed, the universal tree of $(\gog,\Lambda)$ is a $1$-dimensional model for $\underbar{E}_\CO(G)$ (cf.~\cite[Theorem~4.9]{lu}) and has finitely many orbits on the $0$-skeleton.
    In particular, if $\ccd_\Q(G)\leq 1$ and $G$ is either finitely generated and discrete or {unimodular and} compactly presented, then $G$ is $\CO$-bounded (cf.~Theorem~\hyperref[thmA]{A}~and~Theorem~\hyperref[thmC]{C}).
     
   \item[(b)] One can introduce the Bredon cohomology for $G$ with respect to the family of all compact open subgroups (cf.~\cite[p.~14ff]{mv}). A natural generalisation of (a) is given by unimodular t.d.l.c.~groups which are of {\it type Bredon $\FP_0$}, i.e., there is a finite subset $\mathcal A$ of $\CO(G)$ such that every $\caO\in\CO(G)$ is subconjugate to some element of $\mathcal A$ (cf.~\cite[\S3]{kmpn}).
\end{enumerate}
\end{ex}
\begin{thm}\label{thm: firstRes}
Let $G$ be a compactly generated t.d.l.c.~group satisfying $\ccd_\Q(G)=1$. If $G$ is $\CO$-bounded, then $G$ is accessible.
\end{thm}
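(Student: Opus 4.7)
\emph{Plan.} Suppose for contradiction that $G$ is not accessible. Since $G$ is $\CO$-bounded, it is unimodular by Proposition~\ref{prop:COunimod}. The finite-dimensionality of $\A(G)$ (Lemma~\ref{lem: A(G)Hom+FG}) together with Proposition~\ref{prop: dimA(G)} implies that, at any stage of the process of successive decompositions, the first Betti number of the associated graph is bounded by $\dim_\R \A(G)$, so only finitely many $\beta$-decompositions can occur in total. Because $G$ is not accessible the process is infinite, hence there is a factor $H$ arising after all $\beta$-decompositions have been performed that is itself non-accessible and in which every subsequent decomposition is of type $\alpha$. Being an open subgroup of $G$, this $H$ inherits compact generation (\cite[Proposition~4.1]{IC:cone}), $\CO$-boundedness, and $\ccd_\Q\leq 1$ (\cite[Proposition~3.7(c)]{cw:qrat}). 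Replacing $G$ by $H$, I may assume that $G$ itself is totally $\alpha$-not-$\beta$ decomposable.

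Proposition~\ref{prop: DecPropStar} then produces an infinite sequence $\{(\gog_n,\euT_n)\}_{n\geq 1}$ of finite proper trees of t.d.l.c.\ groups with profinite edge-groups, $|\euE(\euT_n)|$ strictly increasing and $G\simeq\pi_1(\gog_n,\euT_n)$. The meta-tree of successive amalgamated decompositions of $G$ is infinite and binary, so K\"onig's lemma yields an infinite descending sequence $G = G_0\supsetneq G_1\supsetneq G_2\supsetneq\cdots$ of non-compact open subgroups with $G_i = G_{i+1}*_{V_i}H_i$ and $V_i\in\CO(G)$ strictly contained in both $G_{i+1}$ and $H_i$.

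The last and decisive step is to extract from this path the telescoping configuration of Corollary~\ref{cor: targ1}: a strictly descending chain of compact opens $\caO_0>\caO_1>\cdots$ together with decompositions $G = \caO_0 *_{\caO_n}G_n$ and $G_n = \caO_n*_{\caO_{n+1}}G_{n+1}$. The identity $\tilde{\chi}_{G_{i+1}} - \tilde{\chi}_{G_i} = \mu(V_i) - \tilde{\chi}_{H_i}$ (Proposition~\ref{prop: chiAH}(a)), combined with Theorem~G (giving $\tilde{\chi}_{G_j},\tilde{\chi}_{H_j}\leq 0$ whenever these groups are non-compact), Corollary~\ref{cor: 1stEstimateChar}, and the measure bound $\mu(V_i)\leq \Vert G\Vert_\mu$ supplied by $\CO$-boundedness, allows one to pass to a subsequence along which the ``other side'' $H_i$ is compact. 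A tree-contraction argument in the Bass--Serre setting then merges consecutive amalgams into the required telescoping pattern, the strict descent of the chain of edge-groups being enforced by the non-triviality of each splitting together with Proposition~\ref{prop: COnoeth}. Applying Corollary~\ref{cor: targ1} then yields $\ccd_\Q(G)\neq 1$, contradicting the hypothesis.

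The main obstacle is the very last reconfiguration: massaging the generic descending path of factors into the precise telescoping configuration demanded by Corollary~\ref{cor: targ1}. This is where $\CO$-boundedness plays its decisive role through Proposition~\ref{prop: COnoeth}, forcing the strictness of the descending chain of compact opens and ruling out the ``shrinking-measure'' escape route that would otherwise be compatible with all the Euler-characteristic inequalities.
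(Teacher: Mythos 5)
Your opening reduction (to the totally $\alpha$-not-$\beta$ decomposable case via the finite-dimensionality of $\A(G)$) and the appeal to Proposition~\ref{prop: DecPropStar} match the paper. The gap is in your ``decisive step''. Corollary~\ref{cor: targ1} demands a very rigid configuration: a \emph{single} compact open subgroup $\caO$ with $G=\caO\ast_{\caO_n}G_n$ for \emph{every} $n$, the edge-groups $\caO_n$ forming a strictly descending chain inside $\caO$. A general infinite decomposition process does not produce this. First, along the K\"onig path there is no reason that the complementary factors $H_i$ become compact, even after passing to a subsequence; the Euler-characteristic inequalities you list are all consistent with every $H_i$ being non-compact with $\tilde\chi_{H_i}\leq 0$. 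Second, and more fatally, even if all $H_i$ were compact, the tree-contraction you invoke would have to collapse the subtree carrying $H_0,\dots,H_{n-1}$ (amalgamated over the $V_i$) to a single compact vertex $\caO$ independent of $n$; but an amalgam of compact groups over proper open subgroups is never compact (already $\Z/2\ast\Z/2$ fails), so no such fixed $\caO$ with $G=\caO\ast_{\caO_n}G_n$ emerges. In the paper, Corollary~\ref{cor: targ1} is used only \emph{inside} Proposition~\ref{prop: DecPropStar}, to exclude the degenerate telescoping in which the edge count of $\euT_n$ fails to increase strictly; it is not the engine of the main contradiction.

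The actual contradiction in the paper is a global edge count: orienting the tree $\euT_n$ away from a non-compact vertex $\bar v_n$ gives a bijection $\euE_n^+\to\euV(\euT_n)\setminus\{\bar v_n\}$, and properness together with Theorem~\ref{thm:chi} and Corollary~\ref{cor: 1stEstimateChar} yields $\tilde\chi_{\gog_{\iota(\eue)}}-\mu_{\gog_\eue}\leq-\tfrac12\mu_{\gog_\eue}$ for every oriented edge. This is where $\CO$-boundedness enters: $\mu(\gog_\eue)\leq\Vert G\Vert_\mu$ gives the uniform bound $\mu_{\gog_\eue}\geq\Vert G\Vert_\mu^{-1}\cdot\mu$, hence $\tilde\chi_G\leq-\tfrac{1}{4\Vert G\Vert_\mu}|\euE(\euT_n)|\cdot\mu$, which tends to $-\infty$ as $|\euE(\euT_n)|\to\infty$ and is impossible for a fixed $\tilde\chi_G$. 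Your proposal never performs this count, and without it (or the unjustified reduction to Corollary~\ref{cor: targ1}) the argument does not close.
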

\begin{proof}
Suppose that $G$ is inaccessible. 
Let $\{(\gog_n,\Lambda_n)\}_{n\geq 1}$ be the sequence produced by
Corollary~\ref{cor:inaccuni}. For every $n\geq 1$ fix an edge orientation $\euE_n^+\subseteq\euE(\Lambda_n)$, a maximal subtree $\euT_n$ of $\Lambda_n$ and $v_n\in\euV(\Lambda_n)$ such that $(\gog_n)_{v_n}$ is non-compact (such a vertex exists because $G$ is inaccessible).
By \cite[\S I.2, Proposition~12]{ser:trees}, there exists a bijection $\euE_n^+\cap\euE(\euT_n)\longrightarrow \euV(\euT_n)\setminus \{v_n\}=\euV(\Lambda_n)\setminus \{v_n\}$. Replacing $\eue\in\euE_n^+\cap \euE(\euT_n)$ by $\bar{\eue}$ if necessary, we may assume that the bijection is given by the origin map $\eue\mapsto o(\eue)$. By Proposition~\ref{prop: chiGrOfGr}, for every $n\geq 1$ one has 
\begin{equation}\label{eq:chiG}
\begin{split}
\tilde\chi_G & =  \sum_{v\in \euV(\Lambda_n)}\tilde\chi_{(\gog_n)_v}-\sum_{\eue\in \euE_n^+}\mu_{(\gog_n)_\eue}\\
&=\tilde\chi_{(\gog_n)_{v_n}}+\sum_{\eue\in \euE_n^+\cap \euE(\euT_n)} (\tilde\chi_{(\gog_n)_{o(\eue)}}-\mu_{(\gog_n)_\eue})-\sum_{\eue\in \euE_n^+\setminus \euE(\euT_n)} \mu_{(\gog_n)_\eue}\\
    &\leq \sum_{\eue\in \euE_n^+\cap \euE(\euT_n)} (\tilde\chi_{(\gog_n)_{o(\eue)}}-\mu_{(\gog_n)_\eue})-\sum_{\eue\in \euE_n^+\setminus \euE(\euT_n)} \mu_{(\gog_n)_\eue},
    \end{split}
\end{equation}
where the last inequality holds since $\tilde\chi_{(\gog_n)_{v_n}}\leq 0$ (cf.~Theorem~\ref{thm:chi}). Let $\mu$ be an arbitrary left-invariant Haar measure on $G$.
By Corollary~\ref{cor: 1stEstimateChar}, for every $\eue\in \euE_n^+\cap \euE(\euT_n)$ one has
\begin{equation}\label{eq:claim}
    \tilde\chi_{(\gog_n)_{o(\eue)}}-\mu_{(\gog_n)_\eue}\leq -\frac{1}{2}\mu_{(\gog_n)_\eue}=-\frac{1}{2\mu((\gog_n)_{\eue})}\mu\leq-\frac{1}{2\nor G\nor_\mu}\mu.
\end{equation}
Moreover, for every $\eue\in \euE_n^+\setminus \euE(\euT_n)$ one observes that
\begin{equation}\label{eq:claim2}
    \mu_{(\gog_n)_\eue}=\frac{1}{\mu((\gog_n)_\eue)}\mu\geq \frac{1}{\nor G\nor_\mu}\mu>\frac{1}{2\nor G\nor_\mu}\mu.
\end{equation}
 Hence, from~\eqref{eq:chiG}, \eqref{eq:claim} and \eqref{eq:claim2} one concludes that
\begin{alignat*}{2}
    \tilde\chi_G &\leq \Bigg(-\frac{|\euE_n^+\cap \euE(\euT_n)|}{2\nor G\nor_\mu}-\frac{|\euE_n^+\setminus \euE(\euT_n)|}{2\nor G\nor_\mu}\Bigg)\cdot\mu\\
       & =-\frac{|\euE_n^+|}{2\nor G\nor_\mu}\mu=-\frac{|\euE(\Lambda_n)|}{4\nor G\nor_\mu}\mu\quad\text{for every}\ n\geq 1,
\end{alignat*}
a contradiction. Therefore, $G$ is accessible. 
\end{proof}
\begin{rem}
  A potential generalisation of the Dunwoody's foldings (cf.~\cite[Theorem~2.3]{dun:fold}) in the context of t.d.l.c.~groups might lead to an alternative proof of Theorem~\ref{thm: firstRes}. Nevertheless, in order to prove that a t.d.l.c.~group is accessible, one would still need to exclude t.d.l.c.~groups admitting an infinite sequence of decompositions as considered in Proposition~\ref{prop:inacc}(b).
\end{rem}
Finally, Theorem \ref{thm: firstRes} yields the following characterisation.
\begin{cor}\label{cor:acc}
For a compactly generated t.d.l.c.~group $G$, the following are equivalent:
\begin{enumerate}
    \item[(a)] $G$ is $\CO$-bounded and $\ccd_\Q(G)\leq 1$;
    \item[(b)] $G$ is unimodular and $G\simeq \pi_1(\gog,\Lambda)$, for some finite graph $(\gog, \Lambda)$ of profinite groups;
    \item[(c)] $G$ is unimodular and  some (and hence every) Cayley--Abels graph of $G$ is quasi-isometric to a tree;
    \item[(d)] $G$ has a finitely generated free subgroup which is cocompact and discrete.
\end{enumerate}
\end{cor}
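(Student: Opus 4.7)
The plan is to establish the two equivalences (a)~$\Leftrightarrow$~(b) and (b)~$\Leftrightarrow$~(c)~$\Leftrightarrow$~(d) in turn, drawing mainly on Theorem~\ref{thm: firstRes}, Bass--Serre theory, and the Milnor--Schwarz lemma.

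For (a)~$\Rightarrow$~(b), $\CO$-boundedness forces unimodularity by Proposition~\ref{prop:COunimod}. If $\ccd_\Q(G)=0$ then $G$ is profinite by \cite[Proposition~3.7(a)]{cw:qrat} and is trivially the fundamental group of the graph consisting of a single vertex labelled by $G$. If $\ccd_\Q(G)=1$, Theorem~\ref{thm: firstRes} provides accessibility, and Fact~\ref{fact:one end} turns accessibility into an isomorphism $G\simeq\pi_1(\gog,\Lambda)$ for a finite graph of profinite groups. The converse (b)~$\Rightarrow$~(a) is already contained in the paper: Example~\ref{ex: prelThm}(a) gives $\CO$-boundedness, and Fact~\ref{fsct:vice} gives $\ccd_\Q(G)\leq 1$.

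For (b)~$\Rightarrow$~(c), the Bass--Serre universal tree $\euT$ of the finite graph of profinite groups $(\gog,\Lambda)$ is locally finite (every edge group has finite index in each adjacent vertex group, since all are profinite), and $G$ acts on $\euT$ continuously, properly, and cocompactly with compact open vertex stabilisers. The Milnor--Schwarz lemma in the t.d.l.c. setting then yields a quasi-isometry between $\euT$ and any Cayley--Abels graph of $G$. For (b)~$\Rightarrow$~(d), a choice of Schreier transversals over a spanning tree of $\Lambda$ (the standard tree-lattice construction in the sense of Bass--Lubotzky) produces a cocompact subgroup $F\leq G$ acting freely on the Bass--Serre tree; since any finitely generated group acting freely on a tree is free, $F$ is a finitely generated free, discrete, cocompact subgroup of $G$.

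The implication (d)~$\Rightarrow$~(c) follows immediately from the Milnor--Schwarz lemma applied to the cocompact discrete inclusion $F\hookrightarrow G$, together with the fact that the Cayley graph of $F$ with respect to a free basis is a tree. The main obstacle is (c)~$\Rightarrow$~(b), which I propose to handle by the Krön--M\"oller structure theorem for vertex-transitive locally finite graphs quasi-isometric to a tree: it produces an $\mathrm{Aut}$-invariant block-tree decomposition of any Cayley--Abels graph of $G$, hence a cocompact proper action of $G$ on a locally finite tree with compact open stabilisers, and Bass--Serre theory then yields the decomposition of $G$ as $\pi_1(\gog,\Lambda)$ for a finite graph of profinite groups. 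This closes the chain (a)~$\Leftrightarrow$~(b)~$\Leftrightarrow$~(c)~$\Leftrightarrow$~(d).
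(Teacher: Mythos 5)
Your overall route coincides with the paper's: the paper proves this corollary in one line, by combining Theorem~\ref{thm: firstRes} (for the implication from (a) to accessibility, hence to (b) via Fact~\ref{fact:one end}) with the Kr\"on--M\"oller theorem \cite[Theorem~16]{km:rough}, which is precisely the equivalence of (b), (c) and (d) that you reconstruct by hand. So you are not taking a different approach; you are unpacking the citation. Your (a)$\Leftrightarrow$(b) argument is exactly the intended one, including the converse via Example~\ref{ex: prelThm}(a) and Fact~\ref{fsct:vice}.

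Two points in your unpacking deserve attention. First, your chain (d)$\Rightarrow$(c)$\Rightarrow$(b) never produces the unimodularity asserted in (c) and (b): Milnor--Schwarz only gives the quasi-isometry to a tree. You need the standard observation that a discrete cocompact subgroup is a uniform lattice, and the existence of a lattice forces $G$ to be unimodular (the modular function is trivial on the discrete subgroup and induces a map with relatively compact image on the cocompact quotient, hence is trivial). Without this, (c) is not fully established from (d) and the cycle does not close. Second, in (b)$\Rightarrow$(d) the phrase ``a choice of Schreier transversals produces a cocompact subgroup acting freely'' understates what is needed: the existence of a free uniform lattice in $\pi_1(\gog,\Lambda)$ is the Bass--Kulkarni/Bass--Lubotzky existence theorem, and it genuinely requires the unimodularity hypothesis in (b) -- the group of Example~\ref{ex:No1} is the fundamental group of a finite loop of profinite groups acting cocompactly on its locally finite Bass--Serre tree, yet has no discrete cocompact subgroup precisely because it fails to be unimodular. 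Both issues are repairable with standard facts, but as written they are the places where your argument has gaps relative to what \cite[Theorem~16]{km:rough} actually delivers.
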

\begin{proof}
The implication (a)$\Rightarrow$(b) is Proposition~\ref{prop:COunimod}, Theorem~\ref{thm: firstRes} and Fact~\ref{fact:one end}. The converse is Fact~\ref{fsct:vice} and Example~\ref{ex: prelThm}(a). Finally, \cite[Theorem~3.28]{km:rough} yields (b)$\Leftrightarrow$(c)$\Leftrightarrow$(d).
\end{proof}
\section*{Acknowledgements}
We thank the referee for all the comments that helped us to improve our manuscript. The first and second named authors were supported by the Deutsche Forschungsgemeinschaft (DFG, German Research Foundation) – Project-ID 491392403 – TRR 358. The third author gratefully acknowledges financial support by the PRIN2022 “Group theory and its applications”.
All authors are members of the Gruppo Nazionale per le Strutture Algebriche,
Geometriche e le loro Applicazioni (GNSAGA), which is part of the Istituto
Nazionale di Alta Matematica (INdAM). The authors gratefully acknowledge also the hospitality of the Mathematisches Forschungsinstitut Oberwolfach where they met in November 2023 and had the occasion to work on this paper.

\section*{Data availability}
Data sharing is not applicable to this article as no data sets were generated or analysed during the current study.

\section*{Conflict of interest}
On behalf of all authors, the corresponding author states that there is no conflict of interest.

\bibliography{TDLC-2}
\bibliographystyle{amsplain}
\end{document}